\begin{document}
	\newcommand{\EE}{\ensuremath{\mathbb{E}}}
	\newcommand{\qq}[1]{(q;q)_{#1}}
	\newcommand{\A}{\ensuremath{\mathcal{A}}}
	\newcommand{\GT}{\ensuremath{\mathbb{GT}}}
	\newcommand{\PP}{\ensuremath{\mathbb{P}}}
	\newcommand{\R}{\ensuremath{\mathbb{R}}}
	\newcommand{\Rplus}{\ensuremath{\mathbb{R}_{+}}}
	\newcommand{\C}{\ensuremath{\mathbb{C}}}
	\newcommand{\Z}{\ensuremath{\mathbb{Z}}}
	\newcommand{\Weyl}[1]{\ensuremath{\mathbb{W}}^{#1}}
	\newcommand{\Zgzero}{\ensuremath{\mathbb{Z}_{>0}}}
	\newcommand{\Zgeqzero}{\ensuremath{\mathbb{Z}_{\geq 0}}}
	\newcommand{\Zleqzero}{\ensuremath{\mathbb{Z}_{\leq 0}}}
	\newcommand{\Q}{\ensuremath{\mathbb{Q}}}
	\newcommand{\T}{\ensuremath{\mathbb{T}}}
	\newcommand{\Y}{\ensuremath{\mathbb{Y}}}
	
	\newcommand{\Real}{\ensuremath{\mathfrak{Re}}}
	\newcommand{\Imag}{\ensuremath{\mathfrak{Im}}}
	\newcommand{\re}{\ensuremath{\mathfrak{Re}}}
	
	\newcommand{\Sym}{\ensuremath{\mathrm{Sym}}}
	
	\newcommand{\bfone}{\ensuremath{\mathbf{1}}}
	
	\newcommand{\edge}{\textrm{edge}}
	\newcommand{\dist}{\textrm{dist}}
	
	\def \Ai {{\rm Ai}}
	\def \sgn {{\rm sgn}}
	\newcommand{\var}{{\rm var}}
	
	\newcommand{\Res}[1]{\underset{{#1}}{\mathrm{Res}}}
	\newcommand{\Resfrac}[1]{\mathrm{Res}_{{#1}}}
	\newtheorem{theorem}{Theorem}[section]
	\newtheorem{partialtheorem}{Partial Theorem}[section]
	\newtheorem{conj}[theorem]{Conjecture}
	\newtheorem{lemma}[theorem]{Lemma}
	\newtheorem{proposition}[theorem]{Proposition}
	\newtheorem{corollary}[theorem]{Corollary}
	\newtheorem{claim}[theorem]{Claim}
	\newtheorem{formal}[theorem]{Critical point derivation}
	\newtheorem{experiment}[theorem]{Experimental Result}
	\newtheorem{question}{Question}
	
	\def\note#1{\textup{\textsf{\color{blue}(#1)}}}
	
	\def\noteG#1{\textup{\textsf{ \color{red!60!yellow}(#1)}}}
	
	\theoremstyle{definition}
	\newtheorem{remark}[theorem]{Remark}
	
	\theoremstyle{definition}
	\newtheorem{example}[theorem]{Example}

	\theoremstyle{definition}
	\newtheorem{definition}[theorem]{Definition}
	
	\theoremstyle{definition}
	\newtheorem{definitions}[theorem]{Definitions}

	\newcommand{\PMPasc}{\ensuremath{\mathbf{PMP_{asc}}}}
	\newcommand{\PSPasc}{\ensuremath{\mathbf{PSP_{asc}}}}
	\newcommand{\PSP}{\ensuremath{\mathbf{PSP}}}
	\newcommand{\PSM}{\ensuremath{\mathbf{PSM}}}
	\newcommand{\Plancherel}{\ensuremath{\mathrm{Plancherel}}}
	\newcommand{\Pf}{\ensuremath{\mathrm{Pf}}}
	\newcommand{\Prob}{ \ensuremath{\mathrm{Prob}}}
	\newcommand{\Geom}{\ensuremath{\mathrm{Geom}}}
	\newcommand{\sign}{\ensuremath{\mathrm{sign}}}
	\newcommand{\Conf}{\ensuremath{\mathrm{Conf}}}
	\newcommand{\I}{\ensuremath{\mathbf{i}}}
	\newcommand{\skeww}{\ensuremath{\mathrm{Skew}_2}}
	
	\newcommand{\kernel}{\mathsf{K}}
	\newcommand{\Ikernel}{\mathsf{I}}
	\newcommand{\Rkernel}{\mathsf{R}}
	
	\renewcommand{\leq}{\leqslant}
	\renewcommand{\geq}{\geqslant}
	
	\title{Facilitated exclusion process}

	\begin{abstract}
		We study the Facilitated TASEP, an interacting particle system on the one dimensional  integer lattice. We prove that starting from step initial condition, the position of the rightmost  particle has Tracy Widom GSE statistics on a cube root time scale, while the statistics in the bulk of the rarefaction fan are GUE. This uses a mapping with last-passage percolation in a half-quadrant which is exactly solvable through Pfaffian Schur processes.
		
		Our results further probe the question of how first particles fluctuate for exclusion processes with downward jump discontinuities in their limiting density profiles. Through the Facilitated TASEP and a previously studied MADM exclusion process we deduce that cube-root time fluctuations seem to be a common feature of such systems. However, the statistics which arise are shown to be model dependent (here they are GSE, whereas for the MADM exclusion process they are GUE).
		
		We also discuss a two-dimensional crossover between GUE, GOE and GSE distribution by studying the multipoint distribution of the first particles when the rate of the first one varies. In terms of half-space last passage percolation, this corresponds to last passage times close to the boundary when the size of the boundary  weights is simultaneously scaled close to the critical point.
	\end{abstract}
	
	\author[J. Baik]{Jinho Baik}
	\address{J. Baik, University of Michigan, Department of Mathematics,
		530 Church Street,
		Ann Arbor, MI 48109, USA}
	\email{baik@umich.edu}
	
	\author[G. Barraquand]{Guillaume Barraquand}
	\address{G. Barraquand,
		Columbia University,
		Department of Mathematics,
		2990 Broadway,
		New York, NY 10027, USA}
	\email{barraquand@math.columbia.edu}
	
	\author[I. Corwin]{Ivan Corwin}
	\address{I. Corwin, Columbia University,
		Department of Mathematics,
		2990 Broadway,
		New York, NY 10027, USA
	}
	\email{ivan.corwin@gmail.com}
	
	\author[T. Suidan]{Toufic Suidan}
	\address{T. Suidan}
	\email{tsuidan@gmail.com}
	
	\maketitle
	
	\setcounter{tocdepth}{1}
	\hypersetup{linktocpage}
	\tableofcontents

\section{Introduction}

Exclusion processes on $\Z$ are expected, under mild hypotheses, to belong to the KPZ universality class \cite{corwin2012kardar, halpin2015kpz}. As a consequence, one expects that if particles start densely packed from the negative integers -- the step initial condition -- the positions of particles in the bulk of the rarefaction fan will fluctuate on a cube-root time scale with GUE Tracy-Widom  statistics in the large time limit. The motivation for this paper is to consider the fluctuations of the location of the rightmost particle and probe its universality over different exclusion processes. 

In the totally asymmetric simple exclusion process (TASEP) the first particle jumps by $1$ after an exponentially distributed waiting time of mean $1$, independently of everything else. Hence its location satisfies a classical Central Limit Theorem when time goes to infinity (i.e. square-root time fluctuation with limiting Gaussian statistics). This is true for any totally asymmetric exclusion process starting from step initial condition. However, in the asymmetric simple exclusion process (ASEP), the trajectory of the first particle  is affected by the behaviour of the next particles. This results in a different limit theorem. Tracy and Widom showed \cite[Theorem 2]{tracy2009asymptotics}
that the fluctuations still occur on the $t^{1/2}$ scale, but the limiting distribution is different and depends on the strength of the asymmetry (see also \cite{lee2017distributions} where the same distribution arises for the first particle's position in a certain zero-range process). In \cite{barraquand2015q}, another partially asymmetric process called the MADM exclusion process was studied. The first particle there fluctuates  on a $t^{1/3}$ scale with Tracy-Widom GUE limit distribution, as if it was in the bulk of the rarefaction fan. An explanation for why the situation is so contrasted with ASEP (and other model where the first particle has the same limit behaviour) is that the MADM, when started from step initial condition, develops a downward jump discontinuity of its density profile around the first particle (see Figure 3 in \cite{barraquand2015q}). 

In this paper, we test the universality of the fluctuations of the first particle in the presence of a jump discontinuity -- does the $t^{1/3}$ scale and GUE statistics survive over other models? We solve this question for  the Facilitated TASEP.  Our results  show that the GUE distribution does not seem to survive in general, though we do still see the $t^{1/3}$ scale.

\subsection{The Facilitated TASEP}
The Facilitated Totally Asymmetric Simple Exclusion Process  (abbreviated FTASEP in the following) was introduced in \cite{basu2009active} and further studied in \cite{gabel2010facilitated, gabel2011cooperativity}. This is an interacting particle system on $\Z$, satisfying the exclusion rule, which means that each site is occupied by at most $1$ particle. A particle sitting at site $x$ jumps to the right by $1$ after an exponentially distributed waiting time of mean $1$, provided that the target site  (i.e. $x+1$) is empty and that the site $x-1$ is occupied. Informally, the dynamics are very similar with TASEP, with the only modification being particles need to wait to have a left neighbour (facilitation) before moving (See Figure \ref{fig:facilitatedintro}). It was introduced as a simplistic model for motion in glasses: particles move faster in less crowded areas (modelled by the exclusion rule), but need a stimulus to move (modelled by the facilitation rule). We focus here on the step initial condition: at time $0$, the particles occupy all negative sites, and the non-negative sites are empty. 

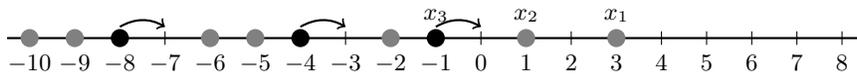
\begin{figure}
\begin{tikzpicture}[scale=0.6]
\draw[thick] (-9.5, 0) -- (9.5, 0);
\foreach \x in {-10, ..., 8} {
	\draw (\x+1, 0.15) -- (\x+1, -0.15) node[anchor=north]{\footnotesize ${\x}$};
}
\fill[gray] (-9, 0) circle(0.2);
\fill[gray] (-8, 0) circle(0.2);
\fill (-7, 0) circle(0.2);
\fill[gray] (-5, 0) circle(0.2);
\fill[gray] (-4, 0) circle(0.2);
\fill (-3, 0) circle(0.2);
\fill[gray] (-1, 0) circle(0.2);
\fill (0, 0) circle(0.2);
\draw (0, 0.5) node{\footnotesize $x_3$};
\fill[gray] (2, 0) circle(0.2);
\draw (2,0.5) node{\footnotesize $x_2$};
\fill[gray] (4, 0) circle(0.2);
\draw (4, 0.5) node{\footnotesize $x_1$};
\draw[thick, ->] (-7, 0.25) to[bend left] (-6, 0.25);
\draw[thick, ->] (-3, 0.25) to[bend left] (-2, 0.25);
\draw[thick, ->] (0, 0.25) to[bend left] (1, 0.25);
\end{tikzpicture}
\caption{The particles in black jump by $1$ at rate $1$ whereas particles in gray cannot. }
\label{fig:facilitatedintro}
\end{figure}

Since the dynamics preserve the order between particles, we can describe the configuration of the system by their ordered positions
$$ \dots <x_2 <x_1<\infty .$$
Let us collect some (physics) results from \cite{gabel2010facilitated} which studies  the hydrodynamic behaviour -- but not the fluctuations. Assume that the system is at equilibrium with an average  density of particles $\rho$. A family of translation invariant stationary measures indexed by the average density  -- conjecturally unique -- is described in the end of Section \ref{sec:defFTASEP}.  Then the flux, i.e. the average number of particles crossing a given bond per unit of time, is given by (see \cite[Eq. (3)]{gabel2010facilitated} and \eqref{eq:flux2} in the present paper)
\begin{equation}
j(\rho) = \frac{(1-\rho)(2\rho-1)}{\rho}.
\label{eq:flux}
\end{equation}
This is only valid when $\rho >1/2$. When $\rho<1/2$, \cite{gabel2010facilitated} argues  that the system eventually reaches a static state that consists of immobile single-particle clusters. One expects that the limiting density profile, informally given by
$$\rho(x, t):=\lim_{T\to\infty}\PP\big(\exists \text{ particle at site }xT\text{ at time }tT\big),$$
exists and is a weak solution subject to the entropy condition of the conservation equation
\begin{equation}
\frac{\partial}{\partial t}\rho(x,t) + \frac{\partial}{\partial x}j(\rho(x,t))=0.
\label{eq:PDEconservation}
\end{equation}
Solving this equation subject to the initial condition $ \rho(x, t)=\mathds{1}_{\lbrace x<0\rbrace}$ yields the density profile (depicted in Figure \ref{fig:densityprofile})
 $$\forall t >0, \ \rho(xt, t) = \begin{cases} 1 & \text{ if }x<-1, \\
 \frac{1}{\sqrt{2+x}}& \text{ if }-1\leqslant x\leqslant 1/4\\
 0 & \text{ if }x>1/4. \end{cases}$$
See also \cite[Eq. (5)]{gabel2010facilitated}.
\begin{figure}
\centering 
\begin{tikzpicture}[scale=3]
\draw[ ->] (-1.5, 0) -- (1.5, 0);
\draw[ ->] (0, -0.3) -- (0, 1.2);
\draw[thick] plot [smooth, domain=-1:0.25] (\x, {1/sqrt(2+ \x )});
\draw[thick] (-1.5, 1) -- (-1,1);
\draw[ultra thick] (0.25, 0) -- (1.45, 0);
\draw[dotted]   (0.25, 0) node[anchor = north] {$1/4$} -- (0.25, 2/3);
\draw (-1, 0.05) -- (-1, -0.05) node[anchor= north] {-1};
\end{tikzpicture}
\caption{Limiting density profile, i.e. graph of the function $x\mapsto  \rho(x, 1)$ .}
\label{fig:densityprofile}
\end{figure}
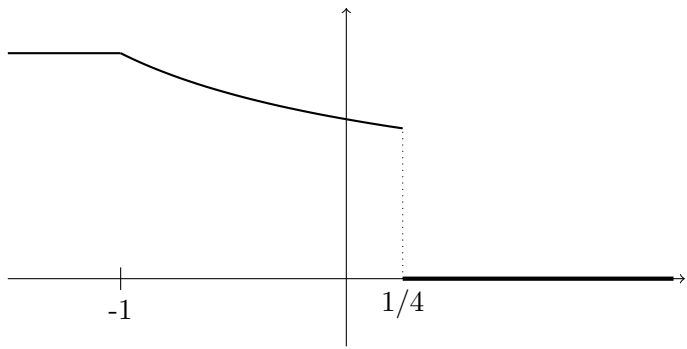
It is clear that there must be a jump discontinuity in the macroscopic density profile since in FTASEP particles can travel only in regions where the density is larger than $1/2$.

In general, the property of the flux which is responsible for the jump discontinuity is the fact that $j(\rho)/\rho$, i.e. the drift of a tagged particle, is not decreasing as a function of $\rho$. The density around the first particle will be precisely the value $\rho_{0}$ that maximizes the drift. Let us explain why. On one hand, the characteristics of PDEs such as  \eqref{eq:PDEconservation} are straight lines (\cite[§ 3.3.1.]{evans}), which means in our case that for any density $\bar{\rho}$ occurring in the rarefaction fan in the limit profile, there exists a constant $\pi(\bar{\rho})$ such that
\begin{equation}
\rho\big(\pi(\bar{\rho})t, t\big)=\bar{\rho}.
\label{eq:characteristics}
\end{equation}
$\pi(\bar{\rho})$ is the macroscopic position of particles around which the density is $\bar{\rho}$.
 Differentiating \eqref{eq:characteristics} with respect to $t$ and using the conservation equation \eqref{eq:PDEconservation} yields $\pi(\rho)= \frac{\partial j(\rho)}{\partial \rho}$. If we call $\rho_{0}$ the density around the first particle, then the macroscopic position of the first particle should be $\pi(\rho_{0})$. On the other hand, the first particle has a constant drift, which is\footnote{assuming local equilibrium -- which is not expected to be satisfied around the first particle but close to it-- the drift is given by $j(\rho)/\rho$ when the density is $\rho$.} $j(\rho_{0})/\rho_{0}$. Combining these observations yields
 $$ \frac{\partial j(\rho)}{\partial \rho}\bigg\vert_{\rho=\rho_0} =  \frac{j(\rho_{0})}{\rho_{0}}\ \ \ \text{i.e.} \ \ \ \frac{\mathrm{d}}{\mathrm{d}\rho}\frac{ j(\rho)}{ \rho}\bigg\vert_{\rho=\rho_0}=0.$$
This implies that a discontinuity of the density profile at the first particle can occur only if the drift is not strictly decreasing as a function of $\rho$, and
 it  suggests that $\rho_{0}$ is indeed the maximizer of the drift (see also \cite[Section 4]{barraquand2015q} for a different justification). In the example of the FTASEP, the maximum of
 $$ \frac{j(\rho)}{\rho} =  \frac{(1-\rho)(2\rho-1)}{\rho^2}$$
 is such that $ \rho_{0} = 2/3$ and $\pi(\rho_{0})=1/4$. In particular, this means that $x_1(t)/t$ should converge to $1/4$ when $t$ goes to infinity.

The fluctuations of $x_1(t)$ around $t/4$ are not GUE distributed as for the MADM exclusion process \cite[Theorem 1.3]{barraquand2015q}, but rather follow the GSE Tracy-Widom distribution in the large time limit.
\begin{theorem}
For FTASEP with step initial data,
$$ \PP\left(  \frac{x_{1}(t) - \frac t 4}{2^{-4/3} t^{1/3}} \geqslant x\right) \xrightarrow[t\to\infty]{} F_{\mathrm{GSE}}(- x),$$
where the GSE Tracy-Widom distribution function $F_{\mathrm{GSE}}$ is defined in Definition \ref{def:GSEdistribution}.
\label{th:FTASEPGSEintro}
\end{theorem}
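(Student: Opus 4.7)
The plan is to reduce the theorem to an asymptotic analysis of half-space last-passage percolation (LPP), as suggested by the abstract, and then to extract the GSE limit via the Pfaffian Schur process machinery.

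First I would encode the FTASEP dynamics by the jump times. Let $T(k,\ell)$ denote the time at which particle $k$ performs its $\ell$-th jump, and let $w(k,\ell)$ be the corresponding i.i.d.\ $\mathrm{Exp}(1)$ waiting times. The facilitation rule (particle $k+1$ must be adjacent on the left) together with the exclusion rule (the site in front of particle $k$ must be vacated by particle $k-1$) force
\begin{align*}
T(k,\ell) &= \max\bigl(T(k-1,\ell),\ T(k+1,\ell-1)\bigr) + w(k,\ell), \qquad k \geq 2, \\
T(1,\ell) &= \max\bigl(T(1,\ell-1),\ T(2,\ell-1)\bigr) + w(1,\ell),
\end{align*}
with $T(k,0)=0$. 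The change of variables $(m,n)=(k+\ell-1,\,\ell)$ converts the bulk recurrence into the classical two-neighbor LPP update $\tilde T(m,n)=\max(\tilde T(m-1,n),\,\tilde T(m,n-1))+w'(m,n)$ on the half-quadrant $\{m\geq n\geq 1\}$, while the $k=1$ case becomes the reflecting update on the diagonal $m=n$. Hence $T(1,N)$ is the half-space LPP time from $(1,1)$ to $(N,N)$ with i.i.d.\ $\mathrm{Exp}(1)$ weights, and $\{x_1(t)\geq N-1\}=\{T(1,N)\leq t\}$ reduces the theorem to a one-point statement for this half-space LPP.

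Second, I would invoke Pfaffian Schur processes. In the geometric-weight version of the model (of which exponential LPP is a degeneration), the joint law of last-passage times is a marginal of a Pfaffian Schur process, giving a Fredholm Pfaffian formula for $\PP(T(1,N)\leq t)$ whose $2\times 2$ matrix kernel is written explicitly as double contour integrals. The parameter that controls the diagonal weights sits at the value $\mathrm{Exp}(1)$, which is the one producing GSE rather than a Gaussian, GOE, or crossover regime. Then, under the scaling $t = 4N + 2^{4/3} N^{1/3} y$, the exponent in each of the four entries of the matrix kernel admits a common double critical point on the positive real axis. Deforming the contours onto standard Airy-type steepest-descent curves through it and rescaling locally by $N^{1/3}$, one matches the entrywise limit with the kernel underlying $F_{\mathrm{GSE}}$ from Definition~\ref{def:GSEdistribution}. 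Uniform exponential tail bounds then upgrade pointwise kernel convergence to convergence of the Fredholm Pfaffian, yielding the theorem after translating the LPP variable $y$ back to $x$ via $N \sim t/4$.

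The main obstacle I expect is this last step. The four entries of the Pfaffian kernel have genuinely different contour geometries, and controlling them simultaneously with decay bounds uniform in the large parameter is the most delicate part. Confirming that the kernel limit is the GSE one, rather than the GOE or a crossover kernel, is the structural reason the FTASEP answer differs from the GUE behavior of the MADM exclusion process in \cite{barraquand2015q}, and this identification hinges on the critical tuning of the diagonal weight rate that automatically falls out of the FTASEP-to-LPP mapping.
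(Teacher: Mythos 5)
Your proposal is correct and lands on the same core reduction as the paper: $x_1(t)$ is governed by the diagonal passage time $H(N,N)$ of half-space exponential LPP, whose GSE limit (Theorem \ref{theo:LPPdiagointro}, quoted from \cite{baik2017pfaffian}) is then inverted to get the $t^{1/3}$, $F_{\mathrm{GSE}}$ statement. Where you genuinely differ is in how the mapping is established. The paper goes FTASEP $\to$ half-line TASEP by reading the gaps $g_i=x_i-x_{i+1}-1$ as occupation variables (Propositions \ref{prop:mapping}--\ref{prop:coupling}), and then half-line TASEP $\to$ half-quadrant LPP by the Rost correspondence (Lemma \ref{lem:couplinglppTASEP}); you instead write the jump-time recurrence $T(k,\ell)=\max(T(k-1,\ell),T(k+1,\ell-1))+w(k,\ell)$ directly from the exclusion and facilitation constraints and shear it onto the half-quadrant. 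Your recurrence is correct (the key point, which you should state, is that once both constraints for particle $k$'s $\ell$-th jump are met they remain met until the jump occurs, so the memoryless property gives i.i.d.\ $\mathrm{Exp}(1)$ increments; also note $T(2,\ell-1)\geqslant T(1,\ell-1)$ automatically, so your boundary rule reduces to the paper's $H(n,n)=w_{n,n}+H(n,n-1)$), and it is arguably more direct since it bypasses the intermediate half-line TASEP. The one caution is your second step: the paper does \emph{not} reprove the GSE asymptotics of $H(N,N)$ --- it imports them wholesale from \cite{baik2017pfaffian} and only performs the elementary inversion $\PP(x_1(t)\leqslant \tfrac t4+\varsigma t^{1/3}y)=\PP(H(\cdot,\cdot)\geqslant t)$ with $\varsigma=2^{-4/3}$. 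Your sketch of the Pfaffian Schur process analysis (critical point, contour deformation, tail bounds for the four kernel entries) is the correct program, but as written it compresses the entire analytic content of the cited companion paper into a few sentences; if you intend to cite that result rather than reprove it, your argument is complete, and otherwise the steepest-descent control of the $2\times2$ kernel is where all the remaining work lies.
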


In the bulk of the rarefaction fan, however, the locations of particles fluctuate as  the KPZ scaling theory predicts \cite{krug1992amplitude, spohn2012kpz}.
\begin{theorem}
For FTASEP with step initial data, and for any $r\in(0,1)$,
$$ \PP\left(  \frac{x_{\lfloor r t\rfloor}(t) - t\frac{1-6r+r^2}{4}}{ \varsigma t^{1/3}} \geqslant x\right) \xrightarrow[t\to\infty]{} F_{\mathrm{GUE}}(-x), $$
where $\varsigma = 2^{-4/3}\frac{(1+r)^{5/3}}{(1-r)^{1/3}}$ and the GUE Tracy-Widom distribution function $F_{\mathrm{GUE}}$ is defined in Section \ref{def:GUEdistribution}.
\label{th:FTASEPGUEintro}
\end{theorem}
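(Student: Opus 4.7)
\bigskip

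\noindent\textbf{Proof proposal.} My plan is to reduce the theorem to an asymptotic analysis of a half-space last passage percolation (LPP) model via the mapping advertised in the abstract, and then show that in the bulk regime the half-space Pfaffian kernel degenerates to the usual Airy kernel on one of its diagonal blocks.

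First, I would record the mapping $x_k(t) \mapsto L(n,m)$ sending the position of the $k$-th particle of FTASEP to the last passage time in a suitable half-quadrant LPP with exponential weights (doubled along the diagonal). Under this correspondence, the event $\{x_{\lfloor r t \rfloor}(t) \geqslant X\}$ translates into an event of the form $\{L(n,m) \leqslant T\}$ with explicit choices of $n=n(t,r,X)$, $m=m(t,r,X)$, and $T=T(t,r,X)$. The parameters should satisfy $n/t, m/t$ bounded away from $0$ and from each other: this is the crucial point that tells us we are working far from the reflecting diagonal of the half-quadrant. In particular, the macroscopic centering $t(1-6r+r^2)/4$ in the theorem is the Legendre transform of the flux \eqref{eq:flux} at the density $\rho = 2/(3-r)$ dictated by the characteristic relation $r = 3 - 2/\rho$ derived from the profile in Figure~\ref{fig:densityprofile}, and the scale $\varsigma$ arises from the third derivative of the exponent in the double contour integral representation of the kernel (I would double-check the constant $2^{-4/3}(1+r)^{5/3}(1-r)^{-1/3}$ at the end of the saddle-point computation).

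Next, I would invoke the Pfaffian Schur process representation from the body of the paper to write
\begin{equation*}
\PP\bigl(L(n,m) \leqslant T\bigr) = \Pf\bigl(J - \kernel\bigr)_{\ell^2(\{T, T+1, \dots\})},
\end{equation*}
for an explicit $2\times 2$ matrix kernel $\kernel$ whose entries are double contour integrals. Under the scaling $n, m, T$ tuned as above, I would perform a standard steepest descent analysis at the real critical point $z_\star$ of the exponent; because $n/t$ and $m/t$ are bounded away from $0$, the critical point lies strictly inside the contour of integration and strictly away from the $\pm z$ symmetry locus that controls the boundary block of the Pfaffian kernel. Consequently, after rescaling by $\varsigma t^{1/3}$ the $(1,1)$ entry of $\kernel$ converges to the Airy kernel $\kernel_{\Ai}$, while the $(1,2)$, $(2,1)$, and $(2,2)$ entries carry an extra decaying Gaussian factor (from the displacement of the second critical point by a macroscopic amount) and tend to $0$ uniformly on compact sets.

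Given this decoupling, the Fredholm Pfaffian degenerates to a Fredholm determinant: $\Pf(J-\kernel) \to \det(\Ikernel - \kernel_{\Ai})_{L^2(x,\infty)} = F_{\mathrm{GUE}}(-x)$, which is the desired limit. The main technical obstacle, as in all such arguments, is uniformity and tail control: one must produce contours along which the real part of the exponent has the correct monotonicity globally, justify the exchange of limit and Fredholm Pfaffian, and bound the entries of $\kernel$ uniformly so that the convergence extends to trace-class convergence of the relevant operators. I expect the contour choice and the estimates on the $(2,2)$ entry (which contains the potentially dangerous boundary resonance) to be the most delicate part, but the boundedness of $r$ away from $0$ and $1$ keeps the critical point on the ``good'' side and should make these estimates routine extensions of those carried out for the rightmost particle (which corresponds to the boundary of the LPP array, and hence to the more singular regime producing the GSE statistics of Theorem~\ref{th:FTASEPGSEintro}).
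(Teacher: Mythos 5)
Your overall route is the same as the paper's: Proposition \ref{prop:coupling} and Lemma \ref{lem:couplinglppTASEP} convert the event $\lbrace x_{\lfloor rt\rfloor}(t)\leqslant \pi t + \varsigma t^{1/3}y\rbrace$ into $\lbrace H(n,m)\geqslant t\rbrace$ with $n\sim(2r+\pi)t$, $m\sim(r+\pi)t$, and the ratio $m/n\to\kappa=(1-r)^2/(1+r)^2\in(0,1)$ puts you squarely in the regime of Theorem \ref{theo:LPPawaydiagointro}. The main difference is that the paper treats that LPP limit theorem as a black box (it is Theorem 1.5 of \cite{baik2017pfaffian}), so the entire content of its proof is the parameter matching: solving $(1+\sqrt{\kappa})^2(2r+\pi)=1$ to get $\pi=(1-6r+r^2)/4$ and computing $\varsigma=\sigma(2r+\pi)^{4/3}=2^{-4/3}(1+r)^{5/3}(1-r)^{-1/3}$ --- exactly the step you defer (``I would double-check the constant''). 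You should also record that the hypothesis $\alpha>\sqrt{\kappa}/(1+\sqrt{\kappa})$ of Theorem \ref{theo:LPPawaydiagointro} reads $\alpha>(1-r)/2$ here and is automatic for $\alpha=1$. One structural point in your sketch of the kernel asymptotics is wrong: in the degeneration of the Fredholm Pfaffian to a Fredholm determinant it is the \emph{off-diagonal} entry $\kernel_{12}$ that must converge to the Airy kernel, with $\kernel_{11}$ and $\kernel_{22}$ tending to zero after conjugation, whence $\Pf(J-\kernel)\to\det(I-\kernel_{12})$. The $(1,1)$ entry is antisymmetric by construction (note the factor $z-w$ in all the $\kernel_{11}$ formulas of this paper), so it cannot converge to the symmetric kernel $\kernel_{\rm Ai}$; and if only $\kernel_{11}$ survived, every Pfaffian in the series expansion would collapse and the limit would be $1$, not $F_{\rm GUE}(-x)$. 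This does not invalidate your plan --- the correct version of the decoupling is what \cite{baik2017pfaffian} proves --- but as written your decoupling step would not produce the determinant you want.
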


We now consider a slightly more general process depending on a parameter $\alpha>0$ that we denote FTASEP($\alpha$), where the first particle jumps at rate $\alpha$ instead of $1$. We already know the nature of fluctuations of $x_1(t)$ when $\alpha=1$. It is natural to expect that fluctuations are still GSE Tracy-Widom distributed on the $t^{1/3}$ scale for $\alpha>1$. However, if $\alpha$ is very small, one expects that the first particle jumps according to a Poisson point process with intensity $\alpha$ and thus $x_1(t)$ has Gaussian fluctuations on the $t^{1/2}$ scale. It turns out that the threshold between these regimes happen when $\alpha=1/2$. 
\begin{theorem}
	Let $\vec{x}(t) = \lbrace x_n(t) \rbrace_{n\geqslant 1}$ be the particles positions in the FTASEP($\alpha$) started from step initial condition, when the first particle jumps at rate $\alpha$. Then,
	\begin{enumerate}
		\item For $\alpha >1/2$ ,
		$$ \PP\left(  \frac{x_{1}(t) - \frac t 4}{2^{-4/3} t^{1/3}} \geqslant x\right) \xrightarrow[t\to\infty]{} F_{\mathrm{GSE}}(- x).$$
		\item For $ \alpha =1/2$,
		$$ \PP\left(  \frac{x_{1}(t) - \frac t 4}{2^{-4/3} t^{1/3}} \geqslant x\right) \xrightarrow[t\to\infty]{} F_{\mathrm{GOE}}(- x).$$
		\item For $\alpha<1/2  $,
		$$ \PP\left(  \frac{x_{1}(t) - t\alpha(1-\alpha)}{\varsigma t^{1/2}} \geqslant x\right) \xrightarrow[t\to\infty]{} G(- x),$$
		where $G(x)$ is the standard Gaussian distribution function and
		$ \varsigma = \frac{1-2\alpha}{\sqrt{\alpha(1-\alpha)}}. $
	\end{enumerate}
	\label{thm:fluctuations}
\end{theorem}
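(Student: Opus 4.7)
The plan is to reduce $x_1(t)$ to a last-passage percolation problem in a half-quadrant, express its distribution as a Fredholm Pfaffian via the Pfaffian Schur process, and perform a steepest-descent analysis whose outcome depends on the location of a boundary pole relative to a critical point. First, I would extend the mapping used for Theorem~\ref{th:FTASEPGSEintro} to arbitrary $\alpha>0$. Under step initial condition, the event $\{x_1(t)\geqslant k\}$ translates into $\{L_\alpha(N,N+k)\leqslant t\}$, where $L_\alpha$ is the last-passage time in the half-quadrant $\{(i,j):i\leqslant j\}$ with i.i.d.\ exponential interior weights and exponential diagonal weights whose rate is governed by $\alpha$. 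Thus $\alpha$ sets the intensity of the boundary row in an otherwise fixed half-space LPP model.

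Second, invoking the Pfaffian Schur process theory developed in the paper, I would write $\PP(L_\alpha(N,N+k)\leqslant t)$ as a Fredholm Pfaffian $\Pf(J-\kernel)$ for a $2\times 2$ correlation kernel $\kernel$ given by explicit double contour integrals. Each entry of $\kernel$ carries a simple factor of the form $1/(z-\alpha)$ (up to conventions) encoding the boundary weights, while the remainder of the integrand is independent of $\alpha$ and matches the kernel used in the proof of Theorem~\ref{th:FTASEPGSEintro}.

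The heart of the argument is the asymptotic analysis in the three regimes. The action in the exponential factor of $\kernel$ has a double critical point $z_c$ corresponding to the macroscopic speed $1/4$ of the first particle. For $\alpha>1/2$, the boundary pole lies strictly beyond $z_c$ and may be kept outside the steepest-descent contour without residue contributions; the KPZ rescaling $k=t/4+2^{-4/3}xt^{1/3}$ then produces the Pfaffian GSE limit, as in Theorem~\ref{th:FTASEPGSEintro}. For $\alpha<1/2$, the pole sits on the opposite side of $z_c$, so deforming to steepest descent crosses it; the residue picked up dominates the remaining integrals and a Laplace-method evaluation of its asymptotics yields the claimed Gaussian CLT with mean $t\alpha(1-\alpha)$ and variance $\varsigma^{2}t$. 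In the critical case $\alpha=1/2$, the pole collides with $z_c$, and after the $t^{-1/3}$ local rescaling around $z_c$ the pole factor enters the limiting integrand on the same footing as the Airy exponential, producing the Pfaffian kernel known to represent $F_{\mathrm{GOE}}$.

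The main obstacle will be the critical case $\alpha=1/2$, together with the uniformity of the asymptotic analysis for $\alpha$ close to the threshold. One must control the kernel through a colliding-pole/colliding-saddle regime, which requires deforming the integration contours through a $t^{-1/3}$-neighborhood of $z_c$ while preserving steepest-descent estimates, and proving uniform tail bounds on the kernel entries to justify passing to the limit in the Fredholm Pfaffian expansion by dominated convergence. This is the half-space analogue of a BBP-type transition and parallels the GSE--GOE--Gaussian crossovers established for half-space LPP. The GSE regime is already covered by Theorem~\ref{th:FTASEPGSEintro}, and the Gaussian regime reduces to a classical saddle-point evaluation of a residue, so both are technically lighter than the critical case.
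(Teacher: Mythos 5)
Your overall strategy points in the right direction, but it diverges substantially from what this paper actually does, and your first step contains an error that matters. The paper's proof of Theorem \ref{thm:fluctuations} is a short transfer argument: it combines Proposition \ref{prop:coupling} (the process-level identity in law $x_n(t)+n = N_n(t)$ with the half-line TASEP current) with Lemma \ref{lem:couplinglppTASEP}, which gives $\PP(N_n(t)\leqslant y)=\PP(H(n+y-1,y)\geqslant t)$, and then simply inverts the already-established limit theorem for the \emph{diagonal} passage time $H(n,n)$ (Theorem \ref{theo:LPPdiagointro}, quoted from \cite{baik2017pfaffian}) by matching scalings. No Fredholm Pfaffian and no steepest-descent analysis appear in this proof; the Pfaffian Schur process machinery and the pole-versus-double-critical-point trichotomy you describe are the content of the cited companion paper, not of the argument here. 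Your plan therefore amounts to reproving Theorem \ref{theo:LPPdiagointro} from scratch. That is legitimate in principle, but it is far heavier than necessary, and the genuinely delicate part you correctly flag (uniform control of the kernel through the colliding pole/saddle at $\alpha=1/2$, plus tail bounds justifying dominated convergence in the Pfaffian expansion) is exactly what the paper imports as a black box.

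The concrete gap is in your reduction: the identity ``$\{x_1(t)\geqslant k\}=\{L_\alpha(N,N+k)\leqslant t\}$'' places the last-passage point at off-diagonal displacement $k$. The correct reduction for the first particle is $\PP(x_1(t)\leqslant y)=\PP\big(H(\lfloor y\rfloor+1,\lfloor y\rfloor+1)\geqslant t\big)$: the position enters through the \emph{diagonal} coordinate, and the relevant passage time sits exactly on the diagonal for all $t$. This is not cosmetic. The GSE/GOE/Gaussian trichotomy in $\alpha$ is a boundary effect visible only for passage times on (or within $O(n^{2/3})$ of) the diagonal; a passage point at macroscopic off-diagonal displacement has GUE fluctuations for all admissible $\alpha$ (Theorem \ref{theo:LPPawaydiagointro}), so with your indexing the analysis would not yield the stated theorem. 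Once the point is correctly placed on the diagonal, the remaining work is the elementary matching of constants, e.g.\ in the GSE case setting $n=\lfloor t/4+\varsigma y\,t^{1/3}\rfloor$ and requiring $4\varsigma=\sigma(1/4)^{1/3}=2^{2/3}$, whence $\varsigma=2^{-4/3}$; the Gaussian case is handled identically with centering $t\alpha(1-\alpha)=t/h(\alpha)$.
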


It is also possible to characterize the joint distribution of several particles. An interesting case arises when we scale $\alpha$ close to the critical point and we look at particles indexed by $\eta t^{2/3}$ for different values of $\eta\geqslant 0$. More precisely, we scale 
	$$ \alpha = \frac{1+2^{4/3}\varpi \tau^{-1/3}}{2},$$
	where $\varpi \in \R$ is a free parameter and for any $\eta\geqslant 0$ consider the rescaled particle  position at time $t$ 
\begin{equation} X_{t}(\eta) := \frac{x_{2^{1/3}\eta t^{2/3}}(t) -\frac{t}{4}+ \eta \rho_0^{-1} 2^{1/3}t^{2/3} -\eta^2 2^{-4/3}}{t^{1/3} 2^{-4/3}},
\label{eq:defrescaledposition}
\end{equation}
 where $\rho_0=2/3$ (This is the density near the first particles in FTASEP($1$)). 
\begin{theorem}
	For any $p_1, \dots, p_k \in \R$,  and $0\leqslant \eta_1 < \dots < \eta_k$
	$$\lim_{t\to\infty} \PP\left( \bigcap_{i=1}^k  \left\lbrace X_{t}(\eta_i)  \geqslant p_i\right\rbrace  \right) = \Pf(J  - \kernel^{\rm cross})_{\mathbb{L}^2(\mathbb{D}_k(-p_1, \dots, -p_k))},$$
	where the right hand side is the Fredholm Pfaffian (see Definition \ref{def:FredholmPfaffian}) of some kernel  $\kernel^{\rm cross}$ (depending on $\varpi$ and the $\eta_i$) introduced in \cite[Section 2.5]{baik2017pfaffian} (see also Section \ref{sec:crossoverasymptotics} of the present paper) on the domain $\mathbb{D}_k(-p_1, \dots, -p_k)$ where 
	$$ \mathbb{D}_k(x_1, \dots, x_k) = \lbrace (i,x)\in \lbrace 1, \dots, k\rbrace\times\R: x \geqslant x_i\rbrace.$$
	\label{thm:crossFTASEP}
\end{theorem}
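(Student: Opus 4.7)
The plan is to reduce Theorem \ref{thm:crossFTASEP} to the multipoint asymptotic analysis of half-space last passage percolation (LPP) developed earlier in the paper. Because FTASEP with step initial data is in bijection with a half-quadrant LPP model whose boundary weight is governed by the jump rate $\alpha$ of the leading particle, the event $\{x_n(t)\geq m\}$ can be rewritten as a passage-time event of the form $\{L(n,m)\leq t\}$ (after the standard geometric-to-exponential $q\to 1$ limit from Pfaffian Schur processes). Under this identification the critical scaling $\alpha=\tfrac12(1+2^{4/3}\varpi\,\tau^{-1/3})$ is exactly the scaling of the boundary weight near its critical value that drives the GUE/GOE/GSE crossover in half-space LPP.

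Using this dictionary, I would translate the joint event $\bigcap_{i=1}^{k}\{X_{t}(\eta_i)\geq p_i\}$ into the joint law of the LPP times $\{L(n_i,m_i)\}$ along a down-right path, where the indices are read off \eqref{eq:defrescaledposition}:
\[
n_i = 2^{1/3}\eta_i t^{2/3},\qquad m_i \approx \tfrac{t}{4}-\eta_i \rho_0^{-1} 2^{1/3}t^{2/3}+\eta_i^2\, 2^{-4/3}+p_i\cdot 2^{-4/3}t^{1/3}.
\]
Because the points $(n_i,m_i)$ all lie on a down-right path in the half-quadrant, their joint distribution is governed by the Pfaffian Schur process, which yields a finite-$t$ Fredholm Pfaffian representation of the joint distribution in terms of a $2\times 2$ matrix correlation kernel of double-contour integrals.

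The main step is then a steepest descent analysis of this pre-limit kernel. One deforms the contours to pass through the hydrodynamic critical point corresponding to $\pi(\rho_0)=1/4$ and $\rho_0=2/3$. The $t^{2/3}$-order shift of the indices in $n_i,m_i$ produces the linear term in $\eta_i$ in the exponent on the steepest descent contour, the $\eta_i^2\, 2^{-4/3}$ correction in \eqref{eq:defrescaledposition} is precisely what cancels the quadratic remainder so that the limiting exponents are cubic plus linear, and the scaling of $\alpha$ places the boundary pole within $O(t^{-1/3})$ of the critical point with offset $\varpi$. Collecting these asymptotics on the rescaled Airy contours should reproduce exactly the crossover kernel $\kernel^{\rm cross}$ from \cite[Section 2.5]{baik2017pfaffian}, with the correct $\eta_i$- and $\varpi$-dependence in each of its four matrix entries.

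The hard part will be the uniform control needed to upgrade pointwise convergence of the kernel to convergence of the Fredholm Pfaffian on the domain $\mathbb{D}_k(-p_1,\dots,-p_k)$. This requires dominating each of the four blocks by integrable functions uniformly in $t$, which is delicate in the half-space setting because the boundary pole moves with $\varpi$, and because the $\eta_i$-dependent shifts break the symmetry between the two variables of the kernel, so one must carefully choose common steep contours that work simultaneously for all indices. Once these bounds are established, however, the strategy is the same as for Theorems \ref{th:FTASEPGSEintro}--\ref{th:FTASEPGUEintro}: apply the general half-space LPP asymptotic framework of Section \ref{sec:crossoverasymptotics} to obtain the Fredholm Pfaffian limit, and translate back through the FTASEP/LPP bijection to conclude Theorem \ref{thm:crossFTASEP}.
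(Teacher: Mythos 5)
Your proposal follows essentially the same route as the paper: the FTASEP/half-line-TASEP/half-space-LPP coupling (Propositions \ref{prop:mapping}, \ref{prop:coupling} and Lemma \ref{lem:couplinglppTASEP}) converts the joint event into a joint passage-time event, the parameters are matched so that the critical boundary-rate scaling becomes $\alpha=\tfrac12(1+2\sigma^{-1}\varpi n^{-1/3})$, and the conclusion is the steepest-descent limit of the exponential Pfaffian kernel to $\kernel^{\rm cross}$ with Hadamard-type bounds justifying convergence of the Fredholm Pfaffian. The paper merely factors the analytic part through Theorem \ref{theo:crossfluctuations} (proved in Section \ref{sec:crossoverasymptotics}), so your sketch is correct and structurally identical.
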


For the FTASEP, that is when $\alpha=1$ we have 
\begin{theorem}
	For any $p_1, \dots, p_k \in \R$,  and $0< \eta_1 < \dots < \eta_k$
	$$\lim_{t\to\infty} \PP\left( \bigcap_{i=1}^k  \left\lbrace X_{t}(\eta_i)  \geqslant p_i\right\rbrace  \right) = \Pf(J - \kernel^{\rm SU})_{\mathbb{L}^2(\mathbb{D}_k(-p_1, \dots, -p_k))},$$
	where the right hand side is the Fredholm Pfaffian of some kernel  $\kernel^{\rm SU}$ (depending on the $\eta_i$) introduced in \cite[Section 2.5]{baik2017pfaffian}  (see also Section \ref{sec:crossoverasymptotics}).
	\label{thm:SUFTASEP}
\end{theorem}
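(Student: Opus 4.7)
The strategy parallels the proof of Theorem \ref{thm:crossFTASEP}: convert FTASEP with step initial condition into a last-passage percolation model in the half-quadrant, recognize the joint law of the relevant last-passage times as a marginal of a Pfaffian Schur process, write the multipoint probability as a Fredholm Pfaffian with an explicit kernel, and perform a steepest-descent asymptotic analysis. What changes from the previous theorem is the scaling of the boundary parameter: here $\alpha=1$ is held fixed, strictly away from the critical value $1/2$, so the limiting kernel is $\kernel^{\rm SU}$ instead of the crossover kernel $\kernel^{\rm cross}$.

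First I would recall the mapping of FTASEP to a half-space LPP model. Under this correspondence, the event $\lbrace x_n(t) \geqslant m\rbrace$ translates into an event on the last-passage time to a certain point $(n,n+m)$ in the half-quadrant, where the boundary weight is a function of $\alpha$; in particular $\alpha=1$ corresponds to a boundary weight bounded away from criticality. Picking the $k$ indices $n_i = 2^{1/3}\eta_i t^{2/3}$ and corresponding spatial shifts dictated by \eqref{eq:defrescaledposition} yields a $k$-tuple of last-passage times whose joint distribution is encoded, via the standard construction, in the Pfaffian Schur process with an appropriate boundary specialization. This gives a Fredholm Pfaffian representation
\[
\PP\!\left( \bigcap_{i=1}^k \lbrace X_t(\eta_i) \geqslant p_i\rbrace\right) = \Pf(J - \kernel_t)_{\mathbb{L}^2(\mathbb{D}_k(-p_1,\dots,-p_k))},
\]
where $\kernel_t$ is a $2\times 2$ matrix kernel built from contour integrals whose integrand involves ratios of the FTASEP-LPP weight generating functions and the single boundary factor $1/(1-\alpha z)$ (the precise form is recalled from the earlier sections of the paper).

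Next I would perform the steepest descent. Each entry of $\kernel_t$ is a double (or single) contour integral of the form $\iint e^{t(G(w)-G(z))} R(w,z)\,dw\,dz$, with $G$ having a double critical point at the same $z_c$ that governs the one-point analysis leading to Theorem \ref{thm:fluctuations}(1). The rescalings in \eqref{eq:defrescaledposition} are exactly those which bring the double critical point into the Airy-type regime on the $t^{1/3}$ scale, while the prefactor $\eta_i t^{2/3}$ produces the linear shift $\eta_i(z-z_c)$ characteristic of time-like decorrelation in the Airy process. Deforming the contours to steepest-descent curves through $z_c$ and rescaling $z = z_c + 2^{-4/3}t^{-1/3}\tilde z$, one obtains pointwise convergence of the integrand to the explicit integrand defining $\kernel^{\rm SU}$ of \cite[Section 2.5]{baik2017pfaffian}. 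Uniform integrable bounds (exponential decay on the contours away from the critical point, standard in this class of Pfaffian-Schur asymptotics) then promote pointwise convergence to trace-class convergence of the kernels and hence convergence of the Fredholm Pfaffians.

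The main obstacle is the treatment of the boundary pole at $z = 1/\alpha$. For $\alpha=1$, this pole sits at $z=1$, strictly on the side of $z_c$ that determines whether one collects its residue when deforming to the descent contour; doing so correctly is what produces the GSE-type structure of $\kernel^{\rm SU}$ rather than the GOE-type structure that appears at $\alpha=1/2$ (Theorem \ref{thm:fluctuations}(2)) or the crossover kernel $\kernel^{\rm cross}$ that appears when $\alpha$ is scaled to criticality. One must check that, under the simultaneous scaling of the $k$ indices by $\eta_i t^{2/3}$, the contour for each variable can be chosen to pass on the correct side of $1$ while remaining a steepest-descent curve through $z_c$ and respecting the ordering $0 < \eta_1 < \dots < \eta_k$ (which is exactly where this ordering hypothesis is used). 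Once these geometric constraints are verified and the residue contributions reorganized into the $2\times 2$ structure of $\kernel^{\rm SU}$, the desired limit follows; tail bounds and tightness of the Fredholm Pfaffian series are handled as in the standard Pfaffian-Schur steepest descent arguments already invoked for Theorem \ref{thm:crossFTASEP}.
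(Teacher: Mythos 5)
Your proposal follows essentially the same route as the paper: map FTASEP to half-space exponential LPP via the hole/current coupling, invoke the Pfaffian Schur process Fredholm Pfaffian formula for the joint law of last-passage times, and run a steepest-descent analysis at the double critical point with contour bounds to upgrade pointwise kernel convergence to convergence of the Fredholm Pfaffians (the paper organizes this as Theorem \ref{theo:SU} plus a deterministic parameter matching, treating the $\alpha>1/2$ case as the $\varpi\to+\infty$ simplification of the crossover analysis). The only inaccuracies are cosmetic: the boundary pole in the paper's parametrization sits at $z=(2\alpha-1)/2$, not $z=1/\alpha$, and for fixed $\alpha>1/2$ it stays bounded away from the critical point so no residue needs to be collected in the $\Ikernel$ part.
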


\subsection{Half-space last passage percolation}
\label{sec:halfspaceLPP}
Our route to prove Theorems \ref{th:FTASEPGSEintro}, \ref{th:FTASEPGUEintro}, \ref{thm:fluctuations}, \ref{thm:crossFTASEP} and \ref{thm:SUFTASEP} in Section \ref{sec:facilitated} uses a mapping with Last Passage Percolation (LPP) on a half-quadrant. 
\begin{definition}[Half-space exponential weight LPP] Let $\big(w_{n,m}\big)_{n\geqslant m\geqslant 0}$ be a sequence of  i.i.d. exponential random variables with rate $1$ (see Definition \ref{def:expdistribution}) when $n\geqslant m+1$ and with rate $\alpha$ when  $n=m$. We define the exponential last passage percolation time on the half-quadrant, denoted $H(n,m)$, by the recurrence for $n\geqslant m$,
 $$ H(n,m) =  w_{n,m} + \begin{cases}
  \max\Big\lbrace H(n-1, m)  ; H(n, m-1)\Big\rbrace &\mbox{if } n\geqslant m+1, \\
H(n,m-1) &\mbox{if } n=m
\end{cases}$$
with the boundary condition $H(n,0)=0$.
\label{def:LPPexp}
\end{definition}
We show in Proposition \ref{prop:coupling} that FTASEP is equivalent to a TASEP on the positive integers with a source of particles at the origin. We call the latter model half-line TASEP. The mapping between the two processes is the following: we match the gaps between consecutive particles in the FTASEP with the occupation variables in the half-line TASEP. Otherwise said, we study how the holes travel to the left in the FTASEP and prove that if one shrinks all distances between consecutive holes by one, the dynamics of holes follow those of the half-line TASEP (see the proof of Proposition \ref{prop:coupling}, in particular Figure \ref{fig:coupling}). In the case of full-space TASEP it is well-known that the height function of TASEP has the same law as the border of the percolation cluster of the LPP model with exponential weights (in a quadrant). This mapping remains true for half-line TASEP and LPP on the half-quadrant (Lemma \ref{lem:couplinglppTASEP}, see Figure \ref{fig:lastpassagehalfquadrant}).

 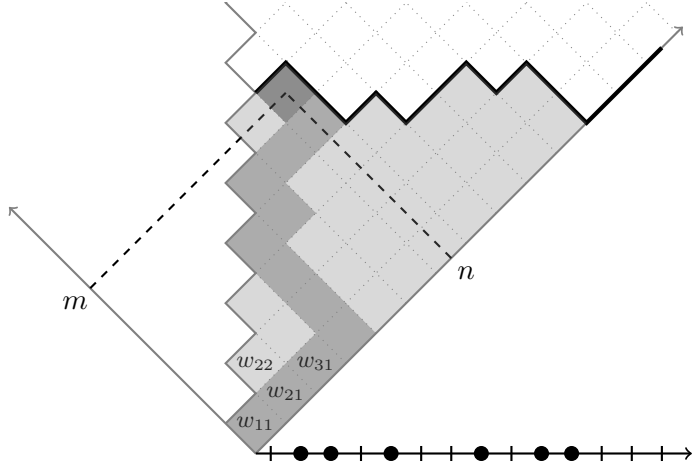
\begin{figure}
 	\begin{center}
 		\begin{tikzpicture}[scale=0.4]
 		\clip (-10,-1) rectangle(20,15);
 		\begin{scope}[rotate=45, scale=1.414]
 		\draw[thick, gray,  ->] (0,0) -- (14.2,0);
 		\draw[thick, gray, ->] (0,0) -- (0,8.2);
 		\draw[thick, gray] (0,1) -- (1,1) -- (1,2) -- (2,2) -- (2,3) -- (3,3) -- (3,4) -- (4,4) -- (4,5) -- (5,5) -- (5,6) -- (6,6) -- (6,7) -- (7,7) -- (7,8) -- (8,8)  ; 
 		\foreach \x in {1, ..., 8}
 		\draw[dotted, gray] (\x,0) -- (\x,\x);
 		\foreach \x in {9, 10, 11, 12, 13, 14}
 		\draw[dotted, gray] (\x,0) -- (\x,8);
 		\foreach \x in {1, ..., 8}
 		\draw[dotted, gray] (\x,\x) -- (14.1,\x);
 		\fill[gray, opacity=0.5] (0,0) -- (0,1) -- (3,1) -- (3,2) -- (3,3) -- (3,4) -- (4,4) -- (4,5) -- (5,5) -- (6,5) -- (6,6) -- (7,6) -- (7,5)-- (7,4) -- (6,4) -- (5,4) -- (5,3) -- (4,3) -- (4,2) -- (4,0) --  (0,0);
 		\fill[gray, opacity=0.7] (6,6) -- (7,6) -- (7,5) -- (6,5) -- (6,6);
 		\node at (6.5, -0.5) {$n$};
 		\node at (-0.5, 5.5) {$m$};
 		\draw [thick, dashed] (0,5.5) -- (6.5,5.5);
 		\draw [thick, dashed] (6.5,0) -- (6.5,5.5);
 		\node at (0.5, 0.5) {\footnotesize{$w_{11}$}};
 		\node at (1.5, 0.5) {\footnotesize{$w_{21}$}};
 		\node at (1.5, 1.5) {\footnotesize{$w_{22}$}};
 		\node at (2.5, 0.5) {\footnotesize{$w_{31}$}};
 		\draw[ultra thick, black] (6,6) -- (7,6) -- (7,4) -- (8,4) -- (8,3) -- (10, 3) -- (10,2) -- (11,2) -- (11,0) -- (13.5,0) ;
 		\fill[gray, opacity=0.3] (0,0)-- (0,1) -- (1,1) -- (1,2) -- (2,2) -- (2,3) -- (3,3) -- (3,4) -- (4,4) -- (4,5) -- (5,5) -- (5,6) -- (6,6) -- (7,6) -- (7,4) -- (8,4) -- (8,3) -- (10, 3) -- (10,2) -- (11,2) -- (11,0);
 		\end{scope}
 		
 		\draw[thick,  ->] (0,0) -- (14.5,0);
 		\foreach \x in {0, 3, 5, 6, 8, 11, 12, 13}
 		\draw[thick] (\x +0.5,-0.25) -- (\x +0.5,0.25);
 		\foreach \x in {1, 2, 4, 7, 9, 10}
 		{
 			\fill[thick] (\x + 0.5, 0) circle(0.25);}
 		\end{tikzpicture}
 	\end{center}
 	\caption{LPP on the half-quadrant. One admissible path from $(1,1)$ to $(n,m)$ is shown in dark gray. $H(n,m)$ is the maximum over such paths of the sum of the weights $w_{ij}$ along the path. The light gray area corresponds to the percolation cluster at some fixed time, and its border (shown in black) is associated with the particle system depicted on the horizontal line.}
 	\label{fig:lastpassagehalfquadrant}
 \end{figure}

The advantage of this mapping between FTASEP and half-space  last-passage percolation is that we can now use limit theorems proved for the latter (see \cite{baik2017pfaffian} and references therein), which we recall below. 
\begin{theorem}[{\protect\cite[Theorem 1.4]{baik2017pfaffian}}] The last passage time on the diagonal $H(n,n)$ satisfies the following limit theorems, depending on the rate $\alpha$ of the weights on the diagonal.
\begin{enumerate}
\item For $\alpha >1/2$,
$$ \lim_{n\to\infty} \PP\left( \frac{H(n,n) -4n}{2^{4/3}n^{1/3}} < x \right) = F_{\rm GSE}\left( x\right).$$
\item For $\alpha =1/2$,
$$ \lim_{n\to\infty} \PP\left( \frac{H(n,n) -4n}{2^{4/3}n^{1/3}} < x \right) = F_{\rm GOE}\left( x\right),$$
where the GOE Tracy-Widom distribution function $F_{\mathrm{GOE}}$ is defined in Lemma  \ref{def:GOEdistribution}.
\item For $\alpha <1/2$,
$$ \lim_{n\to\infty} \PP\left( \frac{H(n, n) -\frac{n}{\alpha(1-\alpha)}}{\sigma n^{1/2}} < x \right)  = G(x),$$
where $G(x)$ is the probability distribution function of the standard Gaussian, and
$$\sigma^2= \frac{1-2\alpha}{\alpha^2 (1-\alpha)^2}.$$
\end{enumerate}
\label{theo:LPPdiagointro}
\end{theorem}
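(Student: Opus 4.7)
The plan is to derive a Fredholm Pfaffian formula for $\PP(H(n,n)\leq s)$ from the theory of Pfaffian Schur processes, and then to perform steepest-descent asymptotics that develop three distinct behaviors according to the position of the boundary parameter $\alpha$ relative to the critical value $1/2$.

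First I would show that the half-space exponential LPP with a diagonal weight of rate $\alpha$ fits into the Pfaffian Schur framework. The standard route is to realize the joint law of the geometric-weight analogue of $\{H(n,m)\}_{n\geqslant m\geqslant 0}$ as a Pfaffian Schur process, and then take an exponential scaling limit. This should yield a formula of the form
$$\PP(H(n,n) \leq s) = \Pf(J - K)_{\mathbb{L}^2([s,\infty))},$$
where $K$ is a $2\times 2$ matrix kernel whose four entries are explicit double contour integrals. The kernel carries the boundary parameter $\alpha$ through simple poles at $z=\alpha$ and $w=\alpha$, while the $n$-dependence is concentrated in an exponential factor of the form $\exp(n G(z)+n G(w))$ with $G$ an explicit rational function having a double critical point at $z_c=1/2$, responsible for the law of large numbers $H(n,n)/n\to 4$.

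Next I would carry out steepest descent. Set $s = 4n + 2^{4/3} n^{1/3} x$ and zoom in with $z = 1/2 + \zeta/(c n^{1/3})$, $w = 1/2 + \omega/(c n^{1/3})$. The three regimes correspond to the position of the pole at $\alpha$ relative to the saddle at $z_c=1/2$. When $\alpha>1/2$, the steepest-descent contours through $z_c$ can be deformed to leave the pole at $\alpha$ on the far side; the kernel converges to the Pfaffian Airy kernel of symplectic type and the Fredholm Pfaffian converges to $F_{\rm GSE}(x)$. When $\alpha=1/2$, the pole collides with the saddle, producing an additional rank-one contribution to the limit kernel, precisely the one whose Fredholm Pfaffian equals $F_{\rm GOE}(x)$. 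When $\alpha<1/2$, the pole is on the wrong side of the saddle; deforming through it picks up a residue term whose leading contribution is of order $n^{1/2}$. On the scale $s = n/(\alpha(1-\alpha)) + \sigma n^{1/2} x$, a Laplace-type analysis of this residue term produces the Gaussian distribution with variance $\sigma^2 = (1-2\alpha)/(\alpha(1-\alpha))^2$, while the remaining part of the integral is subleading.

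The main obstacle is the uniform analytic control of the kernel along the deformed contours, particularly near $\alpha=1/2$ where the saddle and the pole coincide. One must choose contours following the directions of steepest descent of $\Real G$ through $z_c$ while staying on the correct side of the pole at $\alpha$, and show that the parts of the contours away from $z_c$ contribute negligibly in all three regimes. The trace-norm bounds needed to promote pointwise convergence of the $2\times 2$ kernel entries to convergence of Fredholm Pfaffians are standard once adequate decay estimates on $\Real G$ are in place, but at $\alpha=1/2$ one must additionally check that the emerging rank-one perturbation of the Airy kernel has exactly the form producing $F_{\rm GOE}$ rather than an interpolating crossover kernel, which requires tracking the precise constants in the residue computation.
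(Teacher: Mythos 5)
This theorem is not proved in the paper itself --- it is imported verbatim from \cite[Theorem 1.4]{baik2017pfaffian} --- but your outline matches the strategy of that reference, whose key ingredients (the Fredholm Pfaffian formula recalled as Proposition \ref{prop:kernelexponential}, with the boundary parameter entering through factors $(2\alpha-1\pm 2z)$, and the steepest-descent analysis around the double critical point of $f(z)=-4z+\log(1+2z)-\log(1-2z)$, where the boundary pole collides with the saddle exactly at $\alpha=1/2$) are exactly the ones this paper recalls and re-uses in Sections \ref{sec:kpointdistribution} and \ref{sec:crossoverasymptotics}. Your sketch is structurally correct, including the trichotomy governed by the position of the pole relative to the saddle and the Gaussian variance $\sigma^2=(1-2\alpha)/(\alpha^2(1-\alpha)^2)$ in the subcritical regime, so it is essentially the same approach as the cited proof.
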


Away from the diagonal, the limit theorem satisfied by $H(n,m)$ happens to be exactly the same as in the unsymmetrized or full-space model.
\begin{theorem}[{\protect\cite[Theorem 1.5]{baik2017pfaffian}}]
For any $\kappa \in (0,1)$ and $\alpha >\frac{\sqrt{\kappa}}{1+\sqrt{\kappa}}$, we have that when $m=\kappa n+sn^{2/3-\epsilon}$, for any $s\in \R$ and $\epsilon>0$, 
$$ \lim_{n\to\infty} \PP\left( \frac{H(n,m) -(1+\sqrt{\kappa})^2 n}{\sigma n^{1/3}} < x \right) = F_{\rm GUE}(x),$$
where
$$ \sigma = \frac{(1+ \sqrt{\kappa})^{4/3}}{\sqrt{\kappa}^{1/3}}.$$
\label{theo:LPPawaydiagointro}
\end{theorem}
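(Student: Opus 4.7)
The plan is to invoke the Pfaffian Schur process representation for $H(n,m)$: for half-space exponential LPP, the distribution $\PP(H(n,m) \leqslant x)$ admits an explicit Fredholm Pfaffian formula $\Pf(J-\kernel)_{\mathbb{L}^2(x,\infty)}$ with a $2\times 2$ matrix kernel $\kernel$ whose entries are double contour integrals over $z,w$ of integrands of the form $\exp(n G(z)+nG(w))$ divided by rational factors, multiplied by a boundary factor of the form $\alpha/(\alpha - z)$ coming from the diagonal weights. This exact formula is the starting point, derived in the body of \cite{baik2017pfaffian} from which the statement is quoted.

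Next I would carry out a steepest descent analysis of this kernel in the regime $m = \kappa n + s n^{2/3-\epsilon}$. Writing the phase as $z \mapsto G(z) - \kappa \log z - (1+\sqrt{\kappa})^2 z$, one checks that it has a double critical point at $z_c = 1/(1+\sqrt{\kappa})$, and the rescaling $x = (1+\sqrt{\kappa})^2 n + \sigma n^{1/3} \tilde{x}$, $z = z_c + n^{-1/3}\tilde{z}$, $w = z_c + n^{-1/3}\tilde{w}$ turns the cubic term in the Taylor expansion into an Airy integrand with precisely the stated constant $\sigma = (1+\sqrt{\kappa})^{4/3}/\sqrt{\kappa}^{1/3}$. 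The crucial role of the hypothesis $\alpha > \sqrt{\kappa}/(1+\sqrt{\kappa})$ is to ensure that the pole $z = \alpha$ of the boundary factor lies strictly to the right of $z_c$, so that the steepest descent contour can be drawn through $z_c$ without crossing that pole; hence the boundary contributes no residue and its effect becomes exponentially suppressed in $n$.

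After establishing pointwise convergence, I would bound the integrand by Gaussian tails on the chosen descent contour to obtain dominated convergence of the Fredholm Pfaffian expansion, using the standard decay of $\Ai$. In the limit, the off-diagonal and sub-critical entries of the $2\times 2$ matrix kernel decay to zero while one entry converges (after conjugation) to the Airy kernel $K_{\Ai}(\tilde{x},\tilde{y}) = \int_0^\infty \Ai(\tilde{x}+u)\Ai(\tilde{y}+u)\,du$. A short algebraic check then shows that the resulting degenerate Fredholm Pfaffian collapses to the Fredholm determinant $\det(I - K_{\Ai})_{\mathbb{L}^2(\tilde{x},\infty)} = F_{\mathrm{GUE}}(\tilde{x})$. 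An alternative, more probabilistic route would couple half-space LPP to a full-space LPP in a strip by showing that, under the hypothesis on $\alpha$, the optimal path from $(1,1)$ to $(n,m)$ avoids the diagonal with probability tending to one; one would then quote Johansson's GUE result for the usual exponential LPP.

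The main technical obstacle is the uniform control along the descent contour: one must verify that $\alpha/(\alpha - z)$ remains bounded (and not exponentially growing) along the chosen contour uniformly in $n$, and that the contribution from outside a window of size $n^{-1/3}$ around $z_c$ is negligible at every level in the Fredholm expansion. The secondary subtlety is the collapse of the Pfaffian to the GUE determinant; this collapse is genuinely an \emph{off-diagonal} phenomenon, since if $\kappa \to 1$, or if $\alpha$ falls below the threshold, the corresponding entries of the matrix kernel no longer become negligible and one recovers the GSE/GOE/Gaussian trichotomy of Theorem \ref{theo:LPPdiagointro} instead.
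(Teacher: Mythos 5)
This theorem is quoted from \cite[Theorem 1.5]{baik2017pfaffian} and is not reproved in the present paper, so there is no in-paper proof to compare against; your overall route (Fredholm Pfaffian from the Pfaffian Schur process, steepest descent at a double critical point, and the hypothesis on $\alpha$ keeping the boundary pole away from the critical point) matches the strategy of that reference.

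There is, however, a genuine error in your description of how the Fredholm Pfaffian degenerates to the GUE Fredholm determinant. For a $2\times 2$ skew-symmetric matrix kernel $\kernel$ with entries $\kernel_{11}, \kernel_{12}, \kernel_{21}, \kernel_{22}$, the Fredholm Pfaffian $\Pf(J-\kernel)$ reduces to a Fredholm determinant when the \emph{diagonal} entries $\kernel_{11}$ and $\kernel_{22}$ vanish; one then has, by the block Pfaffian identity, $\Pf(J-\kernel) = \det(I-\kernel_{12})$, using that skew-symmetry already forces $\kernel_{21}(x,y) = -\kernel_{12}(y,x)$. In your sketch you wrote that the off-diagonal entries of the $2\times 2$ matrix kernel decay to zero while one entry converges to the Airy kernel, which has the roles reversed: it is the off-diagonal entry $\kernel_{12}$ that must \emph{survive} and tend to the Airy kernel, while the diagonal entries $\kernel_{11},\kernel_{22}$ must vanish. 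This is not a cosmetic slip --- a surviving diagonal entry is exactly what produces a genuine Pfaffian answer of GOE or GSE type, so identifying the wrong block to kill would lead you to the wrong limiting statistics. The asymptotic analysis in \cite{baik2017pfaffian} away from the diagonal is organized precisely around showing that the $\kernel_{11}$ and $\kernel_{22}$ contributions are exponentially suppressed when $m/n \to \kappa < 1$ and $\alpha$ exceeds the threshold, while $\kernel_{12}$ converges (after conjugation) to the Airy kernel.

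Your alternative probabilistic route --- reducing to full-space LPP by arguing the geodesic to $(n,\kappa n)$ avoids the diagonal --- is a genuinely different approach from the one taken in \cite{baik2017pfaffian}. It is a sensible idea, but it requires a separate transversal-fluctuation or path-localization estimate showing that under the hypothesis $\alpha > \sqrt{\kappa}/(1+\sqrt{\kappa})$ the geodesic stays at distance $\gg n^{2/3}$ from the diagonal with high probability, so that the diagonal weights are never used; the contour-integral proof bypasses this extra geometric input entirely.
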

In \cite{baik2017pfaffian}, we also explained how to obtain a two dimensional crossover between all the above cases by tuning the parameters $\alpha$ and $\kappa$ close to their critical value in the scale $n^{-1/3}$ (see Figure \ref{fig:phasediagram}). The proofs of the following results were omitted in \cite{baik2017pfaffian} (they were stated as Theorem 1.8 and 1.9 in \cite{baik2017pfaffian}) and we include them in Section \ref{sec:crossoverasymptotics}. 
\begin{figure}
\begin{tikzpicture}[scale=0.63]
\fill[fill=gray!15] (0,0) -- plot [smooth, domain=0:4.5]  (\x, {4/(sqrt(4*\x+1)+1)}) --  (4.5, 0) -- cycle;

\draw[ gray, >=stealth'] (0,-0.2) -- (0,5);
\draw[ gray, >=stealth'] (-0.2,0) -- (5, 0);
\fill[] (0,2) circle(0.07);
\draw[thick] plot [smooth, domain=0:4.5]  (\x, {4/(sqrt(4*\x+1)+1)});

\draw (-1, 0) node {{\footnotesize $\alpha=0$}};
\draw (-1, 2) node {{\footnotesize $\alpha=1/2$}};
\draw (-1, 5) node {{\footnotesize $\alpha=+\infty$}};

\draw (0, -0.5) node {{\footnotesize $\frac n m=1$}};
\draw (5, -0.5) node {{\footnotesize $\frac n m=+\infty$}};

\draw (0, 3.5) node[fill=white, opacity=.6,text opacity=1]  {$\mathrm{GSE}$};
\draw (2.5, 2.5) node[fill=white, opacity=.6,text opacity=1]  {$\mathrm{GUE}$};
\draw (0.5, 0.5) node[fill=white, opacity=.6,text opacity=1]  {$\mathrm{Gaussian}$};
\draw (2.5, 1) node[fill=white, opacity=.6,text opacity=1] {$\mathrm{GOE}^2$};

\draw[ dotted] (-0.2, 1.5) -- (0.5, 1.5) -- (0.5, 2.5) -- (-0.2, 2.5) -- cycle;
\draw[ dotted] (0.5, 1.5) -- (8,-3);
\draw[ dotted] (0.5, 2.5) -- (8,7);
\draw[dotted] (8, -3) -- (16, -3) -- (16, 7) -- (8, 7) -- cycle;

\draw[gray] (5.5, 4.2) node {{\footnotesize $ \alpha=\frac{1+2^{2/3}\varpi k^{-1/3}}{2} $}};
\draw[gray] (5.5, 3.4) node {{\footnotesize $n = k + 2^{2/3}k^{2/3}\eta $}};
\draw[gray] (5.5, 2.8) node {{\footnotesize $m = k - 2^{2/3}k^{2/3}\eta $}};

\draw[->, gray, >=stealth'] (9,-2) -- (9,6);
\draw[->, gray, >=stealth'] (9,2) -- (14, 2);
\draw[thick, ->, >=stealth'] plot [domain=9:13] (\x, 11-\x);
\draw (14, 1.5) node {$ \eta $};
\draw (8.5, 5.5) node {$ \varpi $};

\draw (9, 6.5) node {$\mathrm{GSE}$};
\draw (9, 2) node[fill=white, opacity=.6,text opacity=1]  {$\mathrm{GOE}$};
\draw (9.5, -2.5) node {$\mathrm{Gaussian}$};
\draw (15, 2) node {$\mathrm{GUE}$};
\draw (13.7, -2.5) node{$\mathrm{GOE}^2$};
\draw (11.5, 3) node {$\mathcal{L}_{\rm cross}(\cdot; \varpi, \eta)$};

\draw[gray, ->, >=stealth'] (13, 5.2) -- (13.5, 5.7);
\draw (14, 6) node {$\mathrm{GUE}$};
\draw[gray, ->, >=stealth'] (12.9, 0) -- (13.5,-0.4) ;
\draw (14.2, -0.7) node {$\mathrm{GUE}$};
\end{tikzpicture}
\caption{Phase diagram of the fluctuations of $H(n,m)$ as $n\to\infty$ when $\alpha$ and the ratio $n/m$ varies. The gray area corresponds to a region of the parameter space where the fluctuations are on the scale $n^{1/2}$ and Gaussian. The bounding $\mathrm{GOE}^2$ curve asymptotes to zero as $n/m$ goes to $+\infty$. The crossover distribution $\mathcal{L}_{\rm cross}(\cdot; \varpi, \eta)$ is defined in \cite[Definition 2.9]{baik2017pfaffian} and describes the fluctuations in the vicinity of $n/m=1$ and $\alpha=1/2$.}
\label{fig:phasediagram}
\end{figure}
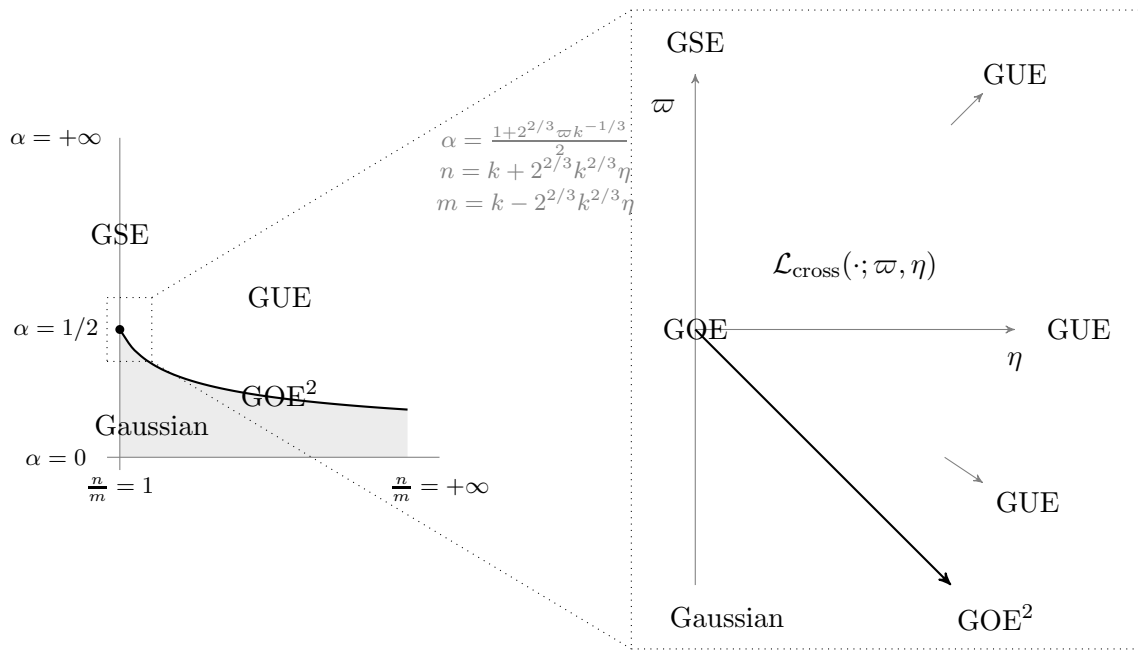
Let us define 
 $$H_n(\eta)= \frac{H\big(n + n^{2/3}\xi\eta , n -n^{2/3}\xi\eta\big)-4n + n^{1/3}\xi^2\eta^2}{\sigma n^{1/3}},$$
 where $\eta\geqslant 0$,  $\sigma = 2^{4/3}$  and $\xi=2^{2/3}$. We scale $\alpha$ as
 $$ \alpha = \frac{1+2\sigma^{-1}\varpi n^{-1/3}}{2} $$
 where  $\varpi\in\R$ is a free parameter. 
 \begin{theorem} For $0\leqslant \eta_1 < \dots < \eta_k$,  $\varpi\in\R$, 
 	$$ \lim_{n\to\infty} \PP\left(\bigcap_{i=1}^k \left\lbrace H_n(\eta_i) < h_i  \right\rbrace\right)  = \Pf\big( J- \kernel^{\rm  cross}\big)_{\mathbb{L}^2(\mathbb{D}_k(h_1, \dots, h_k))},$$
 	where $\kernel^{\rm cross}$ is defined in  Section \ref{sec:crossoverasymptotics}.
 	\label{theo:crossfluctuations}
 \end{theorem}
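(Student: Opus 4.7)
The plan is to build on the Pfaffian Schur process machinery already developed in \cite{baik2017pfaffian} and the one-point steepest descent analysis carried out there. The starting point is a finite-$n$ Fredholm Pfaffian representation of the joint distribution
\[
\PP\left(\bigcap_{i=1}^k \{H(n+n^{2/3}\xi\eta_i,\, n-n^{2/3}\xi\eta_i) < r_i\}\right) = \Pf(J-\kernel^{\rm pre})_{\mathbb{L}^2(\mathbb{D}_k(r_1,\dots,r_k))},
\]
where $\kernel^{\rm pre}$ is the explicit $2\times 2$ matrix-valued correlation kernel of the Pfaffian Schur process associated with the half-space LPP, evaluated at parameters that encode the multi-point $\big(n\pm n^{2/3}\xi\eta_i\big)$ structure and a boundary parameter tuned to $\alpha$. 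Such a multi-point formula is available in the same framework used for the one-point Theorem \ref{theo:LPPdiagointro}, with the parameter $\alpha$ entering as a perturbation of one of the Schur specializations.

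The next step is to insert the scalings $\alpha=\tfrac{1+2\sigma^{-1}\varpi n^{-1/3}}{2}$ and $m_i=n-n^{2/3}\xi\eta_i$ into the contour integral formulas of the four blocks of $\kernel^{\rm pre}$ and perform steepest descent. For $\alpha=\tfrac{1}{2}$ and $m=n$, the exponent in each integrand has a double critical point at $z_c=1$ (this is precisely what produces the GSE/GOE transition in the one-point case). Under the stated scaling, zooming in on $z=1+n^{-1/3}\tilde z$ converts the cubic behaviour of the exponent into the Airy-type expression $\tfrac{\tilde z^3}{3}-\xi^2\eta_i^2 \tilde z + \xi h_i \tilde z$, while the factors carrying $\alpha$ produce the additional $\varpi$-dependent rational pre-factors $(z-\alpha)^{-1}$, which in the limit contribute the shifts $(\tilde z - \varpi)^{-1}$ appearing in $\kernel^{\rm cross}$. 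Performing this analysis block-by-block, and choosing the contours to be the images of vertical lines through the critical point so that all $\eta_i$-exponentials simultaneously decay, yields pointwise convergence of the rescaled kernel to $\kernel^{\rm cross}$ defined in Section \ref{sec:crossoverasymptotics}.

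To upgrade pointwise convergence of the kernel to convergence of the Fredholm Pfaffian one needs uniform tail bounds of the form $|\kernel^{\rm pre}_{ab}(x,y)| \le C e^{-c(x+y)}$ on the rescaled variables, uniformly in $n$. These are obtained by checking, along the chosen descent contours, that the real part of the exponent behaves like $-c|\tilde z|^3$ away from the critical point and is suitably controlled uniformly in $\eta_i$ ranging over the compact set $\{\eta_1,\dots,\eta_k\}$. Once this domination is in place, the Hadamard bound for Pfaffian series (used in the same manner as in the one-point proof of Theorem \ref{theo:LPPdiagointro}) allows to pass to the limit in each term of the Fredholm Pfaffian series and then exchange limit and sum.

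The main obstacle is the uniform estimate on the four kernel blocks across all $\eta_i$ simultaneously, because the shifts $n\pm n^{2/3}\xi\eta_i$ introduce exponential factors of different signs in different blocks, and the contours must be deformed so that every such factor decays without crossing the poles at $z=\alpha$ (which themselves migrate to the critical point at the $n^{-1/3}$ scale). The careful bookkeeping of these poles--in particular, ensuring that the contours for the $\eta_i<\eta_j$ ordering do not cross each other--is the delicate combinatorial/complex-analytic step; the rest of the argument is a direct extension of the one-point analysis of \cite{baik2017pfaffian}.
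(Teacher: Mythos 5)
Your proposal follows essentially the same route as the paper: start from the exact $k$-point Fredholm Pfaffian formula for half-space exponential LPP (Proposition \ref{prop:kernelexponential}), rescale the kernel, run steepest descent around the double critical point with $\alpha$ tuned so that the boundary pole sits in the $n^{-1/3}$ critical window, and conclude by pointwise kernel convergence together with uniform bounds and a Hadamard-type estimate. Two points deserve correction, though neither derails the argument.

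First, your claimed uniform tail bound $|\kernel^{\rm pre}_{ab}(x,y)|\le Ce^{-c(x+y)}$ cannot hold for all four blocks: the $22$ entry of the rescaled kernel contains non-decaying contributions (e.g.\ the residue/sign terms in $\Rkernel_{22}$, which in the limit produce pieces like $-\tfrac{1}{4}\sgn(x-y)$), so the best one gets is $|\kernel_{22}|<C$ and $|\kernel_{12}|<Ce^{-cx}$, as in Lemma \ref{lem:expobound}. This is why the paper invokes the asymmetric Hadamard bound of Lemma \ref{lem:hadamard} (with exponents $a>b\geqslant 0$), which exploits the Pfaffian structure so that decay of the $11$ and $12$ entries compensates for the bounded $22$ entry. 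If you insist on proving $e^{-c(x+y)}$ decay of the $22$ block you will fail. Second, your limiting exponent $\tfrac{\tilde z^3}{3}-\xi^2\eta_i^2\tilde z+\xi h_i\tilde z$ omits the term quadratic in $\tilde z$ proportional to $\eta_i$: the off-diagonal displacement $n\pm n^{2/3}\xi\eta_i$ contributes $n^{2/3}\xi\eta_i\log(1-4z^2)\approx -\eta_i\tilde z^2$ under the $n^{-1/3}$ zoom, and it is precisely this term (absorbed by the translation $z\mapsto z+\eta_i$) that produces the multi-point, $\eta$-shifted structure of $\kernel^{\rm cross}$. Without it you would only recover a one-point kernel. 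With these two corrections your outline matches the paper's proof.
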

We refer to \cite[Sections 1.5 and 2.5]{baik2017pfaffian} for comments and explanations  about this kernel and its various degenerations. The phase diagram of one-point fluctuations is represented on Figure \ref{fig:phasediagram}.

 In the case when $\alpha>1/2$ is fixed, the joint distribution of passage-times is governed by the so-called symplectic-unitary transition \cite{forrester1999correlations}.
 \begin{theorem}
 	For $\alpha>1/2$ and $0< \eta_1< \dots <\eta_k$, 
 	we have that
 	$$ \lim_{n\to\infty} \PP\left(\bigcap_{i=1}^k \left\lbrace H_n(\eta_i) < h_i  \right\rbrace\right) = \Pf\big( J- \kernel^{\rm  SU}\big)_{\mathbb{L}^2(\mathbb{D}_k(h_1, \dots, h_k))},$$
 	where $\kernel^{\rm SU}$ is a certain matrix kernel introduced in \cite{baik2017pfaffian} (See also Section \ref{sec:crossoverasymptotics}).
 	\label{theo:SU}
 \end{theorem}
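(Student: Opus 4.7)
The plan is to derive Theorem \ref{theo:SU} by performing a steepest descent analysis on a pre-limit Fredholm Pfaffian formula for the joint distribution of the passage times $H(n + n^{2/3}\xi\eta_i, n - n^{2/3}\xi\eta_i)$. The starting point is the multipoint formula for the Pfaffian Schur process that underlies half-space geometric/exponential LPP as developed in \cite{baik2017pfaffian}: after a $q\to 1$ degeneration and the identification of $H(n,m)$ with a marginal of the Pfaffian Schur process, one obtains an identity of the form
$$\PP\left(\bigcap_{i=1}^{k} \{H(n_i,m_i) \leq r_i\}\right) = \Pf(J-\kernel)_{\mathbb{L}^2(\mathbb{D}_k(r_1,\dots,r_k))},$$
where $\kernel$ is an explicit $2\times 2$ matrix kernel whose entries are given by double contour integrals depending on the boundary parameter $\alpha$ and on $(n_i,m_i)$. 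This is the same pre-limit object already used to prove Theorem \ref{theo:crossfluctuations}; only the scaling regime of $\alpha$ differs.

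First I would substitute the scaling $n_i = n+n^{2/3}\xi\eta_i$, $m_i=n-n^{2/3}\xi\eta_i$, and $r_i = 4n - n^{1/3}\xi^2\eta_i^2 + \sigma n^{1/3} h_i$, and conjugate the kernel by the standard gauge transformation $f(x)\mapsto e^{c x}f(x)$ to remove the shift $4n$ from the exponentials and cast each entry in Laplace form. A saddle point analysis then identifies the critical point $z_c$ of the action $S(z) = \log(1+z) - \log z - \log(1+z_c) + \log z_c$ (the same critical point that drives Theorems \ref{theo:LPPdiagointro} and \ref{theo:LPPawaydiagointro}), and one deforms the contours so that they pass through $z_c$ at the prescribed steepest descent angle. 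Rescaling the integration variables on the $n^{-1/3}$ scale turns the cubic Taylor expansion of $S$ into the Airy-type exponential governing $\kernel^{\rm SU}$, while the shifts $\xi\eta_i$ become linear phases and the thresholds $h_i$ become the $\mathbb{D}_k$ cut-offs.

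The key structural point distinguishing the SU regime from the crossover of Theorem \ref{theo:crossfluctuations} is the behavior of the boundary factor $\alpha/(\alpha - z)$ (and its symmetric partner) appearing in the entries of $\kernel$. With $\alpha>1/2$ fixed, this factor has its pole at $\alpha \neq z_c = 1$, so the deformed contours can be arranged to avoid the pole entirely, and the factor tends uniformly to a non-singular function of the rescaled variables. This is exactly the mechanism by which the extra boundary contribution collapses onto the "symplectic" piece and $\kernel$ limits to $\kernel^{\rm SU}$, in contrast to the crossover case where the pole sits at distance $O(n^{-1/3})$ from the critical point and yields the additional rank-one correction.

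The main obstacle, as in all such Pfaffian asymptotic analyses, is upgrading pointwise convergence of the $2\times 2$ matrix kernel entries to convergence of the Fredholm Pfaffian. This requires uniform-in-$n$ decay estimates on the entries along the rescaled contours and on $\mathbb{D}_k$, with an integrable dominating envelope that controls all four entries simultaneously and accommodates the multipoint parameters $\eta_i$ (each of which contributes a linear term in the exponent). These bounds can be produced by the same moderate-deviations estimates used in \cite{baik2017pfaffian} for the diagonal/off-diagonal one-point results, extended to the multipoint setting by choosing contours uniformly in the $\eta_i$ and checking that the real part of $S$ has the required monotonicity along each descent arc. Once these dominating bounds are in place, dominated convergence interchanges the limit with the Fredholm Pfaffian expansion and produces $\Pf(J-\kernel^{\rm SU})$ on the right-hand side.
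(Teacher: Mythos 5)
Your proposal follows essentially the same route as the paper: start from the pre-limit Fredholm Pfaffian formula of Proposition \ref{prop:kernelexponential}, rescale the kernel under the stated scalings, perform a Laplace/steepest-descent analysis at the double critical point of $f(z)=-4z+\log(1+2z)-\log(1-2z)$ using the $\alpha>1/2$ form of the kernel (where the boundary pole stays at distance $O(1)$ from the critical point, which is exactly why the limit is $\kernel^{\rm SU}$ rather than $\kernel^{\rm cross}$), and then upgrade pointwise convergence to convergence of the Fredholm Pfaffian via the exponential bounds of Lemma \ref{lem:expobound} combined with the Hadamard-type estimate of Lemma \ref{lem:hadamard} and dominated convergence. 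This matches the paper's proof, which carries out the argument of Theorem \ref{theo:crossfluctuations} in the degenerate case $\varpi=+\infty$.
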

 Theorem \ref{theo:SU} corresponds to the  $ \varpi\to+\infty$ degeneration of Theorem \ref{theo:crossfluctuations}.

\subsection*{Outline of the paper}

In Section \ref{sec:defdistributions}, we provide the precise definitions of all probability distributions arising in this paper.
In Section \ref{sec:facilitated}, we explain the mapping between FTASEP and TASEP on a half-space with a source, or equivalently exponential LPP on a half-space. We prove  the limit theorems for the fluctuations of particles positions in FTASEP$(\alpha)$  using the asymptotic results for half-space LPP.
In Section \ref{sec:kpointdistribution}, we  recall the $k$-point distribution along space-like paths in half-space LPP with exponential weights (Proposition \ref{prop:kernelexponential}), derived in \cite{baik2017pfaffian}. In Section \ref{sec:crossoverasymptotics}, we provide a rigorous derivation of Theorem \ref{theo:crossfluctuations} and  \ref{theo:SU} from Proposition \ref{prop:kernelexponential}. This boils down to an asymptotic analysis of the correlation kernel that was omitted in \cite{baik2017pfaffian}. 

\subsection*{Ackowledgements}
We graciously acknowledge Zongrui Yang's help in catching and fixing some mistakes in our formulas for the crossover kernel in Section \ref{sec:crossoverasymptotics} of this paper (see the preface of Section \ref{sec:crossoverasymptotics} for further information). G.B and I.C. would like to thank Sidney Redner for drawing their attention to the Facilitated TASEP. Part of this work was done during the stay of J.B, G.B and I.C. at the Kavli Institute of Theoretical Physics and supported by the National Science Foundation under Grant No. NSF PHY11-25915. J.B. was supported in part by NSF grant DMS-1361782, DMS- 1664692 and DMS-1664531, and the Simons Fellows program. G.B. was partially supported by the Laboratoire de Probabilit\'es et Mod\`eles Al\'eatoires UMR CNRS 7599, Universit\'e Paris-Diderot--Paris 7  and the Packard Foundation through I.C.'s Packard Fellowship. I.C. was partially supported by the NSF through DMS-1208998 and DMS-1664650, the Clay Mathematics Institute through a Clay Research Fellowship, the Institute Henri Poincaré through the Poincaré Chair, and the Packard Foundation through a Packard Fellowship for Science and Engineering.

\section{Definitions of distribution functions}
\label{sec:defdistributions}
In this section, we provide definitions of the probability distributions arising in the paper.
\begin{definition}
The exponential distribution with rate $\alpha\in(0,+\infty)$, denoted $\mathcal{E}(\alpha)$,  is the probability distribution on $\R_{>0}$ such that if $X\sim \mathcal{E}(\alpha)$,
$$ \forall x\in \R_{>0}, \ \PP(X>x) = e^{-\alpha x}. $$
 \label{def:expdistribution}
\end{definition}
Let us introduce a convenient notation that we use throughout the paper to specify integration contours in the complex plane.
\begin{definition}
Let $\mathcal{C}_{a}^{\varphi}$ be the union of two semi-infinite rays departing $a\in\C$ with angles $\varphi$ and $-\varphi$. We  assume that the contour is oriented from $a+\infty e^{-i\varphi}$ to $a+\infty e^{+i\varphi}$.
\label{def:basicrays}
\end{definition}

We recall that for an integral operator $\mathcal{K}$ defined by a kernel $\mathsf{K}:\mathbb{X}\times \mathbb{X} \to \R,$
its Fredholm determinant $ \det(I+\mathcal{K})_{\mathbb{L}^2(\mathbb{X},\mu)}$ is given by the series expansion
$$ \det(I+\mathcal{K})_{\mathbb{L}^2(\mathbb{X},\mu)} = 1+\sum_{k=1}^{\infty}\frac{1}{k!} \int_{\mathbb{X}} \dots \int_{\mathbb{X}} \det\big( \mathsf{K}(x_i, x_j)\big)_{i,j=1}^k\ \mathrm{d}\mu^{\otimes k}(x_1 \dots x_k), $$
whenever it converges. Note that we will omit the measure $\mu$ in the notations and write simply  $\mathbb{L}^2(\mathbb{X})$ when the uniform or the Lebesgue  measure is considered. With a slight abuse of notations, we will also write $ \det(I+\mathsf{K})_{\mathbb{L}^2(\mathbb{X})}$ instead of $ \det(I+\mathcal{K})_{\mathbb{L}^2(\mathbb{X})}$.

\begin{definition}
The GUE Tracy-Widom distribution, denoted $\mathcal{L}_{\rm GUE}$ is a probability distribution on $\R$ such that if $X\sim \mathcal{L}_{\rm GUE}$,
$$ \PP(X\leqslant x) = F_{\rm GUE}(x) = \det(I-\kernel_{\rm Ai})_{\mathbb{L}^2(x,+\infty )} $$
where  $\kernel_{\rm Ai}$ is the Airy kernel,
\begin{equation}
\kernel_{\rm Ai} (u, v) = \int_{\mathcal{C}_{-1}^{2\pi/3}} \frac{\mathrm{d}w}{2\I\pi} \int_{\mathcal{C}_1^{\pi/3}} \frac{\mathrm{d}z}{2\I\pi} \frac{e^{z^3/3-zu}}{e^{w^3/3-wv}}\frac{1}{z-w}.
\label{eq:defairykernel}
\end{equation}
\label{def:GUEdistribution}
\end{definition}

In order to define the GOE and GSE distribution in a form which is convenient for later purposes, we introduce the concept of Fredholm Pfaffian.  
\begin{definition}[{\protect\cite[Section8]{rains2000correlation}}]
For a $2\times 2$-matrix valued  skew-symmetric kernel,
$$ \kernel(x,y) = \begin{pmatrix}
\kernel_{11}(x,y) & \kernel_{12}(x,y)\\
\kernel_{21}(x, y) & \kernel_{22}(x,y)
\end{pmatrix},\ \ x,y\in \mathbb{X},$$
we define its Fredholm Pfaffian by the series expansion
\begin{equation}
\Pf\big(J+\kernel \big)_{\mathbb{L}^2(\mathbb{X},\mu)} = 1+\sum_{k=1}^{\infty} \frac{1}{k!}
\int_{\mathbb{X}} \dots \int_{\mathbb{X}}  \Pf\Big( K(x_i, x_j)\Big)_{i,j=1}^k \mathrm{d}\mu^{\otimes k}(x_1 \dots x_k),
\label{eq:defFredholmPfaffian}
\end{equation}
provided the series converges, and we recall that for an skew-symmetric  $2k\times 2k$ matrix $A$, its Pfaffian is defined by
\begin{equation}
 \Pf(A) = \frac{1}{2^k k!} \sum_{\sigma\in\mathcal{S}_{2k}} \sign(\sigma) a_{\sigma(1)\sigma(2)}a_{\sigma(3)\sigma(4)} \dots a_{\sigma(2k-1)\sigma(2k)}.
 \label{def:pfaffian}
\end{equation}
The kernel $J$ is defined by 
$$ J(x,y)  = \begin{cases}
\begin{pmatrix}
0 & 1\\-1 & 0
\end{pmatrix} & \text{ if }x=y, \\
0& \text{ if }x\neq y.
\end{cases}$$
\label{def:FredholmPfaffian}
\end{definition}
In Sections \ref{sec:crossoverasymptotics}, we will need to control the convergence of Fredholm Pfaffian series expansions. This can be done using Hadamard's bound.
\begin{lemma}[{\protect\cite[Lemma 2.5]{baik2017pfaffian}}]
Let $\kernel(x, y)$ be a $2\times 2$ matrix valued skew symmetric kernel. Assume that there exist constants $C>0$ and constants $a>b\geqslant 0$ such that
\begin{equation*}
\vert \kernel_{11}(x, y) \vert <C e^{-ax-ay}, \ \
\vert \kernel_{12}(x, y)\vert  = \vert \kernel_{21}(y,x)\vert  <C e^{-ax+by}, \ \
\vert \kernel_{22}(x, y)\vert  <C e^{bx+by}.
\end{equation*}
Then, for all $k\in \Z_{>0}$,
$$ \Big\vert \Pf\big[ \kernel(x_i, x_j)\big]_{i,j=1}^k \Big\vert < (2k)^{k/2}C^k\prod_{i=1}^{k}e^{-(a-b) x_i}.$$
\label{lem:hadamard}
\end{lemma}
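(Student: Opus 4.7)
The plan is to use the classical identity $\Pf(M)^2 = \det(M)$ for a $2k \times 2k$ skew-symmetric matrix $M$, combined with Hadamard's inequality on the determinant. Here $M$ is the block matrix $M = \bigl[K(x_i,x_j)\bigr]_{i,j=1}^k$, which is skew-symmetric by hypothesis. Reducing to a determinant bound is the right move because the entrywise exponential decay/growth prescribed in the hypotheses is tailor-made for a Hadamard-type row norm estimate, once we straighten out the mixed behaviour in the four blocks by diagonal conjugation.

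First I would introduce the diagonal scaling $D = \mathrm{diag}(e^{ax_1}, e^{-bx_1}, e^{ax_2}, e^{-bx_2}, \dots, e^{ax_k}, e^{-bx_k})$ on $\C^{2k}$. A direct check using the four entrywise bounds shows that the conjugate matrix $\widetilde M := D M D$ has every entry bounded by $C$ in absolute value: the factor $e^{ax_i}$ attached to the odd rows kills the $e^{-ax_i}$ appearing in $\kernel_{11}$ and $\kernel_{12}$, and the factor $e^{-bx_i}$ attached to the even rows kills the $e^{bx_i}$ appearing in $\kernel_{21}$ and $\kernel_{22}$; symmetric cancellations happen on the column side. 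Thus $\lvert \widetilde M_{p,q}\rvert \leq C$ for all $p,q \in \{1,\dots,2k\}$.

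Next, Hadamard's inequality gives $\bigl\lvert \det \widetilde M \bigr\rvert \leq \prod_{p=1}^{2k} \lVert r_p \rVert_2 \leq \prod_{p=1}^{2k} \sqrt{2k}\,C = (2k)^k C^{2k}$, where $r_p$ is the $p$-th row of $\widetilde M$. Combining with $\det \widetilde M = (\det D)^2 \det M$ and $\det D = \prod_{i=1}^k e^{(a-b)x_i}$ yields
\begin{equation*}
\lvert \det M \rvert \leq (2k)^k C^{2k} \prod_{i=1}^k e^{-2(a-b)x_i}.
\end{equation*}
Taking square roots, using $\lvert \Pf(M)\rvert = \sqrt{\lvert \det M\rvert}$ (which is a formal identity for skew-symmetric matrices valid over $\C$), gives the desired bound $\lvert \Pf(M)\rvert \leq (2k)^{k/2} C^k \prod_i e^{-(a-b)x_i}$.

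The only mildly delicate point is the simultaneous handling of the four blocks: one must choose the scaling $D$ so that the exponential factors in all of $\kernel_{11}, \kernel_{12}, \kernel_{21}, \kernel_{22}$ are neutralised at once, which is why the alternating pattern $(e^{ax_i}, e^{-bx_i})$ works whereas a uniform scaling would not. The hypothesis $a > b \geq 0$ is not used to make the cancellation happen; it is only used in the final statement, via $e^{-(a-b)x_i}$, to guarantee that the bound decays in $x_i$, which is what will later make the Fredholm Pfaffian series absolutely convergent. Everything else is routine.
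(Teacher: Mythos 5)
Your proof is correct, and the diagonal conjugation $D\,M\,D$ followed by Hadamard's inequality on $\det(DMD)$ and the identity $\Pf(M)^2=\det(M)$ is the standard route (the present paper only cites this lemma from \cite{baik2017pfaffian}, where the argument proceeds along the same lines). The bookkeeping is all in order: the alternating weights $(e^{ax_i},e^{-bx_i})$ do render every entry of $DMD$ bounded by $C$, $\det D=\prod_i e^{(a-b)x_i}$, and taking square roots recovers the stated $(2k)^{k/2}C^k\prod_i e^{-(a-b)x_i}$.
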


\begin{definition}
The GOE Tracy-Widom distribution, denoted $\mathcal{L}_{\rm GOE}$, is a continuous probability distribution on $\R$ whose  cumulative distribution function $F_{\rm GOE}(x)$ (i.e. $\PP(X\leqslant x)$ where $X\sim \mathcal{L}_{\rm GOE}$) is given by 
$$ F_{\rm GOE}(x)= \Pf\big( J- \kernel^{\rm GOE}\big)_{\mathbb{L}^2(x, \infty)},$$
where $\kernel^{\rm GOE}$ is the $2\times 2$ matrix valued kernel defined by
\begin{align*}
\kernel_{11}^{\rm GOE}(x,y) &=  \int_{\mathcal{C}_{1}^{\pi/3}}\frac{\mathrm{d}z}{2\I\pi}\int_{\mathcal{C}_{1}^{\pi/3}}\frac{\mathrm{d}w}{2\I\pi} \frac{z-w}{z+w} e^{z^3/3 + w^3/3 - xz -yw},\\
\kernel_{12}^{\rm GOE}(x,y) &= -\kernel_{21}^{\rm GOE}(x,y) = \int_{\mathcal{C}_{1}^{\pi/3}}\frac{\mathrm{d}z}{2\I\pi}\int_{\mathcal{C}_{-1/2}^{\pi/3}}\frac{\mathrm{d}w}{2\I\pi}  \frac{w-z}{2w(z+w)} e^{z^3/3 + w^3/3 - xz -yw} ,\\
\kernel_{22}^{\rm GOE}(x,y) &=  \int_{\mathcal{C}_{1}^{\pi/3}}\frac{\mathrm{d}z}{2\I\pi}\int_{\mathcal{C}_{1}^{\pi/3}}\frac{\mathrm{d}w}{2\I\pi}  \frac{z-w}{4zw(z+w)} e^{z^3/3 + w^3/3 - xz -yw} \\
& + \int_{\mathcal{C}_{1}^{\pi/3}}\frac{\mathrm{d}z}{2\I\pi} \frac{e^{z^3/3-zx}}{4z} -  \int_{\mathcal{C}_{1}^{\pi/3}} \frac{\mathrm{d}z}{2\I\pi} \frac{e^{z^3/3-zy}}{4z} -\frac{\sgn{(x-y)}}{4},
\end{align*}
where $\sgn{(x)} = \mathds{1}_{x>0} - \mathds{1}_{x<0}$.
\label{def:GOEdistribution}
\end{definition}

\begin{definition}
The GSE Tracy-Widom distribution, denoted $\mathcal{L}_{\rm GSE}$, is a continuous probability distribution on $\R$ whose  cumulative distribution function $F_{\rm GOE}$ is given by 
$$ F_{\rm GSE}\left( x\right) = \Pf\big( J- \kernel^{\rm GSE}\big)_{\mathbb{L}^2(x, \infty)} ,$$
where $\kernel^{\rm GSE}$ is a $2\times 2$-matrix valued kernel
defined by
\begin{align*}
\kernel_{11}^{\rm GSE}(x,y) &=  \int_{\mathcal{C}_{1}^{\pi/3}}\frac{\mathrm{d}z}{2\I\pi}\int_{\mathcal{C}_{1}^{\pi/3}}\frac{\mathrm{d}w}{2\I\pi} \frac{z-w}{4zw(z+w)} e^{z^3/3 + w^3/3 - xz -yw} ,\\
\kernel_{12}^{\rm GSE}(x,y) &= -\kernel_{21}^{\rm GSE}(x,y) =  \int_{\mathcal{C}_{1}^{\pi/3}}\frac{\mathrm{d}z}{2\I\pi}\int_{\mathcal{C}_{1}^{\pi/3}}\frac{\mathrm{d}w}{2\I\pi} \frac{z-w}{4z(z+w)} e^{z^3/3 + w^3/3 - xz -yw} ,\\
\kernel_{22}^{\rm GSE}(x,y) &= 
\int_{\mathcal{C}_{1}^{\pi/3}}\frac{\mathrm{d}z}{2\I\pi}\int_{\mathcal{C}_{1}^{\pi/3}}\frac{\mathrm{d}w}{2\I\pi} \frac{z-w}{4(z+w)} e^{z^3/3 + w^3/3 - xz -yw} \mathrm{d}z \mathrm{d}w.
\end{align*}
\label{def:GSEdistribution}
\end{definition}

\section{Facilitated totally asymmetric simple exclusion process}
\label{sec:facilitated}
\subsection{Definition and coupling}
\label{sec:defFTASEP}
A  configuration of particles on a subset $X$ of $\Z$ can be described either by occupation variables, i.e. a collection  $\vec{\eta} = ( \eta_x)_{x\in X}$ where $\eta_x=1$ if the site $x$ is occupied and $\eta_x=0$ else, or a vector of particle positions $\vec{x} = (x_i)_{i\in I}$ where the particles are indexed by some set $I$.  We will use both notations. 
\begin{definition}
	The FTASEP is a continuous-time Markov process defined on the state space $\lbrace 0,1\rbrace^{\Z}  $  via its Markov generator, acting on local functions $f: \lbrace 0,1\rbrace^{\Z}  \to \R$ by
	$$ L f(\vec{\eta}) = \sum_{x\in \Z}  \eta_{x-1} \eta_x (1-\eta_{x+1}) \big( f(\vec{\eta}_{x, x+1}) - f(\vec{\eta})\big),$$
	where the state $\vec{\eta}_{x, x+1}$ is obtained from $\vec{\eta}$ by exchanging occupation variables at sites $x$ and $x+1$.
\end{definition}
That this generator defines a Markov process corresponding to the particle dynamics described in the introduction can be justified, for instance, by checking the conditions of  \cite[Theorem 3.9]{liggett2005interacting}. 

We will be mostly interested in initial configurations that are right-finite, which means that there exists a right-most particle. Since the dynamics preserves the order between particles, it is convenient to alternatively describe a configuration of particles by their positions
$$ \dots <x_2 <x_1<\infty .$$
We also consider a more general version of the process where the first particle jumps at rate $\alpha$, while all other particles jump at rate $1$, and denote this process FTASEP($\alpha$).  Let us define state spaces corresponding to configurations of particles in FTASEP($\alpha$) where the distance between consecutive particles is at most $2$:
$$ \mathbb{X}_{>0} := \Big\lbrace (x_i)_{i\in \Z_{>0}} \in \Z^{ \Z_{>0}}:\  \forall i\in \Z_{>0},\  x_i-x_{i+1}-1 \in \lbrace 0,1\rbrace \Big\rbrace,$$
and 
$$ \mathbb{X}_{} := \Big\lbrace (x_i)_{i\in \Z}\in \Z^{ \Z} :\  \forall i\in \Z,\  x_i-x_{i+1}-1 \in \lbrace 0,1\rbrace \Big\rbrace.$$
Because of the facilitation rule, it is clear that the FTASEP($\alpha$) dynamics preserve both state spaces. 
\begin{definition}
For $\alpha>0$, the FTASEP($\alpha$) is a continuous-time Markov process defined on the state space $\mathbb{X}_{>0}$  via its Markov generator, acting on local functions $f: \mathbb{X}_{>0}  \to \R$ by
$$\mathcal{ L}^{FTASEP}_{\alpha} f(\vec{x}) =  \alpha\mathds{1}_{x_1 - x_{2}=1}\big( f(\vec{x}_{1}^+) - f(\vec{x})\big) +   \sum_{i\geqslant 2}  \mathds{1}_{x_i - x_{i+1}=1}\mathds{1}_{x_{i-1}-x_i=2} \big( f(\vec{x}_{i}^+) - f(\vec{x})\big),$$
where we use the convention that the state $\vec{x}_{i}^+$ is obtained from $\vec{x}$ by incrementing by one the coordinate $x_i$.
\label{def:FTASEP}
\end{definition}
\begin{remark}
	One may similarly define FTASEP($\alpha$) on the state space $\mathbb{X}$ instead of  $\mathbb{X}_{>0}$, in order to allow initial conditions without a  rightmost particle. 
\end{remark}

In order to study FTASEP($\alpha$), we use a coupling with another interacting particle system: a TASEP with a source at the origin that injects particles at exponential rate $\alpha$. We consider configurations of particles on $\Z_{>0}$ where each site can be occupied by at most one particle, and each particle jumps to the right by one at exponential rate $1$, provided the target site is empty. At site $0$ sits an infinite source of particles, which means that a particle always jumps to site $1$ at exponential rate $\alpha$ when the site $1$ is empty (See Figure \ref{fig:OpenTASEP}). We will denote the occupation variables in half-line TASEP by $g_i(t)$ (equals $1$ if site $i$ is occupied, $0$ else). 
\begin{definition}
The half-line TASEP with open boundary condition is a continuous-time Markov process defined on the state space $\lbrace 0,1\rbrace^{\Z_{>0}}  $  via its Markov generator, acting on local functions $f: \lbrace 0,1\rbrace^{\Z_{>0}}  \to \R$ by
\begin{multline*} \mathcal{ L}^{half}_{\alpha} f(\vec{g}) = \alpha \big( f(1, g_2, g_3, \dots) - f(g_1, g_2, \dots)\big) \\ +  \sum_{x\in \Z_{>0}} g_x (1-g_{x+1}) \big( f(\vec{g}_{x, x+1}) - f(\vec{g})\big),
\end{multline*}
where the state $\vec{g}_{x, x+1}$ is obtained from $\vec{g}$ by exchanging occupation variables at sites $x$ and $x+1$.
We define the integrated current $N_x(t)$ as the number of particles on the right of site $x$ (or at site $x$) at time $t$.
\label{def:halfTASEP}
\end{definition}

\begin{figure}
	\centering 
\begin{tikzpicture}[scale=0.6]
\draw[thick] (-1.2, 0) circle(1.2);
\draw (-1.2,0) node{source};
\draw[thick] (0, 0) -- (12.5, 0);
\foreach \x in {1, ..., 12} {
	\draw (\x, 0.15) -- (\x, -0.15) node[anchor=north]{\footnotesize $\x$};
}
\fill[thick, gray] (2, 0) circle(0.2);
\fill[thick] (3, 0) circle(0.2);
\fill[thick, gray] (6, 0) circle(0.2);
\fill[thick] (7, 0) circle(0.2);
\fill[thick] (9, 0) circle(0.2);
\draw[thick, ->] (3, 0.3) to[bend left] (4, 0.3);
\draw[thick, ->] (7, 0.3) to[bend left] (8, 0.3);
\draw[thick, ->] (9, 0.3) to[bend left] (10, 0.3);
\draw[thick, ->] (-0.1, 0.3) to[bend left] (1, 0.3);
\end{tikzpicture}
\caption{Illustration of the half-line TASEP. The particles in gray cannot move because of the exclusion rule. }
\label{fig:OpenTASEP}
\end{figure}
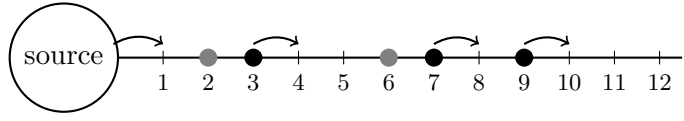

Define maps  
$$\begin{array}{lll}
\Phi_{>0} : & \mathbb{X}_{>0}  &\longrightarrow \lbrace 0,1\rbrace^{\Z_{>0}} \\ 
& (x_i)_{i\in \Z_{>0}} &\longmapsto \big( x_i-x_{i+1} -1 \big)_{i\in \Z_{>0}},
\end{array} 
$$
and 
$$\begin{array}{lll}
\Phi : & \mathbb{X}_{}  &\longrightarrow \lbrace 0,1\rbrace^{\Z_{}} \\ 
& (x_i)_{i\in \Z} &\longmapsto \big( x_i-x_{i+1} -1 \big)_{i\in \Z}.
\end{array} 
$$

\begin{proposition}
	Let $\vec{x}(t) = (x_n(t))_{n\geqslant 1}$ be the particles positions in the FTASEP($\alpha$) started from some initial condition $\vec{x}(0)\in\mathbb{X}_{>0}$ (resp. $\mathbb{X}_{}$). Then denoting $ \vec{g}(t) = \lbrace g_i(t)\rbrace_{\Z_{>0}} = \Phi(\vec{x}(t))$, the dynamics of  $\vec{g}(t)$ are those of half-line TASEP (resp. TASEP) starting from the initial configuration $\Phi_{>0}(\vec{x}(0))$ (resp.  $\Phi(\vec{x}(0))$).
	\label{prop:mapping}
\end{proposition}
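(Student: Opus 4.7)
The plan is to show that at the generator level, the image process $\vec{g}(t)=\Phi(\vec{x}(t))$ evolves as half-line TASEP (respectively TASEP). The key preliminary remark is that FTASEP$(\alpha)$ preserves $\mathbb{X}_{>0}$ (respectively $\mathbb{X}$), so the variables $g_i = x_i - x_{i+1} - 1$ remain in $\{0,1\}$ throughout the trajectory, and $\Phi_{>0}$ (respectively $\Phi$) makes sense pathwise. Thus it suffices to check the identity
\[
\big(\mathcal{L}^{FTASEP}_\alpha(\phi\circ\Phi_{>0})\big)(\vec{x}) = \big(\mathcal{L}^{half}_\alpha \phi\big)\big(\Phi_{>0}(\vec{x})\big)
\]
for any bounded local cylinder function $\phi:\{0,1\}^{\Z_{>0}}\to\R$, and then appeal to uniqueness of the Markov process associated to a generator on this state space (verifiable through the hypotheses of \cite[Theorem 3.9]{liggett2005interacting}, already invoked in the paper).

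The matching of generators is a termwise bookkeeping computation that I would carry out by decoding each FTASEP transition in gap coordinates. For $i\geq 2$, the indicator $\mathds{1}_{x_{i-1}-x_i=2}\,\mathds{1}_{x_i-x_{i+1}=1}$ is exactly the event $\{g_{i-1}=1,\; g_i=0\}$, and applying $\vec{x}_i^{\,+}$ changes the pair $(g_{i-1},g_i)$ from $(1,0)$ to $(0,1)$ while leaving every other gap untouched. Under $\Phi_{>0}$ this is precisely the half-line TASEP hop that moves a particle from site $i-1$ to the empty site $i$, firing at rate $1$. For $i=1$, the indicator $\mathds{1}_{x_1-x_2=1}$ is the event $\{g_1=0\}$, and $\vec{x}_1^{\,+}$ turns $g_1$ from $0$ to $1$ and leaves the rest fixed: this is exactly the source rule of half-line TASEP, firing at rate $\alpha$ when site $1$ is empty. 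Summing the local contributions therefore gives the claimed generator identity.

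The two-sided statement on $\mathbb{X}$ is handled by the same argument with $\Phi_{>0}$ replaced by $\Phi$; there is no distinguished rightmost particle and consequently no source term, and the pushed-forward generator is just the usual TASEP generator on $\Z$. The only (mild) subtlety in the whole argument is the boundary behaviour: one must see that the rightmost particle of FTASEP, which has no right neighbour to look at but does check its left neighbour, corresponds under $\Phi_{>0}$ precisely to a source that creates a particle at site $1$ of the gap chain rather than to a bulk hop. This is transparent once the two cases $i\geq 2$ and $i=1$ are written out separately as above, so no real obstacle is expected.
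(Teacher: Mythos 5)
Your proposal is correct and follows essentially the same route as the paper: the paper's proof is a verbal transition-by-transition matching of the FTASEP moves with the half-line TASEP moves in the gap coordinates $g_i=x_i-x_{i+1}-1$, and the generator identity you verify is exactly the one the paper records in the remark immediately following the proposition. Your bookkeeping of the two cases ($i\geqslant 2$ giving the bulk hop from site $i-1$ to site $i$, and $i=1$ giving the source at rate $\alpha$) matches the paper's argument.
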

\begin{proof}
We explain how the mapping between the two processes works in the half-space case (which corresponds to the FTASEP($\alpha$) defined on the space of configurations $\mathbb{X}_{>0}$), since this is the case we will be most interested in this paper, and the full space case is very similar. 

Assume that particles at positions $(x_i)_{i\in \Z_{>0}}$ follow the FTASEP($\alpha$) dynamics, starting from some initial condition $\vec{x}(0)\in\mathbb{X}_{>0}$,  and let us show that the $g_i = x_i-x_{i+1}-1$ follow the dynamics of the half-line TASEP occupation variables. 
	 
If $x_1=x_2+1$  (i.e. $g_1=0$), the first particle in FTASEP($\alpha$) jumps at rate $\alpha$. After it has jumped, $x_1=x_2+2$ (i.e. $g_1=1$). This corresponds to a particle arriving from the source to site $1$ in the half-line TASEP. After this jump, $x_1=x_2+2$ (i.e.  $g_1=1$), so that the first particle cannot move in the FTASEP($\alpha$) because of the facilitation rule and no particle can jump from the source in half-line TASEP. 
More generally, because of the exclusion and facilitation rules, the $(i+1)$th particle in FTASEP($\alpha$) can move only if $g_i=1$ and $g_{i+1}=0$ and does so at rate $1$. After the move, $x_{i+1}$ has increased by one so that $g_i=0$ and $g_{i+1}=1$. This exactly corresponds to the half-line TASEP dynamics. 
	 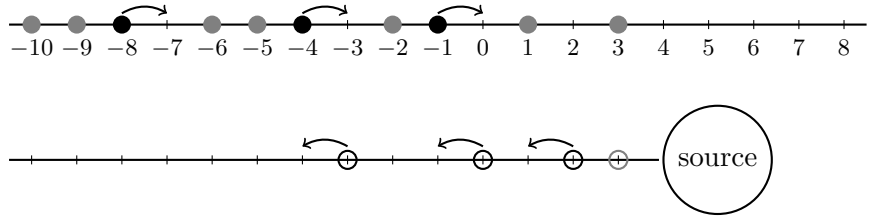
\begin{figure}
	 	\begin{tikzpicture}[scale=0.6]
	 	\draw[thick] (-9.5, 0) -- (9.5, 0);
	 	\foreach \x in {-10, ..., 8} {
	 		\draw (\x+1, 0.1) -- (\x+1, -0.1) node[anchor=north]{\footnotesize $\x$};
	 	}
	 	\fill[gray] (-9, 0) circle(0.2);
	 	\fill[gray] (-8, 0) circle(0.2);
	 	\fill (-7, 0) circle(0.2);
	 	\fill[gray] (-5, 0) circle(0.2);
	 	\fill[gray] (-4, 0) circle(0.2);
	 	\fill (-3, 0) circle(0.2);
	 	\fill[gray] (-1, 0) circle(0.2);
	 	\fill (0, 0) circle(0.2);
	 	\fill[gray] (2, 0) circle(0.2);
	 	\fill[gray] (4, 0) circle(0.2);
	 	\draw[thick, ->] (-7, 0.25) to[bend left] (-6, 0.25);
	 	\draw[thick, ->] (-3, 0.25) to[bend left] (-2, 0.25);
	 	\draw[thick, ->] (0, 0.25) to[bend left] (1, 0.25);
	 	\begin{scope}[yshift=-3cm]
	 	\draw[thick] (-9.5, 0) -- (4.9, 0);
	 	\foreach \x in {-9, ..., 4} {
	 		\draw (\x, 0.1) -- (\x, -0.1);
	 	}
	 	\draw[thick] (6.2, 0) circle(1.2);
	 	\draw (6.2,0) node{source};
	 	\draw[thick] (-2, 0) circle(0.2);
	 	\draw[thick] (1, 0) circle(0.2);
	 	\draw[thick] (3, 0) circle(0.2);
	 	\draw[thick, gray] (4, 0) circle(0.2);
	 	\draw[thick, ->] (-2, 0.3) to[bend right] (-3, 0.3);
	 	\draw[thick, ->] (1, 0.3) to[bend right] (0, 0.3);
	 	\draw[thick, ->] (3, 0.3) to[bend right] (2, 0.3);
	 	\end{scope}
	 	
	 	\end{tikzpicture}
	 	\caption{Illustration of the coupling. The dynamics of particles in the bottom picture is nothing else but the dynamics of the holes in the top picture. In order to see it more precisely, consider the holes in the top picture and shrink the distances so that the distance between two consecutive holes decreases by $1$; one gets exactly the bottom picture with the corresponding dynamics.}
	 	\label{fig:coupling}
	 \end{figure}
\end{proof}
\begin{remark}
	Formally, Proposition \ref{prop:mapping} means that for any $\vec{x} \in \mathbb{X}_{>0}$ and local function $f :\lbrace 0,1\rbrace^{\Z>0} \to \R$, 
	$$ \mathcal{ L}^{half}_{\alpha} f (\Phi(\vec x))  = \mathcal{ L}^{FTASEP}_{\alpha} \big(f \circ \Phi \big) (\vec x).$$ 
\end{remark}

In the following, we are mainly interested in FTASEP($\alpha$) starting from the step initial condition, or equivalently the half-line TASEP started from a configuration where all sites are initially empty. 
\begin{proposition}
Let $\vec{x}(t) =  (x_n(t) )\rbrace_{n\geqslant 1}$ be the particles positions in the FTASEP($\alpha$) started from step initial condition (see Definition \ref{def:FTASEP}).  Let $( N_x(t))_{x\in\Z_{>0}}$ be the currents in the half-line TASEP started from empty initial configuration (see Definition \ref{def:halfTASEP}). Then we have the equality in law of the processes
$$ \big( x_n(t)+n \big)_{n\geqslant 1, t\geqslant 0}  = \big( N_{n}(t)\big)_{n\geqslant 1, t\geqslant 0}.$$
\label{prop:coupling}
\end{proposition}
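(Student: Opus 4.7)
The plan is to apply the coupling from Proposition~\ref{prop:mapping} to the step initial condition and then derive the claimed identity pathwise via a telescoping sum.

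First I would note that step initial condition for FTASEP$(\alpha)$ reads $x_i(0) = -i$ for $i \geqslant 1$, so the image under $\Phi_{>0}$ is $g_i(0) = x_i(0) - x_{i+1}(0) - 1 = 0$ for every $i$. By Proposition~\ref{prop:mapping}, the process $\vec g(t) := \Phi_{>0}(\vec x(t))$ therefore has the law of half-line TASEP started from the empty configuration, so the quantities $g_i(t) = x_i(t)-x_{i+1}(t)-1$ are exactly the occupation variables whose currents $N_n(t)$ we must analyze.

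Second I would justify that the series $N_n(t) = \sum_{i\geqslant n} g_i(t)$ is almost surely a finite sum. On the half-line TASEP side, the total number of particles present at time $t$ is bounded above by the total number of particles injected by the source up to time $t$, which is Poisson$(\alpha t)$, hence a.s.\ finite. Equivalently on the FTASEP side, propagation goes strictly from right to left: because of the facilitation rule, particle $i$ can make its first jump only after particle $i-1$ has jumped, so the first jump time $\tau_i$ stochastically dominates a sum of $i$ independent $\mathrm{Exp}(\max(1,\alpha))$ variables. Either way, there is a random $M(t)<\infty$ such that $x_i(t) = -i$, equivalently $g_i(t)=0$, for every $i \geqslant M(t)$.

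Third, the claimed identity then follows by a one-line telescoping. For any $M \geqslant M(t)$, using $x_{M+1}(t) = -(M+1)$,
\begin{equation*}
N_n(t) \;=\; \sum_{i=n}^{M} \bigl(x_i(t) - x_{i+1}(t) - 1\bigr) \;=\; x_n(t) - x_{M+1}(t) - (M+1-n) \;=\; x_n(t) + n.
\end{equation*}
Because this identity holds pathwise on the coupled probability space, we in fact obtain joint equality of the two processes in all indices $(n,t)$, which yields the equality in law asserted in the proposition.

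The only mildly delicate step I expect is the a.s.\ finiteness of the sum, but either of the two routes above (bounding via the source Poisson process, or via stochastic domination of successive first-jump times on the FTASEP side) handles it cleanly. Everything else reduces to the telescoping computation combined with Proposition~\ref{prop:mapping}, which is already in hand.
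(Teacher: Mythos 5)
Your proof is correct and follows essentially the same route as the paper: map the step initial condition to the empty configuration via Proposition~\ref{prop:mapping}, then identify $x_n(t)+n$ with $\sum_{i\geqslant n}g_i(t)=N_n(t)$. The telescoping computation and the finiteness check are just a more explicit version of the paper's observation that $x_n(t)+n$ counts the holes to the left of the $n$-th particle.
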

\begin{proof}
Because we start from step initial condition, $x_n(t)+n$ in FTASEP($\alpha$) equals the number of holes (empty sites) on the left of the $n$th particle.
Using Proposition \ref{prop:mapping}, and denoting the occupation variables in half-line TASEP by $g_i$, we have 
$$ x_n+n \overset{(d)}{=} \sum_{i\geqslant n} g_i  = N_n,$$
jointly for all $n$ as claimed. 
\end{proof}
Let us explain how Proposition \ref{prop:coupling} enables us to quickly recover the results from \cite{gabel2010facilitated}. We later provide rigorous results substantiating many of these claims, but for the moment just proceed heuristically. Consider the case $\alpha=1$.
One expects (and we prove in the next Section \ref{sec:FTASEPfluctuations}) that the law of large numbers for the current of particles in the half-line  TASEP is the same as in TASEP. Intuitively, this is because we expect that the law of large numbers is determined by a conservation PDE (of the form \eqref{eq:PDEconservation}) which is simply the restriction to a half-space of the conservation PDE governing the hydrodynamics of TASEP on the full line. Thus,
$$ \frac{N_{\kappa t}(t)}{t} \xrightarrow[t\to\infty]{a.s.} \frac{1}{4}(1-\kappa)^2. $$
Then, Proposition \ref{prop:coupling} implies that for FTASEP,
$$ \frac{ x_{\kappa t}}{t} \xrightarrow[t\to\infty]{a.s.} \frac{1}{4}(1-\kappa)^2 -\kappa = \frac{1-6\kappa + \kappa^2}{4}.$$
One can deduce the shape of the limiting density profile from the law of large numbers of particles positions. Let $\pi(\kappa)$ be the macroscopic position of the particle indexed by $\kappa t$, i.e.
$$ \pi(\kappa) =  \frac{1-6\kappa + \kappa^2}{4}. $$
This yields $ \kappa = 3-2\sqrt{2+\pi}$ (which can be interpreted as the limit of the integrated current in the FTASEP at site $\pi t$, rescaled by $t$). The density profile is obtained by differentiating $\kappa$ with respect to $\pi$, and we get (as in \cite[Equation (5)]{gabel2010facilitated})
$$ \rho(\pi t , t) = \frac{1}{\sqrt{2+\pi}}.$$

In light of the mapping between FTASEP and half-line TASEP from Proposition \ref{prop:mapping}, it is possible to write down a family of
translation invariant stationary measures in the FTASEP. They are given by choosing   gaps between consecutive particles as i.i.d Bernoulli random variables. From these, we may also deduce  the expression for the flux from \eqref{eq:flux}. Assume that the system is at equilibrium, such that the gaps between consecutive particles are i.i.d. and distributed according to the $Bernoulli(p)$ distribution. Let us call $\nu_{p}$ this measure on $\lbrace 0,1\rbrace^{\Z}$.  Then, by the renewal theorem, the average density $\rho$ is related to $p$ via
$$ \rho = \frac{1}{1+\EE[\mathrm{gap}]} = \frac{1}{1+p}.$$
The flux $j(\rho) $  is the product of the density times the drift of one particle, and since particles jump by $1$, the drift is given by the probability of a jump for a tagged particle, i.e. $p(1-p)$. Indeed, considering a tagged particle in the stationary distribution, then its right neighbour has a probability $p$ of being empty and its left neighbour has a probability $1-p$ of being occupied. This yields
\begin{equation}
j(\rho) = \rho (1-p) p  = \frac{(1-\rho)(2\rho-1)}{\rho}.
\label{eq:flux2}
\end{equation}

\subsection{Proofs of limit theorems}
We use now the coupling from Proposition \ref{prop:coupling} to translate the asymptotic results about last passage percolation from Theorems \ref{theo:LPPdiagointro}, \ref{theo:LPPawaydiagointro}, \ref{theo:crossfluctuations} and \ref{theo:SU}  into limit theorems for the FTASEP($\alpha$).
\label{sec:FTASEPfluctuations}

	Let $\vec{x}(t) = \lbrace x_n(t) \rbrace_{n\geqslant 1}$ be the particles positions in the FTASEP($\alpha$) started from step initial condition. Using Proposition \ref{prop:coupling}, we have that for any $y\in \R$
\begin{align*}
\PP\big( x_n(t)\leqslant y \big) &=\PP\big( x_n(t)\leqslant \lfloor y\rfloor \big)\\
&=\PP\big( N_n(t)\leqslant \lfloor y\rfloor +n \big).
\end{align*}
In order to connect the problem with half-space last passage percolation, we use the next result.
\begin{lemma}
Consider the exponential LPP model in a half-quadrant where the weights on the diagonal have parameter $\alpha$, and recall the definition of last passage times $H(n,m)$ from Definition \ref{def:LPPexp}. Consider the half-line TASEP  where the source injects particles at rate $\alpha$ with empty initial configuration and recall $N_x(t)$, the current at site $x$. Then for any $t>0$ and  $n, y \in \Z_{>0}$  we have that
$$ \PP\big( N_n(t)\leqslant y \big) =\PP\big(H(n+y-1, y)\geqslant t\big).$$
\label{lem:couplinglppTASEP}
\end{lemma}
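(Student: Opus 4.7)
The plan is to use the standard Johansson-type bijection between exponential LPP and exclusion dynamics, adapted to the half-space setting. The goal is to realize both sides of the asserted identity as referring to a single random variable: the time at which a specific particle arrives at a specific site in the half-line TASEP.

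I would label the particles of the half-line TASEP in their order of emission from the source and let $T_{m,k}$ be the time at which particle $m$ first occupies site $k\ge 1$ (so $T_{m,1}$ is its emission time from the source). Using the memoryless property of the exponential distribution, $T_{m,k}$ can be built on a probability space carrying two independent i.i.d.\ families of clocks --- source clocks $S_m\sim\exp(\alpha)$ and jump clocks $U_{m,k}\sim\exp(1)$ --- via the recursion
\begin{align*}
T_{1,1} &= S_1, & T_{m,1} &= T_{m-1,2}+S_m \quad(m\ge 2),\\
T_{1,k} &= T_{1,k-1}+U_{1,k-1} \quad(k\ge 2), & T_{m,k} &= \max\{T_{m,k-1},T_{m-1,k+1}\}+U_{m,k-1} \quad(m,k\ge 2).
\end{align*}
The first two equations encode the source dynamics (particle $m$ is emitted only after site $1$ empties, after which a fresh $\exp(\alpha)$ clock runs), while the last two encode the exclusion rule (particle $m$ can enter site $k$ only once it is at $k-1$ \emph{and} particle $m-1$ has vacated site $k$).

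I would then identify the LPP lattice point $(n,m)$ with the pair (particle $m$, site $k=n-m+1$), and set $\tilde w_{m,m}:=S_m$ and $\tilde w_{n,m}:=U_{m,\,n-m}$ for $n>m$. This family has the joint distribution prescribed in Definition~\ref{def:LPPexp}, and plugging the substitution $k\mapsto n-m+1$ into the recursion above shows that $\tilde H(n,m):=T_{m,\,n-m+1}$ satisfies exactly the LPP recursion with boundary $\tilde H(n,0)=0$. Hence $\tilde H$ and $H$ agree as joint processes.

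Finally, since particle order is preserved, the event $\{N_n(t)\ge m\}$ (at least $m$ particles at sites $\ge n$) coincides with $\{T_{m,n}\le t\}$, which the bijection rewrites as $\{H(n+m-1,m)\le t\}$. Passing to complements and using that $H(n+y-1,y)$ has a continuous distribution (so that $\{\ge t\}$ and $\{>t\}$ agree almost surely), together with the usual passage between $\{N_n(t)\le y\}$ and $\{N_n(t)<y+1\}$, yields the asserted identity. The main technical obstacle is justifying the boundary case $T_{m,1}=T_{m-1,2}+S_m$: one must argue via the strong Markov / memoryless property that, conditionally on the history up to the moment when site $1$ clears after particle $m-1$ leaves, the source re-arms with a fresh $\exp(\alpha)$ clock independent of everything used so far. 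Once this boundary identification is in hand, the rest of the argument is a routine bookkeeping exercise in index matching between the two-parameter family $T_{m,k}$ and the LPP passage times.
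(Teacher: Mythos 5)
Your construction is the same standard Rost-type coupling that the paper invokes (the paper's proof is a one-sentence citation of this mapping, declaring $w_{ij}$ to be the $(i-j+1)$-th waiting time of the $j$-th particle), and your more explicit version of it is correct: the recursion for $T_{m,k}$, the identification $(n,m)\leftrightarrow(\text{particle }m,\text{ site }n-m+1)$, and the verification that $\tilde H(n,m)=T_{m,\,n-m+1}$ satisfies the half-quadrant LPP recursion all check out, and you rightly isolate the memorylessness/fresh-clock point at the source as the only step needing a genuine probabilistic argument.

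The one place where you should not wave your hands is the final index bookkeeping, because it does not in fact ``yield the asserted identity.'' From $\{N_n(t)\geqslant m\}=\{T_{m,n}\leqslant t\}=\{H(n+m-1,m)\leqslant t\}$, taking complements gives $\PP(N_n(t)\leqslant m-1)=\PP(H(n+m-1,m)>t)=\PP(H(n+m-1,m)\geqslant t)$; equivalently $\PP(N_n(t)\leqslant y)=\PP(H(n+y,y+1)\geqslant t)$. No passage between $\{N_n(t)\leqslant y\}$ and $\{N_n(t)<y+1\}$ can convert this into $\PP(N_n(t)\leqslant y)=\PP(H(n+y-1,y)\geqslant t)$: the event $\{H(n+y-1,y)\geqslant t\}$ is (up to null sets) $\{N_n(t)<y\}$, not $\{N_n(t)\leqslant y\}$. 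A concrete check: for $n=y=1$ the stated identity would give $\PP(N_1(t)\leqslant 1)=\PP(w_{1,1}\geqslant t)=e^{-\alpha t}$, whereas the left side is strictly larger since $\PP(N_1(t)=1)>0$. So the lemma as printed carries an off-by-one (immaterial for the $t\to\infty$ limit theorems it feeds into, where such unit shifts are absorbed into the $o(t^{1/3})$ errors), and your derivation actually produces the corrected version. You should state which identity your coupling proves rather than assert that routine bookkeeping lands on the printed one.
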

\begin{proof}
This is due to a standard mapping \cite{rost1981non} between  exclusion processes and last passage percolation, where the border of the percolation cluster can be interpreted as a height function for the exclusion process. More precisely, the processes have to be coupled in such a way  that the weight $w_{ij}$ in the LPP model is the $(i-j+1)^{th}$ waiting time of the $j^{th}$ particle in the half-line TASEP -- the waiting time is  counted from the moment when it can jump, and by convention the first waiting time is when it jumps from the source into the system.
\end{proof}

\subsubsection{GSE ($\alpha>1/2$) and GOE ($\alpha=1/2$) cases:}
By Theorems \ref{theo:LPPdiagointro} we have that $$ H(n,n) = 4n + \sigma n^{1/3} \chi_n, $$
where
$ \sigma = 2^{4/3} $ and $\chi_n$ is a sequence of random variables weakly converging to the GSE (divided by $\sqrt{2}$ according to the convention chosen in Definition \ref{def:GSEdistribution}) distribution when $\alpha >1/2$ and to the GOE distribution when $\alpha=1/2$.
Let $y\in\R$ be fixed and $\varsigma>0$ be a coefficient to specify later. For $t>0$, we have
\begin{align*}
\PP\left(x_1(t) \leqslant \frac{t}{4} + t^{1/3}\varsigma y \right) &=  \PP\left(H\left(\Big\lfloor\frac{t}{4}+ t^{1/3}\varsigma  y \Big\rfloor, \Big\lfloor\frac{t}{4}+ t^{1/3}\varsigma y \Big\rfloor\right)\geqslant t\right)\\
&=  \PP\left(4\left(\Big\lfloor\frac{t}{4} + t^{1/3}\varsigma y \Big\rfloor \right) + \sigma \Big\lfloor\frac{t}{4} + t^{1/3}\varsigma y \Big\rfloor^{1/3} \chi_{\big\lfloor\frac{t}{4} + t^{1/3}\varsigma y \big\rfloor} \geqslant t\right)\\
&= \PP\left( 4 \varsigma y t^{1/3} + \big(\sigma (t/4)^{1/3}+ o(t^{1/3}\big) \chi_{\big\lfloor\frac{t}{4} + t^{1/3}\varsigma y \big\rfloor} \geqslant o(t^{1/3})\right)
\end{align*}
where the $o(t^{1/3})$ errors are deterministic. Thus, if we set $\varsigma = 2^{-4/3}$, we obtain that
$$ \lim_{t\to\infty}
\PP\left(x_1(t) \geqslant \frac{t}{4} + t^{1/3}\varsigma y \right) =  \lim_{t\to\infty}\PP\left(    \chi_{\big\lfloor\frac{t}{4} + t^{1/3}\varsigma y \big\rfloor} \leqslant -y\right) =  F_{\rm GSE}(-\sqrt{2} y)$$
 when $\alpha>1/2$ and
$$ \lim_{t\to\infty}
\PP\left(x_1(t) \geqslant \frac{t}{4} + t^{1/3}y \right) =  F_{\rm GOE}( -y)$$
when $\alpha=1/2$.

\subsubsection{Gaussian case:}
By Theorem \ref{theo:LPPdiagointro} we have that $$ H(n,n) = h(\alpha)n + \sigma n^{1/2} G_n, $$
where
$h(\alpha) = \frac{1}{\alpha(1-\alpha)}$,  $ \sigma = \frac{1-2\alpha}{\alpha^2(1-\alpha)^2}$ and $G_n$ is a sequence of random variables weakly converging to the standard Gaussian when $\alpha <1/2$.
As in the previous case, let $y\in\R$ be fixed and $\varsigma>0$ be a coefficient to specify  later. For $t>0$, we have
\begin{align*}
&\PP\left(x_1(t) \leqslant t/h(\alpha) + t^{1/2}\varsigma y \right)\\ 
& =  \PP\left(H\left(\Big\lfloor\frac{t}{h(\alpha)}+ t^{1/2}\varsigma  y \Big\rfloor, \Big\lfloor\frac{t}{h(\alpha)}+ t^{1/2}\varsigma y \Big\rfloor\right)\geqslant t\right)\\
&=  \PP\left(h(\alpha)\left(\Big\lfloor\frac{t}{h(\alpha)} + t^{1/2}\varsigma y \Big\rfloor \right) + \sigma \Big\lfloor\frac{t}{h(\alpha)} + t^{1/2}\varsigma y \Big\rfloor^{1/2} G_{\big\lfloor\frac{t}{h(\alpha)} + t^{1/2}\varsigma y \big\rfloor} \geqslant t\right)\\
&= \PP\left( h(\alpha) \varsigma y t^{1/2} + \sigma (t/h(\alpha))^{1/2}G_{\big\lfloor\frac{t}{4} + t^{1/3}\varsigma y \big\rfloor} \geqslant o(t^{1/2})\right).
\end{align*}
 Thus, if we set $\varsigma = \frac{1-2\alpha}{\sqrt{\alpha(1-\alpha)}}$, we obtain that
$$ \lim_{t\to\infty}
\PP\left(x_1(t) \geqslant \frac{t}{h(\alpha)} + t^{1/2}\varsigma y \right) =  \lim_{t\to\infty}\PP\left(    G_{\big\lfloor\frac{t}{h(\alpha)} + t^{1/2}\varsigma y \big\rfloor} \leqslant -y\right) =  G(-y).$$

\subsubsection{GUE case:}
We have
\begin{multline}
\PP\left(x_{\lfloor rt \rfloor}(t)  \leqslant \pi t + \varsigma t^{1/3}y \right) = \\
\PP\left(H\left( 2 \big\lfloor rt \big\rfloor + \big\lfloor  \pi t + \varsigma
 t^{1/3}y \big\rfloor - 1, \lfloor rt \big\rfloor + \big\lfloor  \pi t + \varsigma t^{1/3}y \big\rfloor  \right)\geqslant t\right).
\label{eq:grosseproba}
\end{multline}
By Theorem \ref{theo:LPPawaydiagointro} we have that for $m=\kappa n+ \mathcal{O}(n^{1/3})$,
$$ H(n,m) = (1+\sqrt{\kappa})^2 n + \sigma n^{1/3} \chi_n, $$
where
$$\sigma = \frac{(1+ \sqrt{\kappa})^{4/3}}{\sqrt{\kappa}^{1/3}}$$
and  $\chi_n$ is a sequence of random variables weakly converging to the GUE  distribution, for $\alpha >\frac{\sqrt{\kappa}}{1+\sqrt{\kappa}}$. Hence
\begin{equation*}
\eqref{eq:grosseproba}= \PP\left(  \left( 1+\sqrt{\frac{\pi}{r+\pi}}\right)^2 n + \sigma n^{1/3} \chi_n \geqslant t \right)
\end{equation*}
where $n=2\big\lfloor rt \big\rfloor + \big\lfloor  \pi t + \varsigma t^{1/3}y \big\rfloor - 1$. Choosing $\pi$ such that
$ \left( 1+\sqrt{\frac{r+ \pi}{2r+\pi}}\right)^2(2r+\pi)=1$,
i.e.
$$\pi= \frac{1-6r +r^2}{4},$$ we get that
$$ \eqref{eq:grosseproba} = \PP\left(\left( 1+\sqrt{\frac{r+\pi}{2r+\pi}}\right)^2 \varsigma t^{1/3} y + \sigma \big((2r+\pi)t \big)^{1/3} \chi_n \geqslant o(t^{1/3})\right).$$
Hence, letting
$$ \varsigma = \sigma (2r+\pi)^{4/3} =  2^{-4/3}\frac{(1+r)^{5/3}}{(1-r)^{1/3}},$$
yields
$$ \lim_{t\to\infty} \eqref{eq:grosseproba} =\lim_{n\to\infty} \PP\big(   \chi_n \geqslant -y + o(1)\big),$$
so that
$$ \lim_{t\to\infty}
\PP\left(x_{\lfloor rt \rfloor}(t) \geqslant \frac{(1-6r +r^2)t}{4} + t^{1/3}y\varsigma \right) =  F_{\rm GUE}(-y),$$
for $\alpha > \frac{1-r}{2}$ (This condition comes from the condition $\alpha > \frac{\sqrt{\kappa}}{1+\sqrt{\kappa}}$ in Theorem  \ref{theo:LPPawaydiagointro}).

\subsubsection{Crossover case:}
For the sake of clarity, we explain how the proof works in the one-point case. The multipoint case is similar. 
Assume 
$$ \alpha = \frac{1+2\sigma^{-1}\varpi n^{-1/3}}{2}.$$
Combining Lemma \ref{lem:couplinglppTASEP} and Proposition \ref{prop:coupling} as before, 
$$ \PP\left( H_n(\eta) \leqslant p \right)  = \PP\left( x_{2n^{2/3}\xi\eta+1}(4n+ (p\sigma- \xi^2\eta^2)n^{1/3}) \geqslant n- 3 n^{2/3}\xi \eta\right),$$
where $\xi=2^{2/3}$. 
Letting 
$$ t=4n+ (p\sigma- \xi^2\eta^2)n^{1/3},  $$
we have that 
$$  \alpha = \frac{1+2^{4/3}\varpi t^{-1/3}}{2} + o(t^{-1/3}),$$
$$ 2n^{2/3}\xi\eta+1 = 2^{1/3} \eta t^{2/3} + o(t^{1/3}),$$
$$  n- 3 n^{2/3}\xi = \frac{t}{4} - \frac{3 \eta\xi}{2 2^{1/3}}t^{2/3} + \frac{\eta^2 \xi^2 -\sigma  p}{2^{8/3}} t^{1/3} + o(t^{1/3}).$$
Hence, under this matching of parameters 
$$ \lim_{n \to \infty } \PP\left( H_n(\eta) \leqslant p \right) = \lim_{t\to\infty} \PP\left(  X_{t}(\eta)  \geqslant p  \right),$$
where the rescaled position $X_n(\eta)$ is defined in \eqref{eq:defrescaledposition}.

\begin{remark}
Although we do not attempt in this paper to make an exhaustive analysis of the FTASEP with respect to varying  initial conditions or parameters, such further analysis is allowed by our framework in several directions. In terms of initial condition, Proposition \ref{prop:mapping} allows to study the process starting form combinations of the wedge, flat or stationary initial data and translate to FTASEP some of the results known from TASEP \cite{prahofer2002current, arous2011current}. In terms of varying parameters, one could study the effect  of varying $\alpha$ or the speed of the next  few particles and one should observe the BBP transition \cite{baik2005phase} when considering fluctuations of $x_{r t}$ for $r>0$ (See Remark 1.6  in \cite{baik2017pfaffian}).
\label{rem:slowparticle}
\end{remark}

\section{Fredholm Pfaffian formulas for $k$-point distributions}
\label{sec:kpointdistribution}

We recall in this Section a result from \cite{baik2017pfaffian} which characterizes the joint probability distribution of passage times in the half-space exponential LPP model.  

\begin{proposition}[{\protect\cite[Proposition 1.7]{baik2017pfaffian}}]
	For any $h_1, \dots, h_k>0$ and integers $0<n_1 < n_2 < \dots <n_k$ and $m_1> m_2 > \dots > m_k$ such that $n_i>m_i$ for all $i$,  we have that
	$$ \mathbb{P}\big( H(n_1, m_1) < h_1, \dots,  H(n_k, m_k) < h_k \big)= \Pf\big(J-\kernel^{\rm exp}\big)_{\mathbb{L}^2\big( \mathbb{D}_k(h_1, \dots, h_k) \big)},$$
	where $J$ is the matrix kernel 
	\begin{equation}
	J(i, u ; j, v) = \mathds{1}_{(i, u)=(j,v)}\left(\begin{matrix}
	0 &1 \\
	-1 & 0
	\end{matrix} \right),
	\label{eq:kernelJ}
	\end{equation}
	and 
	$$ \mathbb{D}_k(g_1, \dots, g_k) = \lbrace (i,x)\in \lbrace 1, \dots, k\rbrace\times\R: x \geqslant g_i\rbrace.$$
	\label{prop:kernelexponential}
\end{proposition}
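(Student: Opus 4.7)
My plan is to deduce Proposition \ref{prop:kernelexponential} from the theory of Pfaffian Schur processes, following the general strategy recalled in the abstract and used throughout \cite{baik2017pfaffian}. The first step is to realize the half-space exponential LPP model as a degeneration of a half-space geometric LPP model. Concretely, I would let the geometric parameters be $q^{a_i b_j}$ on the off-diagonal and $q^{\alpha a_i^2}$ on the diagonal, then scale $q\to 1$ with the $a_i$ and $b_j$ also tuned appropriately so that rescaled passage times converge to exponential ones. This reduces the problem to proving the corresponding Pfaffian identity in the geometric setting (at the integer level), plus a justification that the Fredholm Pfaffian and its kernel survive the limit uniformly.

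The second (and central) step is a combinatorial/algebraic identity for the geometric model. Via the half-space RSK correspondence one identifies
\[
H(n,m) = \lambda^{(n,m)}_1,
\]
where $\lambda^{(n,m)}$ is the shape of a random symmetric Young diagram, and the collection of shapes along a space-like sequence $(n_1,m_1),\dots,(n_k,m_k)$ with $n_1<\cdots<n_k$, $m_1>\cdots>m_k$ is distributed as the marginal of a Pfaffian Schur process. The standard machinery (e.g.\ Borodin--Rains) then yields that the point configuration $\{\lambda^{(n_i,m_i)}_j - j\}_{i,j}$ is a Pfaffian point process, with an explicit $2\times 2$ matrix correlation kernel written as a double contour integral of ratios of products of $(q;q)$-like factors over the $a_i,b_j,\alpha$. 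The event $\bigcap_i \{H(n_i,m_i)<h_i\}$ translates to the absence of points in $\mathbb{D}_k(h_1,\dots,h_k)$, which by the gap probability formula equals
\[
\Pf(J - \kernel)_{\mathbb{L}^2(\mathbb{D}_k(h_1,\dots,h_k))}.
\]

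The third step is the asymptotic/limit analysis: sending $q\to 1$ with the appropriate rescaling, each double integral defining an entry of $\kernel$ converges pointwise to a double integral, which by definition (or inspection, matching with Section 5 of \cite{baik2017pfaffian}) is the kernel $\kernel^{\rm exp}$. To upgrade pointwise convergence to convergence of the Fredholm Pfaffian, I would produce Gaussian- or exponentially-decaying bounds on the entries of $\kernel$ uniform in $q$ near $1$, and invoke Hadamard's bound (Lemma \ref{lem:hadamard}) plus dominated convergence term by term in the series defining the Fredholm Pfaffian.

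The main obstacle, as in \cite{baik2017pfaffian}, is the third step: obtaining uniform tail bounds on the pre-limit kernel that are compatible with Hadamard's inequality. The double-contour representation of $\kernel$ must be deformed so that the integrand decays like $e^{-a(x+y)}$ (and the analogous bounds on the $(1,2)$, $(2,1)$, $(2,2)$ entries required by Lemma \ref{lem:hadamard}), uniformly in the geometric parameters as they approach the exponential scaling. Once that uniform control is in place, everything else is bookkeeping: combining the Pfaffian Schur identity, the RSK-type coupling, and the termwise limit of the series \eqref{eq:defFredholmPfaffian} yields the claimed formula.
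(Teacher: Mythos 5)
Your proposal follows essentially the route by which this statement is actually established: the present paper does not reprove it but imports it from \cite{baik2017pfaffian}, where the formula is obtained exactly as you describe --- matching half-space geometric LPP along a space-like sequence $(n_1,m_1),\dots,(n_k,m_k)$ to a Pfaffian Schur process via symmetrized RSK, reading off the gap probability over $\mathbb{D}_k(h_1,\dots,h_k)$ as a Fredholm Pfaffian of the Borodin--Rains-type correlation kernel, and then degenerating geometric to exponential weights with pointwise kernel convergence upgraded via Hadamard-type bounds. The one caution is that what you defer as bookkeeping (the uniform contour estimates and the residue terms that produce the $\Rkernel^{\rm exp}$ corrections in the three regimes $\alpha>1/2$, $\alpha=1/2$, $\alpha<1/2$) is where the substance of the argument in \cite{baik2017pfaffian} actually lies.
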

\label{sec:kernelexp}
The kernel $\kernel^{\rm exp}$ was introduced in \cite{baik2017pfaffian} in Section 4.4. It is defined on the state-space $(\lbrace 1, \dots, k\rbrace \times \R)^2$ and takes values in the space of skew-symmetric $2\times2 $ real matrices. The entries are given by 
$$ \kernel^{\rm exp}(i,x;j,y) = \Ikernel^{\rm exp}(i,x;j,y) + \begin{cases} \Rkernel^{\rm exp}(i,x;j,y) & \mbox{when }\alpha>1/2,\\
\hat{\Rkernel}^{\rm exp}(i,x;j,y) & \mbox{when }\alpha<1/2,\\
\bar{\Rkernel}^{\rm exp}(i,x;j,y) & \mbox{when }\alpha=1/2.\\  \end{cases}$$
Recalling the Definition \ref{def:basicrays} for integration contours in the complex plane, we define $\Ikernel^{\rm exp}$ by the following formulas.
\begin{multline*}
\Ikernel^{\rm exp}_{11}(i, x; j, y) :=  \int_{\mathcal{C}_{1/4}^{\pi/3}}\frac{\mathrm{d}z}{2\I\pi}\int_{\mathcal{C}_{1/4}^{\pi/3}}  \frac{\mathrm{d}w}{2\I\pi} \\  \frac{z-w}{4zw(z+w)} e^{-xz-yw}
\frac{(1+2z)^{n_i}(1+2w)^{n_j}}{(1-2z)^{m_i}(1-2w)^{m_j}} (2z+2\alpha -1)(2w+2\alpha -1).
\end{multline*}
\begin{multline}
\Ikernel^{\rm exp}_{12}(i, x; j, y) :=  \int_{\mathcal{C}_{a_z}^{\pi/3}}\frac{\mathrm{d}z}{2\I\pi}\int_{\mathcal{C}_{a_w}^{\pi/3}} \frac{\mathrm{d}w}{2\I\pi} \\  \frac{z-w}{2z(z+w)} e^{-xz-yw}
 \frac{(1+2z)^{n_i}}{(1-2w)^{n_j}}\frac{(1+2w)^{m_j}}{(1-2 z)^{m_i}}  \frac{2\alpha -1 + 2z}{2\alpha -1-2w} ,
 \label{eq:I12exp}
 \end{multline}
 where in the definition of the contours $\mathcal{C}_{a_z}^{\pi/3}$ and $ \mathcal{C}_{a_w}^{\pi/3} $, the constants $a_z, a_w\in\R$ are chosen so that $0<a_z<1/2$, $a_z+a_w >0$  and $a_w<(2\alpha-1)/2$.
\begin{multline}
\Ikernel^{\rm exp}_{22}(i, x; j, y) :=    \int_{\mathcal{C}_{b_z}^{\pi/3}}\frac{\mathrm{d}z}{2\I\pi}\int_{\mathcal{C}_{b_w}^{\pi/3}}\frac{\mathrm{d}w}{2\I\pi} \\ \frac{z-w}{z+w} e^{-xz-yw}
\frac{(1+2z)^{m_i}(1+2w)^{m_j}}{(1-2z)^{n_i}(1-2w)^{n_j}} \frac{1}{2\alpha -1 - 2z}\frac{1}{2\alpha -1 - 2w},
\label{eq:I22exp}
\end{multline}
where in the definition of the contours $\mathcal{C}_{b_z}^{\pi/3}$ and $ \mathcal{C}_{b_w}^{\pi/3} $, the constants $b_z, b_w\in\R$ are chosen so that  $0<b_z, b_w<(2\alpha-1)/2$ when $\alpha>1/2$, while we impose only $b_z, b_w>0$ when $\alpha\leqslant 1/2$.

We set $\Rkernel_{11}^{\rm exp}(i,x ; j,y) = 0,$ and $\Rkernel_{12}^{\rm exp}(i,x ; j,y)=0$ when $i\geqslant j$, and likewise for $\hat{R}^{\rm exp}$ and $\bar{R}^{\rm exp}$. The other entries  depend on the value of $\alpha$ and the sign of $x-y$.

\textbf{Case $\alpha>1/2$:} When $x>y$,
\begin{equation}\label{R22 new case1}
 \Rkernel_{22}^{\rm exp}(i,x ; j,y)  = - \int_{\mathcal{C}_{a_z}^{\pi/3}} \frac{\mathrm{d}z}{2\I\pi} \frac{(1+2z)^{m_i}(1-2z)^{m_j}}{(1-2z)^{n_i}(1+2z)^{n_j}} \frac{1}{2\alpha-1-2z}\frac{1}{2\alpha-1+2z} 2ze^{-\vert x-y\vert z},\end{equation}
and when $x<y$
$$ \Rkernel_{22}^{\rm exp}(i,x ; j,y)  = \int_{\mathcal{C}_{a_z}^{\pi/3}}\frac{\mathrm{d}z}{2\I\pi} \frac{(1+2z)^{m_j}(1-2z)^{m_i}}{(1-2z)^{n_j}(1+2z)^{n_i}} \frac{1}{2\alpha-1-2z}\frac{1}{2\alpha-1+2z} 2ze^{-\vert x-y\vert z},$$
where $(1-2\alpha)/2<a_z<(2\alpha-1)/2$. One immediately checks that $\Rkernel_{22}^{\rm exp}$ is antisymmetric as we expect. When $i<j$ and $x>y$
$$ \Rkernel_{12}^{\rm exp}(i,x ; j,y)  = - \int_{\mathcal{C}_{1/4}^{\pi/3}} \frac{\mathrm{d}z}{2\I\pi}
\frac{(1+2z)^{n_i}}{(1+2z)^{n_j}}\frac{(1-2z)^{m_j}}{(1-2z)^{m_i}}
e^{-\vert x-y\vert z},$$
while if $x<y$, $ \Rkernel_{12}^{\rm exp}(i,x ; j,y)=\Rkernel_{12}^{\rm exp}(i,y ; j,x)$. Note that $\Rkernel_{12}$ is not antisymmetric nor symmetric (except when $k=1$, i.e. for the one point distribution).

\textbf{Case $\alpha<1/2$:} When $x>y$, we have
\begin{multline}
 \hat{\Rkernel}_{22}^{\rm exp}(i,x ; j,y)  =\frac{-e^{\frac{1-2\alpha}{2}y}}{2}  \int \frac{\mathrm{d}z}{2\I\pi} \frac{(1+2z)^{m_i}(2\alpha)^{m_j}}{(1-2z)^{n_i}(2-2\alpha)^{n_j}} \frac{e^{-xz}}{2\alpha-1+2z} \\
 + \frac{e^{\frac{1-2\alpha}{2}x}}{2}  \int \frac{\mathrm{d}z}{2\I\pi} \frac{(1+2z)^{m_j}(2\alpha)^{m_i}}{(1-2z)^{n_j}(2-2\alpha)^{n_i}} \frac{e^{-yz}}{2\alpha-1+2z} \\
 - \int_{\mathcal{C}_{a_z}^{\pi/3}} \frac{\mathrm{d}z}{2\I\pi} \frac{(1+2z)^{m_i}(1-2z)^{m_j}}{(1-2z)^{n_i}(1+2z)^{n_j}} \frac{1}{2\alpha-1-2z}\frac{1}{2\alpha-1+2z} 2ze^{-\vert x-y\vert z} \\
-\frac{e^{(x-y)\frac{1-2\alpha}{2}}}{4}\frac{(2\alpha)^{m_i}(2-2\alpha)^{m_j}}{(2-2\alpha)^{n_i}(2\alpha)^{n_j}}+ \frac{e^{(y-x)\frac{1-2\alpha}{2}}}{4}\frac{(2\alpha)^{m_j}(2-2\alpha)^{m_i}}{(2-2\alpha)^{n_j}(2\alpha)^{n_i}},
\label{eq:R22expcasalphapetit1}
\end{multline}
where the contours in the two first integrals pass to the right of $ (1-2\alpha)/2 $. When $x<y$, the sign of the third term is flipped  so that $\hat{\Rkernel}_{22}^{\rm exp}(i,x ; j,y)=-\hat{\Rkernel}_{22}^{\rm exp}(j, y ; i,x)$.
One can write slightly simpler formulas by reincorporating residues in the first two integrals: thus, when $x>y$,
\begin{multline}
 \hat{\Rkernel}_{22}^{\rm exp}(i,x ; j,y)  =\frac{-e^{\frac{1-2\alpha}{2}y}}{2}  \int_{\mathcal{C}_{a_z}^{\pi/3}} \frac{\mathrm{d}z}{2\I\pi} \frac{(1+2z)^{m_i}(2\alpha)^{m_j}}{(1-2z)^{n_i}(2-2\alpha)^{n_j}} \frac{e^{-xz}}{2\alpha-1+2z} \\
 + \frac{e^{\frac{1-2\alpha}{2}x}}{2} \int_{\mathcal{C}_{a_z}^{\pi/3}} \frac{\mathrm{d}z}{2\I\pi} \frac{(1+2z)^{m_j}(2\alpha)^{m_i}}{(1-2z)^{n_j}(2-2\alpha)^{n_i}} \frac{e^{-yz}}{2\alpha-1+2z} \\
 - \int_{\mathcal{C}_{a_z}^{\pi/3}} \frac{\mathrm{d}z}{2\I\pi} \frac{(1+2z)^{m_i}(1-2z)^{m_j}}{(1-2z)^{n_i}(1+2z)^{n_j}} \frac{1}{2\alpha-1-2z}\frac{1}{2\alpha-1+2z} 2ze^{-\vert x-y\vert z},
\end{multline}
where $\frac{2\alpha-1}{2} <a_z<\frac{1-2\alpha}{2}$.
When $i<j$, if $x>y$
\begin{equation}
 \hat{\Rkernel}_{12}^{\rm exp}(i,x ; j,y)  = - \int_{\mathcal{C}_{1/4}^{\pi/3}}\frac{\mathrm{d}z}{2\I\pi}\frac{(1+2z)^{n_i}}{(1+2z)^{n_j}}\frac{(1-2z)^{m_j}}{(1-2z)^{m_i}}e^{-\vert x-y\vert z},
 \label{eq:R12expcasalphapetit1}
\end{equation}
while if $x<y$, $ \hat{\Rkernel}_{12}^{\rm exp}(i,x ; j,y)=\hat{\Rkernel}_{12}^{\rm exp}(i,y ; j,x)$.

\textbf{Case $\alpha=1/2$:} When $x>y$,
\begin{multline}\label{R22 new case3}
 \bar{\Rkernel}_{22}^{\rm exp}(i,x ; j,y)  = - \int_{\mathcal{C}_{1/4}^{\pi/3}}\frac{\mathrm{d}z}{2\I\pi} \frac{(1+2z)^{m_i}}{(1-2z)^{n_i}} \frac{e^{-xz}}{4z}
 +  \int_{\mathcal{C}_{1/4}^{\pi/3}}\frac{\mathrm{d}z}{2\I\pi} \frac{(1+2z)^{m_j}}{(1-2z)^{n_j}} \frac{e^{-yz}}{4z} \\
 +\int_{\mathcal{C}_{1/4}^{\pi/3}} \frac{\mathrm{d}z}{2\I\pi} \frac{(1+2z)^{m_i}(1-2z)^{m_j}}{(1-2z)^{n_i}(1+2z)^{n_j}} \frac{e^{-\vert x-y\vert z}}{2z} \ \  -\frac{1}{4},\\
\end{multline}
with a modification of the last two terms when $x<y$ so that $\bar{\Rkernel}_{22}^{\rm exp}(i,x ; j,y)=-\bar{\Rkernel}_{22}^{\rm exp}(j, y ; i,x)$.
When $i<j$, if $x>y$
\begin{equation*}
 \bar{\Rkernel}_{12}^{\rm exp}(i,x ; j,y)  = - \int_{\mathcal{C}_{1/4}^{\pi/3}} \frac{\mathrm{d}z}{2\I\pi}\frac{(1+2z)^{n_i}}{(1+2z)^{n_j}}\frac{(1-2z)^{m_j}}{(1-2z)^{m_i}}e^{-\vert x-y\vert z},
\end{equation*}
while if
$x<y$, $ \bar{\Rkernel}_{12}^{\rm exp}(i,x ; j,y)=\bar{\Rkernel}_{12}^{\rm exp}(i,y ; j,x)$.

\begin{remark}
	It may be possible to write simpler integral formulas for $\kernel^{\rm exp}$ by changing the contours used in the definition of $\Ikernel^{\rm exp}$ and identifying certain terms of $\Rkernel^{\rm exp}$ as residues of the integrand in  $\Ikernel^{\rm exp}$. The reason why we have written the kernel  $\Ikernel^{\rm exp}$ as above is mostly technical. For the asymptotic analysis of these formulas, it is convenient that  all  contours may be deformed so that they approach $0$ without encountering any singularity, as will be explained in Section \ref{sec:crossoverasymptotics}. 
\end{remark}

\section{Asymptotic analysis in the crossover regime}
\label{sec:crossoverasymptotics}
This section is devoted to the proofs of Theorems \ref{theo:crossfluctuations} and \ref{theo:SU}. We start by providing formulas for the correlation kernels $\kernel^{\rm cross}$ and $\kernel^{\rm SU}$ used in the statements of both theorems. 

The published version of this paper contains several mistakes in the definition and derivation of the correlation kernels below that resulted in some incorrect choices of contours and some missing terms in the kernels. This was pointed out to us by Zongrui Yang, prompted by his joint work with Evgeni Dimitrov \cite{dimitrov-yang2024preparation}. Yang kindly provided suggested edits to resolve those issues, and we have included those below. In particular, our earlier version of this paper had issues when the parameter $\varpi$ was less than or equal to $0$. We graciously acknowledge Yang's help in this revision.

\subsection{Formulas for $\kernel^{\rm cross}$}
\label{sec:formulaKcross}
The kernel $\kernel^{\rm cross}$ introduced  in  \cite[Section 2.5]{baik2017pfaffian} can be written  as 
 	$$ \kernel^{\rm cross}(i, x;j, y) = \Ikernel^{\rm cross}(i, x;j, y)+\Rkernel^{\rm cross}(i, x;j, y),$$
 	where $ \kernel^{\rm cross}_{21}(i, x;j, y)=-\kernel^{\rm cross}_{12}(j, y; i, x)$, and   we have
 	\begin{align*}
 	\Ikernel_{11}^{\rm cross}(i, x;j, y) =&  \int_{\mathcal{C}_{1}^{\pi/3}} \frac{\mathrm{d}z}{2\I\pi}\int_{\mathcal{C}_{1}^{\pi/3}}\frac{\mathrm{d}w}{2\I\pi} \\ 
 	&\frac{z+\eta_i -w-\eta_j}{z+w+ \eta_i+\eta_j} \frac{z+\varpi+\eta_i}{z+\eta_i}\frac{w+\varpi+\eta_j}{w+\eta_j}  e^{z^3/3 + w^3/3 - x z -y w},\\
 	\Ikernel_{12}^{\rm cross}(i, x;j, y) =& \int_{\mathcal{C}_{a_z}^{\pi/3}} \frac{\mathrm{d}z}{2\I\pi}\int_{\mathcal{C}_{a_w}^{\pi/3}} \frac{\mathrm{d}w}{2\I\pi} \\ 
 	&\frac{z +\eta_i -w+\eta_j  }{2(z+\eta_i)(z+\eta_i+w-\eta_j)}\frac{z+\varpi+\eta_i}{-w+\varpi+\eta_j}  e^{z^3/3 + w^3/3 - x z -yw} ,\\
 	\Ikernel_{22}^{\rm cross}(i, x;j, y) =&  \int_{\mathcal{C}_{b_z}^{\pi/3}} \frac{\mathrm{d}z}{2\I\pi}\int_{\mathcal{C}_{b_w}^{\pi/3}} \frac{\mathrm{d}w}{2\I\pi}
 	\frac{z-\eta_i-w+\eta_j}{4(z-\eta_i+w-\eta_j)}\frac{e^{z^3/3 + w^3/3 - x z-yw }}{(z-\varpi-\eta_i)(w-\varpi-\eta_j)}.
 	\end{align*}
 	The contours in $\Ikernel_{12}^{\rm cross}$ are chosen so that $a_z>-\eta_i$, $a_z+a_w>\eta_j-\eta_i$ and $a_w<\varpi+ \eta_j$.
 	The contours in $\Ikernel_{22}^{\rm cross}$ are chosen so that  (1) if $\varpi\leqslant0$ then $b_z>\eta_i$ and $b_w>\eta_j$, and (2) if $\varpi>0$ then $b_z\in(\eta_i,\eta_i+\varpi)$ and $b_w\in(\eta_j,\eta_j+\varpi)$. 
 	
 	We have $\Rkernel_{11}^{\rm cross}(i, x;j, y)=0$, and $\Rkernel_{12}^{\rm cross}(i, x;j, y)=0$ when $i\geqslant j$. When $i<j$,
 	$$ \Rkernel_{12}^{\rm cross}(i, x;j, y) = \frac{-\exp\left(\frac{-(\eta_i-\eta_j)^4+ 6(x+y)(\eta_i-\eta_j)^2+3(x-y)^2}{12(\eta_i-\eta_j)}\right)}{\sqrt{4\pi(\eta_j-\eta_i)}}, $$
 	which may also be written as 
 	$$\Rkernel_{12}^{\rm cross}(i, x;j, y)= -\int_{-\infty}^{+\infty} \mathrm{d}\lambda e^{-\lambda(\eta_i - \eta_j)} \Ai(x_i+\lambda)\Ai(x_j+\lambda).$$
 	The kernel  $\Rkernel_{22}^{\rm cross}$ is antisymmetric, and when $ x -   \eta_i^2>y -  \eta_j^2$ we have
 	\begin{multline}\label{cross R22}
 	\Rkernel_{22}^{\rm cross}(i, x;j, y)= 
 	 \frac{\mathds{1}_{\varpi\leqslant0}}{4} \int_{\mathcal{C}_{c_z}^{\pi/3}}\frac{\mathrm{d}z}{2\I\pi} \frac{\exp\big( (z+\eta_j)^3/3 +(\varpi+\eta_i)^3/3 -y (z+\eta_j) -x(\varpi+\eta_i)\big)}{\varpi+z} \\
 	 -\frac{\mathds{1}_{\varpi\leqslant0} }{4} \int_{\mathcal{C}_{c_z}^{\pi/3}}\frac{\mathrm{d}z}{2\I\pi} \frac{\exp\big( (z+\eta_i)^3/3 +(\varpi+\eta_j)^3/3 -x (z+\eta_i) -y(\varpi+\eta_j)\big)}{\varpi+z} \\
 	-\frac{1}{2}\int_{\mathcal{C}_{d_z}^{\pi/3}}\frac{\mathrm{d}z}{2\I\pi}\frac{z\exp\big( (z+\eta_i)^3/3 +(-z+\eta_j)^3/3 -x (z+\eta_i) -y(-z+\eta_j)\big)}{(\varpi+z)(\varpi -z)}\\
          -\frac{\mathds{1}_{\varpi=0}}{4} \exp\left(\eta_i^3/3+\eta_j^3/3-\eta_ix-\eta_jy\right),
 	\end{multline}
	where the contours are chosen so that $c_z<-\varpi$ and  (1) if $\varpi\neq0$ then $d_z$ is between $-\varpi$ and $\varpi$, and (2) if $\varpi=0$ then $d_z>0$.  
\subsection{Formulas for $\kernel^{\rm SU}$}
\label{sec:formulaKSU}
The kernel $\kernel^{\rm SU}$ introduced in \cite[Section 2.5]{baik2017pfaffian} decomposes as 
 	$$ \kernel^{\rm SU}(i, x;j, y) = \Ikernel^{\rm SU}(i, x;j, y)+\Rkernel^{\rm SU}(i, x;j, y),$$
 	where  $ \kernel^{\rm SU}_{21}(i, x;j, y)=-\kernel^{\rm SU}_{12}(j, y; i, x)$, and  we have
 	\begin{align*}
 	\Ikernel^{\rm  SU}_{11}(i, x;j, y) &=   \int_{\mathcal{C}_{1}^{\pi/3}} \frac{\mathrm{d}z}{2\I\pi}\int_{\mathcal{C}_{1}^{\pi/3}} \frac{\mathrm{d}w}{2\I\pi}
 	\frac{(z+\eta_i-w-\eta_j)e^{z^3/3 + w^3/3 - xz -yw}}{4(z+\eta_i)(w+\eta_j)(z+w+ \eta_i+\eta_j)}   , \\
 	\Ikernel^{\rm  SU}_{12}(i, x;j, y) &=  \int_{\mathcal{C}_{a_z}^{\pi/3}} \frac{\mathrm{d}z}{2\I\pi}\int_{\mathcal{C}_{a_w}^{\pi/3}} \frac{\mathrm{d}w}{2\I\pi}
 	\frac{z + \eta_i -w + \eta_j}{2(z+\eta_i)(z+w+\eta_i-\eta_j)}    e^{z^3/3 + w^3/3 - xz -yw} ,\\
 	\Ikernel^{\rm  SU}_{22}(i, x;j, y)&=  \int_{\mathcal{C}_{b_z}^{\pi/3}} \frac{\mathrm{d}z}{2\I\pi}\int_{\mathcal{C}_{b_w}^{\pi/3}}\frac{\mathrm{d}w}{2\I\pi}
 	\frac{z-\eta_i-w+\eta_j}{z-\eta_i+w-\eta_j}  e^{z^3/3 + w^3/3 - xz-yw}.
 	\end{align*}
 	The contours in $\Ikernel_{12}^{\rm SU}$ are chosen so that $a_z>-\eta_i$,  $a_z+a_w>\eta_j-\eta_i$.  
 	The contours in $\Ikernel_{22}^{\rm SU}$ are chosen so that $b_z>\eta_i$ and $b_w>\eta_j$.
 	
 	We have $\Rkernel_{11}^{\rm SU}(i, x;j, y)=0$, and $\Rkernel_{12}^{\rm SU}(i, x;j, y)=0$ when $i\geqslant j$.
 	When $i<j$,
 	$$ \Rkernel_{12}^{\rm SU}(i, x;j, y) = \Rkernel_{12}^{\rm cross}(i, x;j, y) = \frac{-\exp\left(\frac{-(\eta_i-\eta_j)^4+ 6(x+y)(\eta_i-\eta_j)^2+3(x-y)^2}{12(\eta_i-\eta_j)}\right)}{\sqrt{4\pi(\eta_j-\eta_i)}}.$$
 	The kernel  $\Rkernel_{22}^{\rm SU}$ is antisymmetric, and when $ x - \eta_i^2> y - \eta_j^2$ we have
 	\begin{multline*}
 	\Rkernel_{22}^{\rm SU}(i, x;j, y)= \\ 
 	-\frac{1}{2} \int_{\mathcal{C}_{0}^{\pi/3}}\frac{\mathrm{d}z}{2\I\pi} z\exp\big( (z+\eta_i)^3/3 +(-z+\eta_j)^3/3 -x (z+\eta_i) -y(-z+\eta_j)\big)
 	\end{multline*}
 	where the contours are chosen so that $a_z>-\varpi$ and $b_z$ is between $-\varpi$ and $\varpi$.

\subsection{Proof of Theorem \ref{theo:crossfluctuations}}

Recall that we scale $\alpha$ as
	$$ \alpha = \frac{1+2\sigma^{-1}\varpi n^{-1/3}}{2}.$$
The proof of Theorem \ref{theo:crossfluctuations} follows the same lines as that of Theorems 1.4 and 1.5 in Sections 5 and 6 of \cite{baik2017pfaffian} (corresponding to Theorems \ref{theo:LPPdiagointro} and \ref{theo:LPPawaydiagointro}  in the present paper). We introduce the rescaled  correlation kernel
	\begin{multline*}
	\kernel^{\mathrm{exp}, n}(i,  x_i ; j, x_j):= \\ 
	{\footnotesize \begin{pmatrix}
	\sigma^2 n^{2/3}e^{ \eta_i x_i +  \eta_j x_j - \eta_i^3/3-\eta_j^3/3}\kernel_{11}^{\rm exp}\big(i,X_i; j, X_j\big) &\ \  \sigma n^{1/3}e^{ \eta_i x_i -  \eta_j x_j - \eta_i^3/3+\eta_j^3/3} \kernel_{12}^{\rm exp}\big(i,X_i; j, X_j\big)\\
	\sigma n^{1/3}e^{ -\eta_i x_i +  \eta_j x_j+ \eta_i^3/3-\eta_j^3/3} \kernel_{21}^{\rm exp}\big(i,X_i; j, X_j\big) &\ \ e^{ -\eta_i x_i -  \eta_j x_j+ \eta_i^3/3+\eta_j^3/3}\kernel_{22}^{\rm exp}\big(i,X_i; j, X_j\big)
	\end{pmatrix}},\end{multline*}
	where
	$$ X_i = 4 n+n^{1/3}(\sigma x_i -\xi^2 \eta_i^2 ), $$
	so that we have
	$$\PP\left(H_n(\eta_1) < x_1, \dots , H_n(\eta_k) < x_k \right)  = \Pf\big( J- \kernel^{\rm  exp, n}\big)_{\mathbb{L}^2(\mathbb{D}_k(x_1, \dots, x_k))},$$
	where the quantity $H_n(\eta)$ is defined in Section \ref{sec:halfspaceLPP}. We will decompose the kernel as $\kernel^{\mathrm{exp}, n}(i,  x_i ; j, x_j)=\Ikernel^{\mathrm{exp}, n}(i,  x_i ; j, x_j)+\Rkernel^{\mathrm{exp}, n}(i,  x_i ; j, x_j)$ according to the formulas in  Section \ref{sec:kernelexp}. The parameter $\alpha$ can be greater or smaller than $1/2$ depending on the sign of $\varpi$, so that we will need to be careful with the choice of contours. 
	
In order to prove Theorem \ref{theo:crossfluctuations}, we need to show that 
\begin{equation}
\lim_{n\to\infty}\Pf\big( J- \kernel^{\rm  exp, n}\big)_{\mathbb{L}^2(\mathbb{D}_k(x_1, \dots, x_k))}  =  \Pf\big( J- \kernel^{\rm  cross}\big)_{\mathbb{L}^2(\mathbb{D}_k(x_1, \dots, x_k))}.
\label{eq:whatwewant}
\end{equation}
We will first show that the kernel 	$\kernel^{\rm  exp, n}(i,x; j,y)$ converges to $\kernel^{\rm  cross}(i,x; j,y)$ for fixed $(i,x; j,y)$. Then, we will prove uniform bounds on the kernel  $\kernel^{\rm  exp, n}$ so that the Fredholm Pfaffian is an absolutely convergent series of integrals and hence the pointwise convergence of kernels implies the convergence of Fredholm Pfaffians. 
	
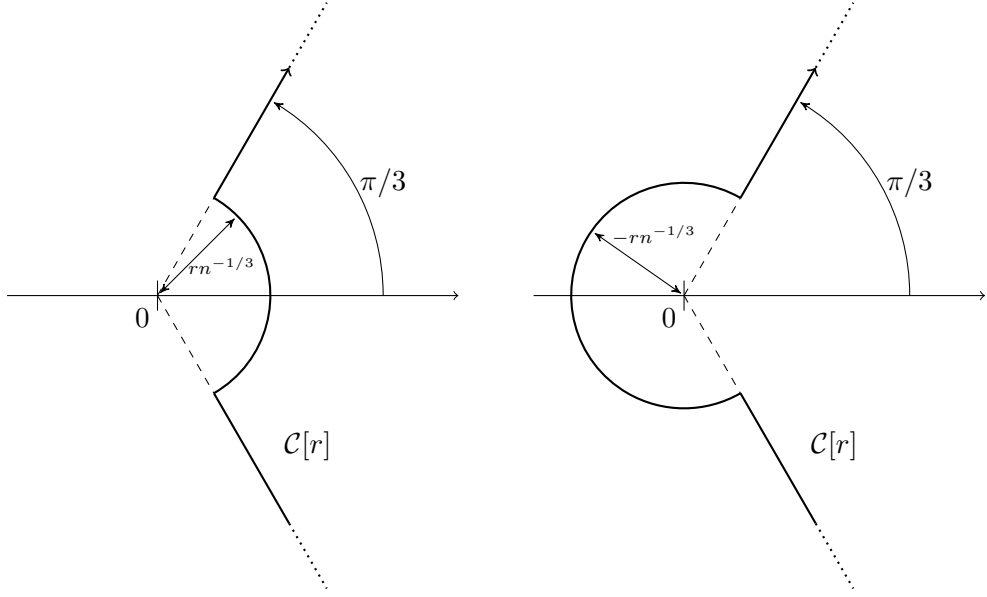
\begin{figure}
	\begin{tikzpicture}[scale=0.85]
	\draw[->] (-2,0) -- (4,0);
	\draw (0,-0.2) -- (0,0.2);
	\draw[thick, ->] (60:1.5) -- (60:3.5);
	\draw[dashed] (60:0) -- (60:1.5);
	\draw[thick] (-60:1.5) -- (-60:3.5);
	\draw[dashed] (-60:0) -- (-60:1.5);
	\draw[thick] (-60:1.5 ) arc(-60:60:1.5);
	\draw[->, >=stealth'] (3,0) arc(0:59:3);
	\draw node at (3,1.5) {$\pi/3$};
	\draw node at (2,-2) {$\mathcal{C}[r]$};
	\draw[<->, >=stealth'] (45:0.05) -- (45:1.45);
	\draw node at (0.85, 0.4) {\tiny{$r\  n^{-1/3}$}};
	\draw[thick, dotted] (60:3.5) -- (60:4.5);
	\draw[thick, dotted] (-60:3.5) -- (-60:4.5);
	\draw(-0.2, -0.3) node{$0$};

	 \begin{scope}[xshift=7cm]
	 \draw[->] (-2,0) -- (4,0);
	 \draw (0,-0.2) -- (0,0.2);
	 \draw[thick, ->] (60:1.5) -- (60:3.5);
	 \draw[dashed] (60:0) -- (60:1.5);
	 \draw[thick] (-60:1.5) -- (-60:3.5);
	 \draw[dashed] (-60:0) -- (-60:1.5);
	 \draw[thick] (60:1.5 ) arc(60:300:1.5);
	 \draw[->, >=stealth'] (3,0) arc(0:59:3);
	 \draw node at (3,1.5) {$\pi/3$};
	 \draw node at (2,-2) {$\mathcal{C}[r]$};
	 \draw[<->, >=stealth'] (145:0.05) -- (145:1.45);
	 \draw node at (-0.4, 0.8) {\tiny{$-r\  n^{-1/3}$}};
	 \draw[thick, dotted] (60:3.5) -- (60:4.5);
	 \draw[thick, dotted] (-60:3.5) -- (-60:4.5);
	 \draw(-0.2, -0.3) node{$0$};
	 \end{scope}
	\end{tikzpicture}
	\caption{The contours  $\mathcal{C}[r]$ when $r>0$ (left) and $\mathcal{C}[r]$ when $r<0$  (right).}
	\label{fig:contours}
\end{figure}

We introduce two types of modifications of the contour $\mathcal{C}_0^{\pi/3}$. For a parameter $r>0$, we denote by  $\mathcal{C}[r]$ the contour formed by the union of an arc of circle around $0$ of radius $r n^{-1/3}$, between $-\pi/3$ and $\pi/3$,  and two semi-infinite rays in directions $\pm\pi/3$ that connect the extremities of the arc to $\infty$ (see Figure \ref{fig:contours}, left). With this definition $0$ is on the left of the contour $\mathcal{C}[r]$.  For a parameter $r<0$, we denote by  $\mathcal{C}[r]$ a similar contour where the arc of circle has radius $-r$ and is now between angles from $\pi/3$ to $5\pi/3$ so that $0$ is on the right of $\mathcal{C}[r]$ (see Figure \ref{fig:contours}, right). 

Thanks to Cauchy's theorem, we have some freedom to deform the contours used in the definition of $\kernel^{\rm  exp}$ in Section \ref{sec:kernelexp}, as long as we do not cross any pole. Thus we can write   
	\begin{multline}
	\kernel^{\mathrm{exp}, n}_{11}(i,  x ; j, y) = e^{ \eta_i x +  \eta_j y- \eta_i^3/3-\eta_j^3/3} \sigma^2 n^{2/3} \int_{\mathcal{C}[1]}\frac{\mathrm{d}z}{2\I\pi} \int_{\mathcal{C}[1]} \frac{\mathrm{d}w}{2\I\pi} \frac{z-w}{4zw(z+w)}\\ (2z+2\sigma^{-1} \varpi n^{-1/3})(2w+2\sigma^{-1} \varpi n^{-1/3})
	\exp\Big(n(f(z)+f(w)) \\+ n^{2/3}(\xi\eta_i \log(1-4z^2)+ \xi\eta_j \log(1-4w^2)) \\ 
	+ n^{1/3} \xi^2 \eta_i^2 z + n^{1/3} \xi^2 \eta_j^2 w - n^{1/3}\sigma (x z +y w)\Big),
	\label{eq:kernelK11_modifiedcontour_cross}
	\end{multline}
	where the function $f$ is 
	$$ f(z) = -4 z + \log(1+2z) -\log(1-2z). $$
To take asymptotics of this expression, we use Laplace's method. The function $f$ has a double critical point at $0$.  We have 
\begin{equation}
f(z) = \frac{\sigma^3}{3} z^3 + \mathcal{O}(z^4),
\label{eq:Taylorapprox1}
\end{equation}
where $\sigma= 2^{4/3}$ and we know from Lemma 5.9 in \cite{baik2017pfaffian} that the contour $\mathcal{C}_{0}^{\pi/3}$ is steep-descent for $\Real[f]$ (which shows that the main contribution to the integral comes from integration in a neighborhood of $0$, see the proof of Theorem \ref{theo:LPPdiagointro}  in Section 5 of \cite{baik2017pfaffian}). 
Let us make  the change of variables $z= n^{-1/3}\tilde{z}/\sigma$ and likewise for $w$, and  use Taylor expansions of all terms in the integrand. Using the same kind of estimates (to control the error made when approximating the integrand) as in Proposition 5.8 in \cite{baik2017pfaffian}, we arrive at
	\begin{multline*}
	\kernel^{\mathrm{exp}, n}_{11}(i,  x ; j, y) \xrightarrow[n\to\infty]{} e^{ \eta_i x +  \eta_j y- \eta_i^3/3-\eta_j^3/3}   \int_{\mathcal{C}_{1}^{\pi/3}} \frac{\mathrm{d}z}{2\I\pi}\int_{\mathcal{C}_{1}^{\pi/3}} \frac{\mathrm{d}w}{2\I\pi}
	\frac{z-w}{z+w}\frac{z+\varpi}{z}\frac{w+\varpi}{w}  \\ \exp\left(z^3/3 + w^3/3 -4\xi\eta_i z^2/\sigma^2 -4\xi\eta_j w^2/\sigma^2 +\xi^2 \eta_i^2 z/\sigma +\xi^2\eta_j^2  w/\sigma- x z -  y w \right).
	\end{multline*}
	With our choice of $\sigma$ and $\xi$, we have that $4\xi/\sigma^2 = \xi^2/\sigma = 1$, so that after a change of variables (a simple translation where $z$ becomes $z+\eta_i$ and $w$ becomes $w+\eta_j$),
	\begin{multline*}
    \kernel^{\mathrm{exp}, n}_{11}(i,  x ; j, y) \xrightarrow[n\to\infty]{} \kernel^{\rm cross}_{11}(i,  x ; j, y) = \\  \int_{\mathcal{C}_{1}^{\pi/3}} \frac{\mathrm{d}z}{2\I\pi}\int_{\mathcal{C}_{1}^{\pi/3}} \frac{\mathrm{d}w}{2\I\pi}
	\frac{z+\eta_i-w-\eta_j}{ z+\eta_i+w+\eta_j  } \frac{z+\varpi+\eta_i}{z+\eta_i}\frac{w+\varpi+\eta_j}{w+\eta_j}  e^{z^3/3 + w^3/3 - x z -y w}.
	\end{multline*}
	
    Regarding $\kernel_{12}$, we write $\kernel^{\mathrm{exp}, n}_{12} = \Ikernel^{\mathrm{exp}, n}_{12} + \Rkernel^{\mathrm{exp}, n}_{12}$ where 
	\begin{multline}
	\Ikernel^{\mathrm{exp}, n}_{12}(i,  x ; j, y) = e^{ \eta_i x - \eta_j y-\eta_i^3/3+\eta_j^3/3}\sigma n^{1/3} \int_{\mathcal{C}[a_z]}\frac{\mathrm{d}z}{2\I\pi} \int_{\mathcal{C}[a_w]} \frac{\mathrm{d}w}{2\I\pi} \frac{z-w}{2z(z+w)} \\  \frac{2z+2\sigma^{-1} \varpi n^{-1/3}}{-2w+2\sigma^{-1} \varpi n^{-1/3}}
	\exp\Big(n(f(z)+f(w))  \\ + n^{2/3}(\xi\eta_i \log(1-4z^2)- \xi\eta_j \log(1-4w^2)) 
	+ n^{1/3} \xi^2 \eta_i^2 z \\ + n^{1/3} \xi^2 \eta_j^2 w - n^{1/3}\sigma (x z +y w)\Big),
	\label{eq:kernelK12_modifiedcontour_cross}
	\end{multline}
	where the contours are chosen so that $a_z>0$, $a_z+a_w>0$ and $a_w< \sigma^{-1} \varpi$.
	 Applying Laplace method as for $\kernel_{11}$, we  arrive at 
	\begin{multline*}
	\Ikernel^{\mathrm{exp}, n}_{12}(i,  x ; j, y) \xrightarrow[n\to\infty]{}  e^{ \eta_i x - \eta_j y-\eta_i^3/3+\eta_j^3/3} \int_{\mathcal{C}_{a_z}^{\pi/3}} \frac{\mathrm{d}z}{2\I\pi} \int_{\mathcal{C}_{a_w}^{\pi/3}} \frac{\mathrm{d}w}{2\I\pi}
	\frac{z-w}{2z(z+w)} \frac{z+\varpi}{-w+\varpi}\\ \exp\left(z^3/3 + w^3/3 -4\xi\eta_i z^2/\sigma^2 +4\xi\eta_j w^2/\sigma^2 +\xi^2\eta_i^2 z/\sigma +\xi^2\eta_j^2  w/\sigma - x z -  y w \right).
	\end{multline*}
 Thus, we find that after a change of variables 
	\begin{multline*}
	\Ikernel^{\mathrm{exp}, n}_{12}(i,  x ; j, y) \xrightarrow[n\to\infty]{}  \Ikernel^{\rm cross}_{12}(i,  x ; j, y) = \\  \int_{\mathcal{C}_{a_z}^{\pi/3}}\frac{\mathrm{d}z}{2\I\pi} \int_{\mathcal{C}_{a_w}^{\pi/3}} \frac{\mathrm{d}w}{2\I\pi}
	\frac{z + \eta_i  -w + \eta_j }{2(z+\eta_i)(z+\eta_i+w-\eta_j)}
	\frac{z+\varpi+\eta_i}{-w+\varpi+\eta_j}  e^{z^3/3 + w^3/3 - x z -y w},
	\end{multline*}
	where the contours in the last equation are now chosen so that $a_z>-\eta_i$, $a_z+a_w>\eta_j-\eta_i$ and $a_w<\varpi+ \eta_j$.
	When $i<j$ (and consequently $\eta_i<\eta_j$), and for $x,y $ such that  $\sigma x - \xi^2  \eta_i^2>\sigma y - \xi^2\eta_j^2$ (which is equivalent to $ x -   \eta_i^2> y -  \eta_j^2$), we use equation \eqref{eq:R12expcasalphapetit1} for $\Rkernel_{12}$ and find
	$$ \Rkernel_{12}^{\rm exp, n}(i,  x ; j, y) \xrightarrow[n\to\infty]{} -  \int_{\mathcal{C}_{1/4}^{\pi/3}} \frac{\mathrm{d}z}{2\I\pi} 
	\exp\big(   (z-\eta_i)^3/3 - (z-\eta_j)^3/3-x(z-\eta_i) +y(z-\eta_j)\big).
	$$
	One can check that with $ x -   \eta_i^2> y -  \eta_j^2$, the integrand is integrable on the contour $\mathcal{C}_{1/4}^{\pi/3}$. When $x - \eta_i^2< y -  \eta_j^2$ however, we have
	$$ \Rkernel_{12}^{\rm exp, n}(i,  x ; j, y) \xrightarrow[n\to\infty]{} - \int_{\mathcal{C}_{1/4}^{\pi/3}} \frac{\mathrm{d}z}{2\I\pi}
	\exp\big( (z+\eta_j)^3/3 - (z+\eta_i)^3/3  +x(z+\eta_i) -y(z+\eta_j)\big).
	$$
	One can evaluate the integrals above, and we find that in both cases 
	$$ \Rkernel_{12}^{\rm exp, n}(i,  x ; j, y) \xrightarrow[n\to\infty]{} \Rkernel_{12}^{\rm cross}(i,  x ; j, y) = \frac{-\exp\left(\frac{-(\eta_i-\eta_j)^4+ 6(x+y)(\eta_i-\eta_j)^2+3(x-y)^2}{12(\eta_i-\eta_j)}\right)}{\sqrt{4\pi(\eta_j-\eta_i)}}.$$

	As for $\kernel_{22}$, we again decompose the kernel as  $\kernel^{\mathrm{exp}, n}_{22} = \Ikernel^{\mathrm{exp}, n}_{22} + \Rkernel^{\mathrm{exp}, n}_{22}$. For $\Ikernel^{\mathrm{exp}, n}_{22}$, we can write 
		\begin{multline}
		\Ikernel^{\mathrm{exp}, n}_{22}(i,  x ; j, y) = e^{ -\eta_i x -  \eta_j y+ \eta_i^3/3+\eta_j^3/3}  \int_{\mathcal{C}[b_z]}\frac{\mathrm{d}z}{2\I\pi} \int_{\mathcal{C}[b_w]} \frac{\mathrm{d}w}{2\I\pi} \frac{z-w}{z+w}\\ \frac{1}{(-2z+2\sigma^{-1} \varpi n^{-1/3})(-2w+2\sigma^{-1} \varpi n^{-1/3})}
		\exp\Big(n(f(z)+f(w)) \\
		+ n^{2/3}(- \xi\eta_i \log(1-4z^2)- \xi\eta_j \log(1-4w^2))
		\\+ n^{1/3} \xi^2 \eta_i^2 z + n^{1/3} \xi^2 \eta_j^2 w - n^{1/3}\sigma (x z +y w)\Big),
		\label{eq:kernelK22_modifiedcontour_cross}
		\end{multline}
		where  (1) if $\varpi\leqslant0$ then $b_z,b_w>0$, and (2) if $\varpi>0$ then $b_z,b_w\in(0,\sigma^{-1}\varpi)$. 
        Again, by Laplace's method we obtain 
	\begin{multline*}
	\Ikernel^{\mathrm{exp}, n}_{22}(i,  x ; j, y)\xrightarrow[n\to\infty]{}  e^{ -\eta_i x -  \eta_j y+ \eta_i^3/3+\eta_j^3/3} \int_{\mathcal{C}_{b_z}^{\pi/3}}\frac{\mathrm{d}z}{2\I\pi} \int_{\mathcal{C}_{b_w}^{\pi/3}} \frac{\mathrm{d}w}{2\I\pi}
	\frac{z-w}{z+w} \\ \frac{\exp\left(z^3/3 + w^3/3 +4\xi \eta_i z^2/\sigma^2 +4\xi \eta_j w^2/\sigma^2 +\xi^2\eta_i^2 z/\sigma +\xi^2\eta_j^2   w/\sigma- x z -  y w \right)}{(2z-2\varpi)(2w-2\varpi)}  .
	\end{multline*}
 Thus,
	\begin{multline*}
	\Ikernel^{\mathrm{exp}, n}_{22}(i,  x ; j, y) \xrightarrow[n\to\infty]{} \Ikernel^{\rm cross}_{ 22}(i,  x ; j, y) = \\   \int_{\mathcal{C}_{b_z}^{\pi/3}} \frac{\mathrm{d}z}{2\I\pi} \int_{\mathcal{C}_{b_w}^{\pi/3}} \frac{\mathrm{d}w}{2\I\pi}
	\frac{z-\eta_i-w+\eta_j}{4(z-\eta_i+w-\eta_j)}
	\frac{e^{z^3/3 + w^3/3 - x z- y w }}{(z-\varpi-\eta_i)(w-\varpi-\eta_j)}  ,
	\end{multline*}
	where the contours are chosen so that  (1) if $\varpi\leqslant0$ then $b_z>\eta_i$ and $b_w>\eta_j$, and (2) if $\varpi>0$ then $b_z\in(\eta_i,\eta_i+\varpi)$ and $b_w\in(\eta_j,\eta_j+\varpi)$. 

    We next compute the limit of $\Rkernel^{\mathrm{exp}, n}_{22}$ for $x-\eta_i^2>y-\eta_j^2$. When $\varpi>0$, using \eqref{R22 new case1} we have
    \begin{equation*}
	\Rkernel^{\mathrm{exp}, n}_{22}(i,  x ; j, y) \xrightarrow[n\to\infty]{}  
	-\frac{1}{2} \int_{\mathcal{C}_{d_z}^{\pi/3}}\frac{\mathrm{d}z}{2\I\pi}\frac{z\exp\big( (z+\eta_i)^3/3 +(-z+\eta_j)^3/3 -x (z+\eta_i) -y(-z+\eta_j)\big)}{(\varpi+z)(\varpi -z)}.
	\end{equation*}
    When $\varpi<0$, using \eqref{eq:R22expcasalphapetit1} we have
	\begin{multline*}
	\Rkernel^{\mathrm{exp}, n}_{22}(i,  x ; j, y) \xrightarrow[n\to\infty]{}  \\ 
	\frac{-1}{4} \int_{\mathcal{C}_{c_z}^{\pi/3}}\frac{\mathrm{d}z}{2\I\pi} \frac{\exp\big( (z+\eta_i)^3/3 +(\varpi+\eta_j)^3/3 -x (z+\eta_i) -y(\varpi+\eta_j)\big)}{\varpi+z} \\
	+ \frac{1}{4} \int_{\mathcal{C}_{c_z}^{\pi/3}}\frac{\mathrm{d}z}{2\I\pi} \frac{\exp\big( (z+\eta_j)^3/3 +(\varpi+\eta_i)^3/3 -y (z+\eta_j) -x(\varpi+\eta_i)\big)}{\varpi+z} \\
	-\frac{1}{2} \int_{\mathcal{C}_{d_z}^{\pi/3}}\frac{\mathrm{d}z}{2\I\pi}\frac{z\exp\big( (z+\eta_i)^3/3 +(-z+\eta_j)^3/3 -x (z+\eta_i) -y(-z+\eta_j)\big)}{(\varpi+z)(\varpi -z)}\\
	- \frac{1}{4} \exp\big( (-\varpi+\eta_j)^3/3 +(\varpi+\eta_i)^3/3 -y (-\varpi+\eta_j) -x(\varpi+\eta_i)\big)\\
	+ \frac{1}{4}\exp\big( (-\varpi+\eta_i)^3/3 +(\varpi+\eta_j)^3/3 -x (-\varpi+\eta_i) -y(\varpi+\eta_j)\big).
	\end{multline*}
   When $\varpi=0$, using \eqref{R22 new case3} we have
   \begin{multline*}
	\Rkernel^{\mathrm{exp}, n}_{22}(i,  x ; j, y) \xrightarrow[n\to\infty]{}   
	\frac{-1}{4} \int_{\mathcal{C}_{c_z}^{\pi/3}}\frac{\mathrm{d}z}{2\I\pi z} \exp\big( (z+\eta_i)^3/3 + \eta_j^3/3 -x (z+\eta_i) -y \eta_j\big) \\
	+ \frac{1}{4} \int_{\mathcal{C}_{c_z}^{\pi/3}}\frac{\mathrm{d}z}{2\I\pi z} \exp\big( (z+\eta_j)^3/3 + \eta_i^3/3 -y (z+\eta_j) -x \eta_i\big) \\
	+\frac{1}{2} \int_{\mathcal{C}_{d_z}^{\pi/3}}\frac{\mathrm{d}z}{2\I\pi z} \exp\big( (z+\eta_i)^3/3 +(-z+\eta_j)^3/3 -x (z+\eta_i) -y(-z+\eta_j)\big) \\
	- \frac{1}{4} \exp\big( \eta_i^3/3 +\eta_j^3/3 -\eta_ix-\eta_jy\big).
	\end{multline*} 	
    The above contours are chosen so that $c_z>-\varpi$ and (1) if $\varpi\neq0$ then $d_z$ is between $-\varpi$ and $\varpi$, and (2) if $\varpi=0$ then $d_z>0$. 
    By deforming the contours $\mathcal{C}_{c_z}^{\pi/3}$ past $-\varpi$, the above equations match with $\Rkernel_{22}^{\rm cross}$ given by \eqref{cross R22}, in which  $c_z<-\varpi$.  When  $ x -   \eta_i^2< y -  \eta_j^2$, $ \Rkernel_{22}^{\rm exp, n}$ is determined by antisymmetry.

At this point, we have shown that when $\alpha=1/2$ and for any set of points $ \lbrace  i_r, x_{i_r} ; j_s, x_{j_s}  \rbrace_{1\leqslant r,s\leqslant k} \in \lbrace 1, \dots, k\rbrace \times \R$, 
\begin{equation*}
 \Pf\Big(\kernel^{\rm exp, n}\big(i_r, x_{i_r} ; j_s, x_{j_s} \big) \Big)_{r,s=1}^k \xrightarrow[q\to 1]{} \Pf\Big({\kernel}^{\rm cross}\big(i_r, x_{i_r} ; j_s, x_{j_s}\big) \Big)_{r,s=1}^k.
\end{equation*}
In order to conclude that the Fredholm Pfaffian likewise has the desired limit, one needs a control on the entries of the kernel $\kernel^{\rm{exp}, n}$, in order to apply dominated convergence.
\begin{lemma}
Let $a\in \R$ and $0\geqslant\eta_1< \dots <\eta_k$ be fixed. There exist positive constants $C, c, m$ for $n>m$ and $x, y>a$,
	\begin{align*}
	\Big\vert  \kernel^{\rm exp, n}_{11}(i, x;j,y)\Big\vert &< C \exp\big(-c x - c y \big),\\
	\Big\vert  \kernel^{\rm exp, n}_{12}(i, x;j,y)\Big\vert &< C \exp\big(-c x \big),\\
	\Big\vert  \kernel^{\rm exp, n}_{22}(i, x;j,y)\Big\vert &< C.
	\end{align*}
	\label{lem:expobound}
\end{lemma}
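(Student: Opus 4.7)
The approach is to adapt the steep-descent estimates underlying the pointwise convergence established earlier in this section so that they become uniform in $x,y>a$ and in large $n$. All the decay in $x$ and $y$ must ultimately come from the factor $e^{-n^{1/3}\sigma(xz+yw)}$ in \eqref{eq:kernelK11_modifiedcontour_cross}, \eqref{eq:kernelK12_modifiedcontour_cross}, \eqref{eq:kernelK22_modifiedcontour_cross}, once the contours are chosen so that their real parts have the appropriate sign.

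For $\kernel^{\rm exp,n}_{11}$ I would deform both contours in \eqref{eq:kernelK11_modifiedcontour_cross} to $\mathcal{C}[r]$ for a fixed $r$ chosen large enough that $r/\sigma>2\max_i\eta_i$. After the rescaling $\tilde z=\sigma n^{1/3}z$, the contour becomes a fixed arc-plus-rays contour along which $\Real(\tilde z)\ge r/2$, so the exponential factor is bounded by $e^{-xr/2}e^{-yr/2}$. The factor $e^{nf(z)}$ is controlled by the steep-descent property of $\Real[f]$ on $\mathcal{C}[r]$ (the analogue of \cite[Lemma 5.9]{baik2017pfaffian}): along the semi-infinite rays $\Real[f]$ is monotone decreasing, while on the small arc the Taylor expansion \eqref{eq:Taylorapprox1} gives a uniform-in-$n$ bound after rescaling. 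The rational factors $(z-w)(4zw(z+w))^{-1}(2z+2\sigma^{-1}\varpi n^{-1/3})(2w+2\sigma^{-1}\varpi n^{-1/3})$ collapse to $\mathcal{O}(1)$ on the rescaled contour. Incorporating the prefactor $e^{\eta_i x+\eta_j y}$ from the definition of $\kernel^{\rm exp,n}_{11}$ then yields the bound $Ce^{-cx-cy}$ with $c=r/2-\max_i\eta_i>0$.

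For $\kernel^{\rm exp,n}_{12}=\Ikernel^{\rm exp,n}_{12}+\Rkernel^{\rm exp,n}_{12}$ the same procedure applies to $\Ikernel^{\rm exp,n}_{12}$; however the constraint $a_w<\varpi$ in \eqref{eq:I12exp} means that when $\varpi<0$ we can only ensure boundedness (rather than positivity) of $\Real(\tilde w)$, which is exactly why the lemma asserts decay in $x$ but not in $y$. The piece $\Rkernel^{\rm exp,n}_{12}$, being a single contour integral on $\mathcal{C}_{1/4}^{\pi/3}$, is estimated in the same steep-descent fashion and produces $Ce^{-c|x-y|}$ which absorbs into $Ce^{-cx}$ once combined with the prefactor $e^{\eta_i x-\eta_j y}$. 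For $\kernel^{\rm exp,n}_{22}$ only a uniform constant bound is needed: $\Ikernel^{\rm exp,n}_{22}$ from \eqref{eq:kernelK22_modifiedcontour_cross} is estimated as for $\Ikernel^{\rm exp,n}_{11}$ on $\mathcal{C}[r]$ with $r$ chosen large enough that all the poles $(1-2\alpha)/2=-\sigma^{-1}\varpi n^{-1/3}$ and $0$ sit on the correct side and the denominators $(-2z+2\sigma^{-1}\varpi n^{-1/3})^{-1}(-2w+2\sigma^{-1}\varpi n^{-1/3})^{-1}$ are uniformly bounded. The residue part $\Rkernel^{\rm exp,n}_{22}$ from \eqref{eq:R22expcasalphapetit1} consists of three further integrals handled identically, plus two explicit boundary terms of the form $\pm\tfrac14 e^{(x-y)(1-2\alpha)/2}(2\alpha)^{m_\bullet}(2-2\alpha)^{m_\bullet}/((2-2\alpha)^{n_\bullet}(2\alpha)^{n_\bullet})$; after substituting $\alpha=\tfrac12+\sigma^{-1}\varpi n^{-1/3}$ and $X_i=4n+n^{1/3}(\sigma x-\xi^2\eta_i^2)$ these are easily seen to remain bounded uniformly on $\{x,y>a\}$ and in $n$.

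The main obstacle is establishing the steep-descent inequality for $\Real[f]$ along the deformed contour $\mathcal{C}[r]$ for arbitrary fixed $r>0$, the analogue of \cite[Lemma 5.9]{baik2017pfaffian}, together with the bookkeeping needed to show that the $n$-dependent non-exponential factors do not accidentally produce subexponential growth in $x,y$ that would spoil the bounds. Once the steep-descent property is in hand, the remainder is a routine combination of the exponential decay from the rescaled contour with the $\eta$-dependent prefactors and the $n$-uniform bounds on the rational factors, exactly as in the pointwise convergence argument above.
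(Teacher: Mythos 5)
Your proposal follows essentially the same route as the paper, which likewise decomposes $\kernel^{\rm exp,n}$ into $\Ikernel^{\rm exp,n}+\Rkernel^{\rm exp,n}$ and bounds each piece by the steep-descent/contour-deformation estimates of Lemmas 5.11 and 6.4 of \cite{baik2017pfaffian}, combining them via the sign of $\eta_i-\eta_j$. The only point worth tightening is your remark that ``boundedness of $\Real(\tilde w)$'' suffices for $\Ikernel_{12}$ when $\varpi<0$: a negative lower bound on $\Real(\tilde w)$ along the small arc yields $e^{\delta y}$ growth rather than $y$-uniform boundedness, so one should either note that the residual growth rate $\delta$ is dominated by the decay rate in $x$ (which is all that Lemma \ref{lem:hadamard} requires) or absorb it using the $e^{-\eta_j y}$ prefactor.
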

\begin{proof}
	The proof is very similar to that of Lemmas 5.11 and 6.4 in \cite{baik2017pfaffian}. Indeed, using the same approach as in the proof of these lemmas, we obtain that 
		\begin{align*}
		\Big\vert  \Ikernel^{\rm exp, n}_{11}(i, x;j,y)\Big\vert &< C \exp\big(-c x - c y \big),\\
		\Big\vert  \Ikernel^{\rm exp, n}_{12}(i, x;j,y)\Big\vert &< C \exp\big(-c x \big),\\
		\Big\vert  \Ikernel^{\rm exp, n}_{22}(i, x;j,y)\Big\vert &< C\exp\big(-c x - c y \big),
		\end{align*}
	and 
		\begin{align*}
		\Big\vert  \Rkernel^{\rm exp, n}_{11}(i, x;j,y)\Big\vert &=0,\\
		\Big\vert  \Rkernel^{\rm exp, n}_{12}(i, x;j,y)\Big\vert &\leqslant C \mathds{1}_{i<j}  \exp\big( (x+y)(\eta_i-\eta_j)\big),\\
		\Big\vert  \Rkernel^{\rm exp, n}_{22}(i, x;j,y)\Big\vert &< C.
		\end{align*}
	Recall that when $i<j$, $\eta_i-\eta_j<0$, so that the bounds on $\Ikernel^{\rm \exp, n}$ and $\Rkernel^{\rm \exp, n}$ combine together to the statement of Lemma \ref{lem:expobound}. 
\end{proof}
The bounds from Lemma \ref{lem:expobound} are such that the hypotheses in Lemma \ref{lem:hadamard} are satisfied. We conclude, applying dominated convergence in the Pfaffian series expansion,  that
	$$ \lim_{n\to\infty} \PP\left(\bigcap_{i=1}^k \left\lbrace H_n(\eta_i) < x_i  \right\rbrace\right)  = \Pf\big( J- \kernel^{\rm  cross}\big)_{\mathbb{L}^2(\mathbb{D}_k(x_1, \dots, x_k))}.$$

\subsection{Proof of Theorem \ref{theo:SU}}

	The proof is very similar as that  of Theorem \ref{theo:crossfluctuations}. We use a similar  rescaling of the kernel: we define the rescaled kernel 
		\begin{multline*}
		\kernel^{\mathrm{exp}, n}(i,  x_i ; j, x_j):= \\
		{\footnotesize \begin{pmatrix}
		\varpi^{-2}\sigma^2 n^{2/3}e^{ \eta_i x_i +  \eta_j x_j - \eta_i^3/3-\eta_j^3/3}\kernel_{11}^{\rm exp}\big(i,X_i; j, X_j\big) & \ \ \varpi^{-1}\sigma n^{1/3}e^{ \eta_i x_i -  \eta_j x_j - \eta_i^3/3+\eta_j^3/3} \kernel_{12}^{\rm exp}\big(i,X_i; j, X_j\big)\\
		\varpi^{-1}\sigma n^{1/3}e^{ -\eta_i x_i +  \eta_j x_j+ \eta_i^3/3-\eta_j^3/3} \kernel_{21}^{\rm exp}\big(i,X_i; j, X_j\big) &\ \ \varpi^{2} e^{ -\eta_i x_i -  \eta_j x_j+ \eta_i^3/3+\eta_j^3/3}\kernel_{22}^{\rm exp}\big(i,X_i; j, X_j\big)
		\end{pmatrix}},\end{multline*}
	 Then, we decompose the kernel as  $\kernel^{\mathrm{exp}, n}(i,  x_i ; j, x_j)= \Ikernel^{\mathrm{exp}, n}(i,  x_i ; j, x_j)+ \break \Rkernel^{\mathrm{exp}, n}(i,  x_i ; j, x_j)$   using the formulas (and choice of contours) of Section \ref{sec:kernelexp} in the case $\alpha>1/2$. Thus, the formulas are slightly simpler than in the proof of  Theorem \ref{theo:crossfluctuations}. To show that the kernel $\kernel^{\mathrm{exp}, n}$ converges pointwise to $\kernel^{\mathrm{SU}}$ ,  we follow the same steps as in the proof of Theorem \ref{theo:crossfluctuations} as if $\varpi=+\infty$ and contours $\mathcal{C}[a_z], \mathcal{C}[a_w], \mathcal{C}[b_z], \mathcal{C}[b_w]$ are chosen to be consistent with the constraints on contours in the case $\alpha>1/2$ of Section \ref{sec:kernelexp}. Finally, the kernel satisfies the same uniform bounds as in Lemma \ref{lem:expobound}, so that we conclude the proof by dominated convergence as above. 

\bibliographystyle{amsalpha}
\bibliography{ftasep.bib}

\end{document}